\DeclareSymbolFont{cyrletters}{OT2}{wncyr}{m}{n}
\DeclareMathSymbol{\Sha}{\mathalpha}{cyrletters}{"58}
\title{Alternating multizeta values in positive characteristic}
\author{Ryotaro Harada}
\address{Graduate School of Mathematics, Nagoya University, 
Furo-cho, Chikusa-ku, Nagoya 464-8602 Japan }
\email{m15039r@math.nagoya-u.ac.jp}
\date{September 9, 2019.}
\newtheorem{thm}{Theorem}[section]
\newtheorem{lem}[thm]{Lemma}
\newtheorem{cor}[thm]{Corollary}
\newtheorem{prop}[thm]{Proposition}
\theoremstyle{remark}
\theoremstyle{definition}
\newtheorem{defn}[thm]{Definition}
\newtheorem{rem}[thm]{Remark}
\newtheorem{nota}[thm]{Notation}     
\newtheorem{eg}[thm]{Example}       
\newtheorem{conj}[thm]{Conjecture}
\begin{document}
\bibliographystyle{amsalpha+}

\begin{abstract}
We introduce alternating multizeta values in positive characteristic
which are generalizations of Thakur multizeta values.
We establish their fundamental properties including non-vanishing,
sum-shuffle relations, period interpretation and linear independence
which is a direct  sum result for these values.
\end{abstract}


\maketitle
\tableofcontents
\setcounter{section}{-1}
\section{Introduction}
In this paper, we introduce the {\it alternating multizeta vaues in positive characteristic} (AMZVs in short) which are defined as the following infinite sums. For $\mathfrak{s}=(s_1, \ldots, s_r)\in\mathbb{N}^r$ and ${\boldsymbol \epsilon}=(\epsilon_1, \ldots, \epsilon_r) \in{(\mathbb{F}_q^{\times})}^r$,
\begin{align}\label{amz1}
    \zeta_A(\mathfrak{s};\boldsymbol{\epsilon})=\sum_{\substack{a_1, \ldots, a_r\in A_+\\ \deg a_1>\cdots>\deg a_r\geq 0}}\frac{\epsilon_1^{\deg a_1} \cdots \epsilon_r^{\deg a_r}}{a_1^{s_1} \cdots a_r^{s_r}}\in k_{\infty}.
\end{align}
For the definitions of $\mathbb{F}_q$, $A$, $A_+$, $k$, $k_{\infty}$ and $\overline{k}$, see $\S\ref{No}$.
We call ${\rm wt}(\mathfrak{s}):=\sum_{i=1}^{r}s_i$ the weight and ${\rm dep}(\mathfrak{s}):=r$ the depth of the presentation of $\zeta_A(\mathfrak{s};\boldsymbol{\epsilon})$. We also study their properties listed in below.
\begin{itemize}
\item[(A).] Non-vanishing (Theorem\ref{nonvan}) 
\item[(B).] Sum-shuffle relations (Theorem \ref{shamz})
\item[(C).] Period interpretation (Theorem \ref{perint})
\item[(D).] Linear independence (Theorem \ref{linindamz})
\end{itemize}
We also note that $\zeta_A(\mathfrak{s};\boldsymbol{\epsilon})$ is generalization of Thakur multizeta values, positive characteristic analogue of multizeta values. It is said that multizeta values originated from the research in Euler's work \cite{Eu} in 1776. He introduced them (originally, $r=2$ case) as the following infinite series:
\[
	\zeta(\mathfrak{s}):=\sum_{n_1>\cdots >n_r}\frac{1}{n_1^{s_1}\cdots n_r^{s_r}}\in\mathbb{R}
\]
where $\mathfrak{s}:=(s_1, \ldots, s_r)\in\mathbb{N}^r$ and $s_1>1$. Moreover, $r$ is called depth and $\sum_{i=1}^rs_i$ is called weight of the presentation of $\zeta(s_1, \ldots, s_r)$.  In the last quarter century, it got known that they have connection to number theory \cite{DG, Z}, knot theory \cite{LM} and quantum field theory \cite{BK} and so on. It is also known that there are linear/algebraic relations between multizeta values. In particular, they satisfy two types algebraic relations, namely the integral-shuffle and sum-shuffle relations (also known as harmonic relation, stuffle relation) each obtained from integral expression and series expression respectively. For the lower depth case, they are described as following:
\begin{itemize}
\item[] Integral-shuffle relation: 
\begin{align*} 
  \zeta(s_1)\zeta(s_2)=\sum^{s_2-1}_{m=0}\binom{s_1+m-1}{k}\zeta(s_1+m, s_2-m)+\sum^{s_1-1}_{n=0}\binom{s_2+n-1}{n}\zeta(s_2+n, s_1-n),
\end{align*}  
\item[] Sum-shuffle relation: 
\begin{align*}  
  \zeta(s_1)\zeta(s_2)=\zeta(s_1,s_2)+\zeta(s_2,s_1)+\zeta(s_1+s_2).
\end{align*}
\end{itemize}
From these algebraic relations, we may consider an algebra generated by 1 and all multizeta values.
On the other hand, there still exist some open problems. 
For example, we have the following conjecture (folklore):
\begin{conj}\label{gconj}
Let $\overline{\mathfrak{Z}}$ be the $\overline{\mathbb{Q}}$-algebra generated by multizeta values and $\overline{\mathfrak{Z}}_w$ be the $\overline{\mathbb{Q}}$-vector space spanned by the weight $w$ multizeta values for $w\geq 2$. Then one has the following: 
\begin{itemize}
\item[(i)] $\overline{\mathfrak{Z}}$ forms an weight-graded algebra, that is, $\overline{\mathfrak{Z}}=\overline{\mathbb{Q}}\bigoplus_{w\geq 2}\overline{\mathfrak{Z}}_w$;
\item[(ii)] $\overline{\mathfrak{Z}}$ is defined over $\mathbb{Q}$, i.e. the canonical map $\overline{\mathbb{Q}}\otimes_{\mathbb{Q}}\mathfrak{Z}\rightarrow\overline{\mathfrak{Z}}$ is bijective.
\end{itemize}
\end{conj}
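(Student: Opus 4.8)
The plan is to route everything through \emph{motivic} multizeta values. Following Goncharov, Terasoma and Deligne--Goncharov, one attaches to the Tannakian category $\mathrm{MT}(\mathbb{Z})$ of mixed Tate motives over $\mathbb{Z}$ a graded Hopf $\mathbb{Q}$-algebra $\mathcal{H}=\bigoplus_{w\ge 0}\mathcal{H}_w$, motivic lifts $\zeta^{\mathfrak{m}}(\mathfrak{s})\in\mathcal{H}_{\mathrm{wt}(\mathfrak{s})}$ of the real numbers $\zeta(\mathfrak{s})$, and a ring homomorphism $\mathrm{per}\colon\mathcal{H}\to\mathbb{R}$ with $\mathrm{per}\bigl(\zeta^{\mathfrak{m}}(\mathfrak{s})\bigr)=\zeta(\mathfrak{s})$ whose image is the $\mathbb{Q}$-algebra $\mathfrak{Z}$. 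Since $\mathcal{H}$ is visibly weight-graded and every weight-$w$ motivic MZV lies in $\mathcal{H}_w$, the algebra generated by $1$ and the $\zeta^{\mathfrak{m}}(\mathfrak{s})$ is automatically graded; consequently both (i) and (ii) would follow from the injectivity of the $\overline{\mathbb{Q}}$-linear extension $\mathrm{per}\otimes\overline{\mathbb{Q}}\colon\overline{\mathbb{Q}}\otimes_{\mathbb{Q}}\mathcal{H}\to\mathbb{R}$ --- a form of Grothendieck's period conjecture restricted to $\mathrm{MT}(\mathbb{Z})$ --- and this is the statement I would try to establish. Indeed, if that map is injective then $\overline{\mathfrak{Z}}\cong\overline{\mathbb{Q}}\otimes_{\mathbb{Q}}\mathcal{H}$ inherits the grading of $\mathcal{H}$, which is (i), while its restriction to $\mathcal{H}$ identifies $\mathfrak{Z}$ with $\mathcal{H}$ and hence $\overline{\mathbb{Q}}\otimes_{\mathbb{Q}}\mathfrak{Z}$ with $\overline{\mathfrak{Z}}$, which is (ii).

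Before attacking injectivity I would record the unconditional half of the picture, since it is what makes the reduction useful. By Zagier's conjecture one expects $\dim_{\mathbb{Q}}\mathfrak{Z}_w=d_w$ where $d_0=1$, $d_1=0$, $d_2=1$ and $d_w=d_{w-2}+d_{w-3}$; here the upper bound $\dim_{\mathbb{Q}}\mathfrak{Z}_w\le d_w$ is a theorem of Deligne--Goncharov and Terasoma, obtained from the structure of $\mathrm{MT}(\mathbb{Z})$, and Brown's theorem gives $\dim_{\mathbb{Q}}\mathcal{H}_w=d_w$ together with the fact that the $\zeta^{\mathfrak{m}}(s_1,\dots,s_r)$ with all $s_i\in\{2,3\}$ span $\mathcal{H}$. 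So all the relations predicted by the motivic formalism, and one side of the dimension count, are already available over $\mathbb{Q}$; what is missing is exactly the matching lower bound $\dim_{\mathbb{Q}}\mathfrak{Z}_w\ge d_w$, equivalently the injectivity above.

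The hard part --- and the reason this remains a conjecture rather than a theorem --- is precisely that injectivity of $\mathrm{per}$. It is a special case of Grothendieck's period conjecture for the motives involved, and it already forces, for instance, $1,\zeta(3),\zeta(5),\zeta(7),\dots$ to be $\overline{\mathbb{Q}}$-linearly independent (they would sit in distinct graded pieces) --- a conclusion far outside the reach of present transcendence theory, where even the irrationality of $\zeta(5)$ is open. So I would not expect to carry out this final step; the realistic deliverable is the reduction of (i) and (ii) to the injectivity of $\mathrm{per}\otimes\overline{\mathbb{Q}}$, packaged with the Deligne--Goncharov--Terasoma upper bound and Brown's generation theorem, which is the current state of the art. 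I note finally that the function-field analogues of (i) and (ii), of the kind relevant to the present paper, \emph{are} known, because the Anderson--Brownawell--Papanikolas criterion and its refinements supply in that setting the transcendence input that is unavailable over $\overline{\mathbb{Q}}$.
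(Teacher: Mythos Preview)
The statement you were asked to prove is labeled \texttt{conj} in the paper --- it is Conjecture~\ref{gconj}, presented explicitly as folklore and left open. The paper does \emph{not} prove it; there is no ``paper's own proof'' to compare against. The paper's actual contribution is the positive-characteristic analogue (Theorem~\ref{gonamz}), proved via the ABP criterion, which you yourself note in your final sentence.

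Your write-up is an accurate summary of the current state of affairs for the classical conjecture: the reduction to injectivity of the period map on $\mathrm{MT}(\mathbb{Z})$, the Deligne--Goncharov/Terasoma upper bound, and Brown's generation theorem are all correct and well-placed, and you are right that the remaining injectivity step is a case of Grothendieck's period conjecture and is wide open (already implying, e.g., $\overline{\mathbb{Q}}$-linear independence of odd zeta values). So there is no gap in your reasoning, but there is also no proof --- you have correctly identified the obstruction and declined to claim more than is known. Since the paper treats this as a conjecture and not a theorem, your proposal is appropriate as a contextual discussion, not as a proof to be compared.
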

We note that this conjecture is a stronger version of Goncharov conjecture in \cite{Go}. 
Because in the original conjecture, $\mathbb{Q}$ is a base field instead of $\overline{\mathbb{Q}}$ and thus it follows from Conjecture \ref{gconj}.
Moreover, 
several variants of multizeta values were also invented such as \cite{KZ}, \cite{Fu} and \cite{Br12} etc.
There exist the alternating multizeta values\footnote{They are also known as Euler sums \cite{Br0} or level 2 colored multizeta values \cite{BJOP}.} which are certain generalizations of multizeta values and defined as following series:
\[
    \zeta(\mathfrak{s}; \boldsymbol{\epsilon}):=\sum_{n_1>\cdots>n_r>0}\frac{\epsilon_1^{n_1} \cdots \epsilon_r^{n_r}}{n_1^{s_1} \cdots n_r^{s_r}}\in\mathbb{R}
\]
where $\mathfrak{s}:=(s_1, \ldots, s_r)\in\mathbb{N}^r$, $\boldsymbol{\epsilon}:=(\epsilon_1, \ldots, \epsilon_r) \in\{\pm 1\}^r$ and $(s_1, \epsilon_1)\neq(1, 1)$.
In these two decades, they were also studied by Broadhurst, Deligne-Goncharov, Glanois, Hoffman, Kaneko-Tsumura and Zhao. Due to their contributions, now it is known that alternating multizeta values have connection to study of knots \cite{Br0}, Feynman diagrams \cite{Br}, modular forms \cite{KT}, mixed Tate motives over $\mathbb{Z}[1/2]$ \cite{DG}. It is also shown that alternating multizeta values enjoy integral-shuffle and sum-shuffle relation. 
There also exist finite variant \cite{BTT}, $p$-adic variant \cite{Un}, \cite{Jaro}, and motivic variant \cite{Gl}. 

In characteristic $p$ case, an analogue of multizeta values were invented by Thakur \cite{Th}.
First we recall the power sums. For $s \in \mathbb{Z}$ and $d\in \mathbb{Z}_{\geq0}$, power sums are defined by
\[
  S_d(s):=\sum_{ a\in A_{d+}}\frac{1}{a^s}\in k.
\] 
For $\mathfrak{s}=(s_1, \ldots, s_r)\in\mathbb{N}^r$ and $d\in\mathbb{Z}_{\geq0}$, we define 
\[
  S_d(\mathfrak{s}) := S_d(s_1)\sum_{d>d_2>\cdots >d_r\geq0}S_{d_2}(s_2)\cdots S_{d_r}(s_r)\in k.
\]
For $s \in \mathbb{Z}_{\geq 0}$, the {\it Carlitz zeta values}  are defined by
\[
  \zeta_A(s):=\sum_{a\in A_{+}}\frac{1}{a^s}\in k_{\infty}.
\]
Thakur generalized this definition to that of {\it multizeta values in positive characteristic} (MZVs in short). For $\mathfrak{s}=(s_1, \ldots, s_r)\in\mathbb{N}^r$, 
\begin{align*}
  \zeta_A(\mathfrak{s}):= \sum_{d_1>\cdots >d_r\geq0}S_{d_1}(s_1)\cdots S_{d_r}(s_r)
                                   =\sum_{\substack{a_1, \ldots, a_r\in A_+\\ \deg a_1>\cdots>\deg a_r\geq 0}}\frac{1}{a^{s_1}_1\cdots a^{s_r}_r}\in k_{\infty}.
\end{align*}

He and Anderson developed the following properties for MZVs:    
\begin{itemize}
\item[(a).] Non-vanishing (\cite[Theorem 4]{T}) 
\item[(b).] Sum-shuffle relations (\cite[Theorem 3]{T1})
\item[(c).] Period interpretation (\cite[Theorem 1]{AT1})
\end{itemize}
About (b), Thakur showed that the product of two MZVs can be expressed as an $\mathbb{F}_p$-linear combination of some MZVs of the same weight and we follow Thakur's terminology to call these sum-shuffle relations. Thus the MZVs form an $\mathbb{F}_p$-algebra.
By lacking of the integral expression, we do not know the existence of the integral-shuffle relation in characteristic $p$. 
By using the transcendence theory developed by Yu \cite{JY}, Anderson-Brownawell-Papanikolas \cite{ABP} and Papanikolas \cite{Pa} together with period interpretations of MZVs by Anderson-Thakur \cite{AT1}, some advances on MZVs are established by Chang-Yu \cite{CY07} (for Carlitz zeta values), Chang \cite{C1, C2}, Chang-Papanikolas-Yu \cite{CPY} and Mishiba \cite{Y1, Y2}. 
For example, in \cite{C1}, a characteristic $p$ analogue of Conjecture \ref{gconj} was proved as the following property:
\begin{itemize}
\item[(d).] Linear independence (\cite[Theorem 2.2.1]{C1})
\end{itemize}

We emphasize that AMZVs have important and fundamental properties which generalize those of MZVs.
Indeed, we show them consisting of non-vanishing (Theorem \ref{nonvan}), sum-shuffle relations (Theorem \ref{shamz}) and period interpretation (Theorem \ref{perint}) as an alternating analogue of (a), (b), and (c). 
By applying those properties, we will show the alternating analogue of (d) (Corollary \ref{gonamz}) in \S\ref{seclinind} and thereby obtain the transcendence of each alternating multizeta value. These result give us the generalization of main theorems in \cite{C1, T, T1}. 

From these results, while we find differences in observing alternating analogue of (b) (cf. \S\ref{subsecsh}), (c) (cf. \S\ref{secperint}) and Corollary \ref{gonamz} (cf. \S\ref{seclinind}) by units $\gamma\in\overline{\mathbb{F}}_q^{\times}$ and $\epsilon\in\mathbb{F}_q^{\times}$, their basic properties appeared in this paper are similar to those of MZVs. More precisely, for the property (A), it is immediately obtained by an inequality property of the absolute values of power sums proved by Thakur \cite{T}. For the property (B), we use Chen's formula \cite{HJC} and approach the higher depth case by induction method invented by Thakur \cite{T1}. For the property (C), inspired by Anderson-Thakur polynomials (\cite{AT}) that can interpolate power sums, we use those polynomials to create suitable power series that their specialization are AMZVs and then we use these series to create suitable pre-$t$-motives to establish the period interpretation of (C). This result is also indicated from Chang's advice on depth 1 AMZVs. For the linear independence result of AMZVs, we use Chang's method \cite{C1} by applying Anderson-Brownawwell-Papanikolas criterion \cite{ABP} to establish the alternating analogue of MZ property for AMZVs. By the linear independence result, we have (D), that is, an alternating analogue of (d) (for the detail, see Theorem \ref{gonamz}):
\begin{itemize}
\item[(i)] $\overline{\mathcal{AZ}}$ forms an weight-graded algebra, that is, $\overline{\mathcal{AZ}}=\overline{k}\bigoplus_{w\in\mathbb{N}}\overline{\mathcal{AZ}}_w$,
\item[(ii)] $\overline{\mathcal{AZ}}$ is defined over $k$, that is, we have the canonical map $\overline{k}\otimes_{k}\mathcal{AZ}\rightarrow\overline{\mathcal{AZ}}$ which is bijective.  
\end{itemize}

\section{Notations and Definitions}
\label{No}
We put the following notations.

\begin{itemize}
\setlength{\leftskip}{1.0cm}
\item[$q$] \quad  a power of a prime number $p$.  
\item[$\mathbb{F}_q$] \quad a finite field with $q$ elements.
\item[$\theta$, $t$] \quad independent variables.
\item[$A$] \quad the polynomial ring $\mathbb{F}_q[\theta]$.
\item[$A_{+}$] \quad the set of monic polynomials in $A$.
\item[$A_{d+}$] \quad the set of elements in $A_{+}$ of degree $d$. 
\item[$k$] \quad the rational function field $\mathbb{F}_q(\theta)$.
\item[$k_{\infty}$] \quad the completion of $k$ at the infinite place $\infty$, $\mathbb{F}_q((\frac{1}{\theta}))$.
\item[$\overline{k_{\infty}}$] \quad a fixed algebraic closure of $k_{\infty}$.
\item[$\mathbb{C}_{\infty}$] \quad the completion of $\overline{k_{\infty}}$ at the infinite place $\infty$.
\item[$\overline{k}$] \quad a fixed algebraic closure of $k$ in $\mathbb{C}_{\infty}$.
\item[$|\cdot|_{\infty}$]\quad a fixed absolute value for the completed field $\mathbb{C}_{\infty}$ so that $|\theta|_{\infty}=q$.
\item[$\mathbb{T}$]\quad the Tate algebra over $\mathbb{C}_{\infty}$, the subring of $\mathbb{C}_{\infty}[[t]]$ consisting of\\
\quad power series convergent on the closed unit disc $|t|_{\infty}\leq 1$. 
\item[$D_i$]\quad $\prod^{i-1}_{j=0}(\theta^{q^i}-\theta^{q^j})\in A_{+}$ where $D_0:=1$.
\item[$\Gamma_{n+1}$]\quad the Carlitz gamma, $\prod_{i}D_i^{n_i}$ ($n = \sum_{i}n_iq^i\in\mathbb{Z}_{\geq0} \ (0\leq n_i\leq q-1)$).  
\end{itemize}

In the function field case, $A$ is an analogue of the integer ring $\mathbb{Z}$. Thus we may consider  $A^{\times}=\mathbb{F}_q^{\times}$ as an analogue of $\mathbb{Z}^{\times}=\{\pm 1\}$. 

We define alternating power sums and alternating multizeta values in positive characteristic along the construction of MZVs by Thakur.
For $s\in\mathbb{N}$, $\epsilon \in\mathbb{F}_q^{\times}$ and $d\in\mathbb{Z}_{\geq 0}$, we define the {\it alternating power sums} by
\[
    S_d(s;\epsilon):=\epsilon^{d}S_d(s)=\sum_{a\in A_{d+}}\frac{\epsilon^d}{a^s}\in k.
\]
The above alternating power sums are extended inductively as follows.

For $\mathfrak{s}=(s_1, \ldots, s_r)\in\mathbb{N}^r$, ${\boldsymbol \epsilon}=(\epsilon_1, \ldots, \epsilon_r) \in{(\mathbb{F}_q^{\times})}^r$ and $d\in\mathbb{Z}_{\geq 0}$, we define 
\[
	S_{<d}({\mathfrak s};{\boldsymbol \epsilon}):=\sum_{d>d_1>\cdots >d_r\geq 0}S_{d_1}(s_1;\epsilon_1)\cdots S_{d_r}(s_r;\epsilon_r)\in k
\]
and
\begin{align}\label{extpowsum}
    S_d(\mathfrak{s};\boldsymbol{\epsilon})&:=S_d(s_1;\epsilon_1)S_{<d}(s_2, \ldots, s_r; \epsilon_2, \ldots, \epsilon_r)\\
    &:=S_d(s_1;\epsilon_1)\sum_{d>d_2>\cdots>d_r\geq 0}S_{d_2}(s_2;\epsilon_2)\cdots S_{d_r}(s_r;\epsilon_r)\in k.\nonumber
\end{align}
When $r-1>d$, $S_d(\mathfrak{s};\boldsymbol{\epsilon})=0$ since it is empty sum.
By using these alternating power sums, AMZVs (cf. \eqref{amz1} ) are interpreted as follows. 
\begin{align}\label{chpes}
    \zeta_A(\mathfrak{s};\boldsymbol{\epsilon})&=\sum_{d\geq0}S_d(\mathfrak{s};\boldsymbol{\epsilon})\in k_{\infty}.
\end{align}
\begin{rem}
We remark that $\zeta_A(\mathfrak{s};\boldsymbol{\epsilon})$ specializes to $\zeta_A(\mathfrak{s})$ when ${\boldsymbol \epsilon}=(\epsilon_1, \ldots, \epsilon_r)=(1, \ldots, 1)$.
\end{rem}
\begin{rem}
In writing this paper, the author got to know that there exist the colored variant of MZVs by communicating with Thakur.
They were defined by his students Qibin Shen and Shuhui Shi as follows. 
For $\mathfrak{s}=(s_1, \ldots, s_r)\in\mathbb{N}^r$, and $n\in \mathbb{N}$, let ${\boldsymbol \xi}=(\xi_1, \ldots, \xi_r)\in(\overline{\mathbb{F}_q}^{\times})^r$ so that each $\xi_i$ is $n$-th root of unity. Then level $n$ colored MZVs are the following series:
\begin{align}\label{cmzv}
    \zeta_A(\mathfrak{s};{\boldsymbol \xi})&:=\sum_{\substack{a_1, a_2, \ldots, a_r\in A_+\\ \deg a_1>\deg a_2>\cdots>\deg a_r\geq 0}}\frac{\xi_1^{\deg a_1}\xi_2^{\deg a_2} \cdots \xi_r^{\deg a_r}}{a_1^{s_1}a_2^{s_2} \cdots a_r^{s_r}}.
\end{align}
This $\zeta_A(\mathfrak{s};{\boldsymbol \xi})$ includes AMZVs as level $q-1$ case and we remark that the $q-1$ case was defined by the author and Shen-Shi independently. 
\end{rem}
In this paper, we write $n$-fold Frobenius twisting as follows.
\begin{align*}
	\mathbb{C}_{\infty}((t))&\rightarrow\mathbb{C}_{\infty}((t))\\
	f:=\sum_{i}a_it^i&\mapsto \sum_{i}a_i^{q^{n}}t^i=:f^{(n)}.
\end{align*} 
Moreover, we fix a fundamental period $\tilde{\pi}$ of the Carlitz module (see \cite{Goss, Th}). We define the following power series.
\[
	\Omega=\Omega(t):=(-\theta)^{-q/(q-1)}\prod_{i=1}^{\infty}(1-t/\theta^{q^i})\in\mathbb{C}_{\infty}[[t]]
\]
where $(-\theta)^{1/(q-1)}$ is a fixed $(q-1)$st root of $-\theta$ so that $\frac{1}{\Omega(\theta)}=\tilde{\pi}$ (\cite{ABP, AT1}). 
Here, we also introduce another expression of $\zeta_A(\mathfrak{s};\boldsymbol{\epsilon})$ by using the following theorem which was shown by Anderson and Thakur. 

\begin{thm}[\cite{AT}]
For each $s\in\mathbb{N}$, there exists an unique polynomial $H_s=H_s(t)\in A[t]$ such that 
\begin{align}\label{atint}
    (H_{s-1}\Omega^s)^{(d)}|_{t=\theta}=\Gamma_sS_d(s)/\tilde{\pi}^{s}
\end{align}
for all $d\in\mathbb{Z}_{\geq 0}$ and $s\in \mathbb{N}$. Moreover, when we regard $H_s$ as a polynomial of $\theta$ over $\mathbb{F}_q[t]$ by $A[t]=\mathbb{F}_q[t][\theta]$ then
\begin{align}\label{degh}
    \deg_{\theta}H_s\leq \frac{sq}{q-1}.
\end{align}
\end{thm}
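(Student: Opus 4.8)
\emph{Reduction to an interpolation problem.}
This theorem is due to Anderson and Thakur; I indicate how its proof runs. From the product defining $\Omega$ one reads off the functional equation $\Omega^{(-1)}=(t-\theta)\Omega$, hence $\Omega^{(d)}=\Omega\big/\prod_{i=1}^{d}(t-\theta^{q^i})$ for all $d\ge 0$. Combining this with $\Omega(\theta)^{-1}=\tilde\pi$ and with $\prod_{i=1}^{d}(\theta-\theta^{q^i})=(-1)^{d}L_d$, where $L_d:=\prod_{i=1}^{d}(\theta^{q^i}-\theta)$, gives
\[
(\Omega^{s})^{(d)}\big|_{t=\theta}=\frac{(-1)^{ds}}{\tilde\pi^{s}\,L_d^{\,s}} .
\]
Hence \eqref{atint} is equivalent to producing a polynomial $H_{s-1}$ with
\[
H_{s-1}^{(d)}\big|_{t=\theta}=(-1)^{ds}\,\Gamma_s\,L_d^{\,s}\,S_d(s)\qquad(d\ge 0),
\]
and then showing $\deg_\theta H_{s-1}\le(s-1)q/(q-1)$, which is \eqref{degh}. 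In particular the right-hand side must lie in $A$: the point is that $\Gamma_s$ exactly clears the denominator of $L_d^{\,s}S_d(s)$, a fact that comes out of Carlitz's analysis of the power sums and is in any case a by-product of the construction below.

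\emph{Construction of $H_{s-1}$.}
Following Anderson--Thakur, I would obtain the $H_s$ from a generating identity relating the power sums $S_d(s)$ to the Carlitz exponential $\exp_C(z)=\sum_{i\ge 0}z^{q^i}/D_i$ --- equivalently, from the $s$-th tensor power of the Carlitz module --- and read them off via a recursion of the shape $H_0=1$, $H_s=(\text{an explicit }A[t]\text{-linear combination of }H_{s-1},\,H_{s-q},\dots)$. One then verifies the interpolation identity of the previous step by induction on $s$ and on $d$. The genuinely delicate point is \emph{integrality}: one must check that the coefficients produced by the recursion lie in $A[t]$ rather than merely in $k(t)$, and this is where the divisibility relations among the $D_i$ (the combinatorial backbone of $\exp_C$) are used. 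This is the technical heart of the argument.

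\emph{Degree bound and uniqueness.}
Granting that $H_{s-1}\in A[t]$ satisfies the interpolation identity, one gets \eqref{degh} as follows. For $d$ large the monomials $\theta^{i}t^{j}$ appearing in $H_{s-1}$ contribute pairwise distinct $\theta$-powers $\theta^{\,iq^{d}+j}$ to $H_{s-1}^{(d)}|_{t=\theta}$, so no cancellation occurs among the top terms and $\deg_\theta\!\big(\Gamma_sL_d^{\,s}S_d(s)\big)=(\deg_\theta H_{s-1})\,q^{d}+O(1)$. Since $\deg_\theta L_d^{\,s}=\tfrac{sq}{q-1}(q^{d}-1)$ and Thakur's lower bound for the $\infty$-adic valuation of power sums gives $v_\infty(S_d(s))\ge\tfrac{q^{d+1}}{q-1}-C_s$ with $C_s$ independent of $d$, comparing leading terms yields $\deg_\theta H_{s-1}\le\tfrac{(s-1)q}{q-1}$. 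For uniqueness, if $H_s$ and $H_s'$ both satisfy the conclusion then $D:=H_s-H_s'\in A[t]$ has $\deg_t D\le sq/(q-1)$ and $D^{(d)}|_{t=\theta}=0$ for every $d$; writing $D=\sum_j p_j(\theta)t^{j}$ this reads $\sum_j p_j(\theta)^{q^{d}}\theta^{j}=0$ for $d=0,1,\dots,\deg_t D$, and the nonvanishing of the corresponding Moore determinant (equivalently, the $\mathbb{F}_q$-linear independence of $1,\theta,\theta^{2},\dots$) forces $D=0$.

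\emph{Main obstacle.}
The one substantial step is the construction: writing down the explicit generating identity / recursion and proving that the resulting $H_s$ really have coefficients in $A[t]$. This is precisely the content of Anderson--Thakur's study of tensor powers of the Carlitz module; once it is available, the reduction in the first step, the degree bound, and uniqueness are all routine.
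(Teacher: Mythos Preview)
The paper does not prove this theorem at all: it is quoted from Anderson--Thakur \cite{AT} and used as a black box throughout. There is therefore no ``paper's own proof'' to compare your attempt against.

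That said, your sketch is a fair outline of how the Anderson--Thakur argument actually runs. The reduction via $\Omega^{(d)}=\Omega\big/\prod_{i=1}^{d}(t-\theta^{q^{i}})$ and $\Omega(\theta)=\tilde\pi^{-1}$ is correct and is exactly how one isolates the interpolation condition $H_{s-1}^{(d)}|_{t=\theta}=(-1)^{ds}\Gamma_{s}L_{d}^{\,s}S_{d}(s)$. Your description of the construction is accurate in spirit: Anderson and Thakur produce the $H_{s}$ from a generating series built out of the Carlitz exponential (equivalently, from the logarithm of the $s$-th tensor power of the Carlitz module), and the integrality $H_{s}\in A[t]$ is indeed the substantive step. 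In their treatment the degree bound \eqref{degh} is read off directly from the explicit construction rather than recovered a posteriori from asymptotics of $v_{\infty}(S_{d}(s))$, but your route is not wrong.

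One small correction in your uniqueness paragraph: you write $\deg_{t}D\le sq/(q-1)$, but the theorem bounds $\deg_{\theta}$, not $\deg_{t}$. Fortunately no degree bound is needed for uniqueness: $D\in A[t]$ automatically has finite $t$-degree, and for $d$ sufficiently large the monomials $c_{j}^{q^{d}}\theta^{j}$ appearing in $D^{(d)}|_{t=\theta}=\sum_{j}c_{j}^{q^{d}}\theta^{j}$ have pairwise distinct $\theta$-degrees (since $q^{d}\deg_{\theta}c_{j}+j$ are distinct once $q^{d}>\deg_{t}D$), so the sum cannot vanish unless all $c_{j}=0$. This is exactly the degree-separation idea you already use for the degree bound; the Moore-determinant phrasing you give is not quite the right mechanism here, since the system is not of Moore type in the variables $c_{j}$.
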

This polynomial $H_s$ is called the {\it Anderson-Thakur polynomial}. 
From \eqref{chpes} and \eqref{atint}, we obtain the following expression of $\zeta_A(\mathfrak{s};\boldsymbol{\epsilon})$;
\begin{align}
 \zeta_A(\mathfrak{s};\boldsymbol{\epsilon})=\frac{\tilde{\pi}^{s_1+\cdots s_r}}{\Gamma_{s_1}\cdots\Gamma_{s_r}} \sum_{d_1>\cdots>d_r\geq 0}\epsilon_1^{d_1}(H_{s_1-1}\Omega^{s_1})^{(d_1)}|_{t=\theta}\cdots\epsilon_r^{d_r}(H_{s_r-1}\Omega^{s_r})^{(d_r)}|_{t=\theta}.
\end{align}

\section{Fundamental properties}
In this section, we prove the non-vanishing and sum-shuffle relation of AMZVs.

\subsection{Non-vanishing property of AMZVs} 
We show the non-vanishing property as the following theorem by using valuation of power sums which evaluated by Thakur \cite{T}.  
\begin{thm}\label{nonvan}
For any ${\mathfrak s}=(s_1, \ldots, s_r)\in\mathbb{N}^r$ and ${\boldsymbol \epsilon}=(\epsilon_1, \ldots, \epsilon_r) \in{(\mathbb{F}_q^{\times})}^r$, $\zeta_A(\mathfrak{s};\boldsymbol{\epsilon})$ are non-vanishing.
\end{thm}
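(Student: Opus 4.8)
The plan is to analyze the $\infty$-adic valuation of $\zeta_A(\mathfrak{s};\boldsymbol{\epsilon})$ via the decomposition \eqref{chpes}, $\zeta_A(\mathfrak{s};\boldsymbol{\epsilon})=\sum_{d\geq 0}S_d(\mathfrak{s};\boldsymbol{\epsilon})$, and to show that exactly one term dominates, forcing the sum to be nonzero. The crucial input is Thakur's estimate for the $\infty$-adic valuation of the ordinary power sums $S_d(s)$; since $S_d(s;\epsilon)=\epsilon^d S_d(s)$ and $|\epsilon|_\infty=1$ for $\epsilon\in\mathbb{F}_q^{\times}$, the alternating sign does not affect absolute values at all: $|S_d(s;\epsilon)|_\infty=|S_d(s)|_\infty$. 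Therefore $|S_d(\mathfrak{s};\boldsymbol{\epsilon})|_\infty=|S_d(\mathfrak{s})|_\infty$ term by term, and likewise for the nested sums $S_{<d}$. So the valuation-theoretic situation is \emph{identical} to that of the non-alternating MZVs, and Thakur's non-vanishing argument for $\zeta_A(\mathfrak{s})$ in \cite{T} transfers essentially verbatim.

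Concretely, I would first recall from \cite{T} the precise growth rate: $\deg_\theta S_d(s)$ behaves like $-(d+1)s + (\text{something of size }O(d))$ coming from the leading $1/\theta^{\deg}$ contributions, so that $|S_d(s)|_\infty$ is strictly decreasing in $d$ for $d$ large (indeed the valuations are distinct for distinct $d$ once $d$ exceeds a bound depending only on $s$). Then, for the depth-$r$ nested expression, I would argue by induction on $r$: assuming $|S_{<d}(s_2,\dots,s_r;\epsilon_2,\dots,\epsilon_r)|_\infty$ is controlled (its dominant contribution coming from the smallest admissible $d_2,\dots,d_r$, i.e. $d_i=r-i$), one multiplies by $|S_d(s_1;\epsilon_1)|_\infty$ and checks that the resulting quantities $|S_d(\mathfrak{s};\boldsymbol{\epsilon})|_\infty$ are pairwise distinct for $d$ large, with a unique strict maximum. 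By the ultrametric inequality, the infinite sum $\sum_{d\geq 0}S_d(\mathfrak{s};\boldsymbol{\epsilon})$ then has valuation equal to that unique maximal term, hence is nonzero.

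The main obstacle — and the only place any real work is needed — is the bookkeeping in the induction on depth: one must verify that when the valuations of the first few terms $S_0(\mathfrak{s};\boldsymbol{\epsilon}),\dots,S_{N}(\mathfrak{s};\boldsymbol{\epsilon})$ are not yet in the "stable decreasing" regime, no accidental cancellation occurs among them, or more precisely that the stable tail already dominates all of them. Thakur handles exactly this point in \cite{T} for MZVs, and since, as noted, all the relevant absolute values coincide with the MZV case, I would simply invoke his inequalities and conclude. In short: \emph{the non-vanishing of $\zeta_A(\mathfrak{s};\boldsymbol{\epsilon})$ reduces, via $|\epsilon|_\infty=1$, to the non-vanishing of $\zeta_A(\mathfrak{s})$ already established by Thakur.} One should also record the degenerate bookkeeping convention that $S_d(\mathfrak{s};\boldsymbol{\epsilon})=0$ for $d<r-1$, so the effective sum starts at $d=r-1$ and the unique dominant term occurs at some explicit $d\geq r-1$.
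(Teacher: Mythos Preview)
Your approach is correct and is exactly the paper's: reduce to the non-alternating case via $|\epsilon|_\infty=1$, then invoke Thakur's valuation inequalities together with the ultrametric property. You overcomplicate it, though. Thakur's result in \cite{T} is that $\deg_\theta S_d(s) > \deg_\theta S_{d+1}(s)$ holds for \emph{every} $d\geq 0$, not merely for $d$ large; there is no ``unstable initial regime'' to worry about and no induction on depth is needed. The unique dominant term in the full expansion $\sum_{d_1>\cdots>d_r\geq 0}\epsilon_1^{d_1}\cdots\epsilon_r^{d_r}S_{d_1}(s_1)\cdots S_{d_r}(s_r)$ is simply the one with $(d_1,\ldots,d_r)=(r-1,r-2,\ldots,0)$, so the paper concludes in one line that $|\zeta_A(\mathfrak{s};\boldsymbol{\epsilon})|_\infty=|S_{r-1}(s_1)S_{r-2}(s_2)\cdots S_0(s_r)|_\infty\neq 0$.
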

\begin{proof}
From \eqref{chpes}, we can write $\zeta_A(\mathfrak{s};\boldsymbol{\epsilon})$ as follows.
\begin{align*}
	 \zeta_A(\mathfrak{s};\boldsymbol{\epsilon})=&\sum_{d_1>d_2>\cdots >d_r\geq0}\epsilon_1^{d_1}\epsilon_2^{d_2}\cdots\epsilon_r^{d_r}S_{d_1}(s_1)S_{d_2}(s_2)\cdots S_{d_r}(s_r)
\end{align*}
On the other hand, in \cite{T}, Thakur showed that
\[
	\deg_{\theta}S_d(k)>\deg_{\theta}S_{d+1}(k).
\]
Therefore we have
\begin{align*}
	|\zeta_A(s_1, \ldots, s_r; \epsilon_1, \ldots, \epsilon_r)|_{\infty}&=|\sum_{d_1>d_2>\cdots >d_r\geq0}\epsilon_1^{d_1}\epsilon_2^{d_2}\cdots\epsilon_r^{d_r}S_{d_1}(s_1)S_{d_2}(s_2)\cdots S_{d_r}(s_r)|_{\infty}\\
	&=|S_{r-1}(s_1)S_{r-2}(s_2)\cdots S_{0}(s_r)|_{\infty}\\
	&\neq0
\end{align*}
by using $\deg_{\theta}S_0(k)=0$ and $\deg_{\theta}S_d(k)<0$ $( k>0, d>0 )$ in \cite[\S 2.2.3.]{T}. 
Thus $\zeta_A({\mathfrak s}; {\boldsymbol \epsilon})$ are non-vanishing.
\end{proof}

\subsection{Sum-shuffle relation for AMZVs}\label{subsecsh}
In this section, we give sum-shuffle relations for our $\zeta_A({\mathfrak s}; {\boldsymbol \epsilon})$. This kind of relations show that products of two  AMZVs are expressed by $\mathbb{F}_{p}$-linear combination of AMZVs with preserving their weights.
From the relation, $\zeta_A({\mathfrak s}; {\boldsymbol \epsilon})$ form an $\mathbb{F}_{p}$-algebra. 

For the products of power sums $S_d(s)$, the following formula was shown by Chen \cite{HJC}.
\begin{prop}[\cite{HJC} Theorem 3.1]\label{chen}
For $s_1, s_2\in\mathbb{N}$, we have
\begin{align*}
	S_d(s_1)&S_d(s_2)-S_d(s_1+s_2)=\sum_{\substack{0<j<s_1+s_2\\q-1|j}}\Delta^{j}_{s_1, s_2}S_d(s_1+s_2-j, j)
\end{align*}
where 
\begin{align}\label{deltadef}
\Delta^{j}_{s_1, s_2}=(-1)^{s_1-1}\binom{j-1}{s_1-1}+(-1)^{s_2-1}\binom{j-1}{s_2-1}.
\end{align}
\end{prop}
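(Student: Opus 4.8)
The plan is to prove the identity by a direct manipulation of the defining double sum for the product $S_d(s_1)S_d(s_2)$, splitting the index set according to the order of the two degree-$d$ monic polynomials. Write
\[
S_d(s_1)S_d(s_2)=\sum_{a,b\in A_{d+}}\frac{1}{a^{s_1}b^{s_2}}
=\sum_{a=b}\frac{1}{a^{s_1+s_2}}+\sum_{\substack{a,b\in A_{d+}\\ a\neq b}}\frac{1}{a^{s_1}b^{s_2}},
\]
so that the first term is exactly $S_d(s_1+s_2)$, and the whole problem reduces to understanding the ``off-diagonal'' sum $\sum_{a\neq b}a^{-s_1}b^{-s_2}$. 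The key device is that for $a\neq b$ in $A_{d+}$ the difference $a-b$ has degree strictly less than $d$ (the leading terms cancel), which is what lets one rewrite $1/(a^{s_1}b^{s_2})$ in terms of power sums of lower-degree polynomials. Concretely I would use the partial-fraction type expansion
\[
\frac{1}{a^{s_1}b^{s_2}}=\sum_{i=1}^{s_1}\frac{1}{(a-b)^{s_2+i}}\binom{s_2+i-2}{i-1}\frac{(-1)^{s_1-i}}{a^{\,s_1-i+1}}\big|_{\text{suitably symmetrized}}+\cdots,
\]
that is, expand $\tfrac1{b^{s_2}}=\tfrac{1}{(a-(a-b))^{s_2}}$ as a (finite, since we are in characteristic $p$ and eventually want only the terms surviving mod $p$) binomial series in $(a-b)/a$; collecting powers produces terms of the form $\binom{s_2+i-1}{i}a^{-(s_1+s_2-\,?)}(a-b)^{-\,?}$, and the binomial coefficients $\binom{j-1}{s_1-1}$, $\binom{j-1}{s_2-1}$ in $\Delta^j_{s_1,s_2}$ are exactly the ones that emerge after symmetrizing in $a\leftrightarrow b$.

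The next step is to reorganize the resulting triple description: letting $c=a-b$ range over nonzero polynomials of degree $<d$, and using that as $a$ runs over $A_{d+}$ with $a-b=c$ fixed the pair $(a,b)$ is pinned down once we also fix which of $a,b$ is ``larger'', one recognizes the inner sum over $a$ (equivalently over the monic polynomial of degree $d$) as $S_d$ of the appropriate single argument, while the sum over $c$ of degree $<d$ assembles into the inner slot of the depth-two power sum $S_d(s_1+s_2-j,j)$. Here one must be careful that $S_d(u,v)=S_d(u)\sum_{d>e\ge 0}S_e(v)$ by definition, so the degree-$<d$ summation over $c$ is precisely the $\sum_{d>e\ge0}S_e(v)$ factor; matching the exponents forces $j$ to be the exponent attached to $c$ and $s_1+s_2-j$ the one attached to $a$. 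The congruence condition $q-1\mid j$ appears because $\sum_{c\in A_{e+}}c^{-j}=S_e(j)$ and the contributions with $q-1\nmid j$ cancel after summing over the $\mathbb{F}_q^\times$ of scalings of $c$ (equivalently, $\sum_{\lambda\in\mathbb{F}_q^\times}\lambda^{-j}=0$ unless $q-1\mid j$); likewise $0<j<s_1+s_2$ is the range in which the binomial coefficients $\binom{j-1}{s_1-1}$ and $\binom{j-1}{s_2-1}$ are not both forced to vanish.

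The main obstacle I anticipate is the bookkeeping in the binomial expansion: one is expanding $1/b^{s_2}$ (or $1/a^{s_1}$) around the ``center'' $a$ (resp.\ $b$) with an a priori \emph{infinite} binomial series, and one has to argue that, after symmetrization $a\leftrightarrow b$ and reduction modulo $p$, only the finitely many terms with exponent $j$ in the stated range survive, and that their coefficients combine into exactly $\Delta^j_{s_1,s_2}=(-1)^{s_1-1}\binom{j-1}{s_1-1}+(-1)^{s_2-1}\binom{j-1}{s_2-1}$. This is essentially a Vandermonde/Chu–Vandermonde identity for binomial coefficients together with the observation that $\binom{j-1}{s_1-1}=0$ whenever $j<s_1$, which cuts the infinite tail. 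Since the statement is quoted from Chen \cite{HJC} I would at this point simply cite Theorem~3.1 of that paper for the full combinatorial verification rather than reproduce it, and use the formula as a black box in what follows.
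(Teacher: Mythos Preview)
Your overall architecture---split the double sum over $a,b\in A_{d+}$ into the diagonal $a=b$ (giving $S_d(s_1+s_2)$) and the off-diagonal $a\neq b$, then exploit that $c:=a-b$ has $\deg c<d$, and finally use the $\mathbb{F}_q^\times$-scaling of $c$ to produce the congruence $q-1\mid j$---matches exactly the skeleton that the paper sketches (following Chen).

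Where your proposal diverges from the paper, and where there is a genuine gap, is the device for rewriting $1/(a^{s_1}b^{s_2})$. You propose to expand $\tfrac{1}{b^{s_2}}=\tfrac{1}{(a-(a-b))^{s_2}}$ as a binomial series in $(a-b)/a$. But that expansion is
\[
\frac{1}{b^{s_2}}=\frac{1}{a^{s_2}}\sum_{i\ge 0}\binom{s_2+i-1}{i}\Bigl(\frac{a-b}{a}\Bigr)^{i},
\]
which places $(a-b)^{i}$ in the \emph{numerator}, not the denominator. After multiplying by $a^{-s_1}$ and summing over $c=a-b$ of degree $<d$, you therefore do not obtain the inner power sums $S_e(j)=\sum_{c\in A_{e+}}c^{-j}$; you get positive-power character sums instead. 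So the route you sketch does not assemble into $S_d(s_1+s_2-j,j)$, and the ``cancellation after symmetrization'' you anticipate is not what saves it. The series is also genuinely infinite in characteristic $p$ (the coefficients $\binom{s_2+i-1}{i}$ do not eventually vanish), so convergence would need to be argued, a complication the correct approach avoids entirely.

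What the paper (and Chen) actually use is the \emph{finite} algebraic identity
\[
\frac{1}{a^{s_1}b^{s_2}}=\sum_{0<j<s_1+s_2}\Biggl\{ \frac{(-1)^{s_1-1}\binom{j-1}{s_1-1}}{a^{\,s_1+s_2-j}(a-b)^{j}}+\frac{(-1)^{s_2-1}\binom{j-1}{s_2-1}}{b^{\,s_1+s_2-j}(a-b)^{j}}\Biggr\},
\]
a partial-fraction decomposition valid for any $a\neq b$. This puts $(a-b)^{j}$ squarely in the denominator, so that after summing over $a\neq b$ in $A_{d+}$, renaming the variable $a-b$, and scaling it to be monic, one obtains exactly the depth-two sums $S_d(s_1+s_2-j,j)$ with the coefficients $\Delta^{j}_{s_1,s_2}$ and the divisibility constraint $q-1\mid j$, as in equation~\eqref{pfdt}. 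Replace your binomial-series step with this partial-fraction identity and the rest of your outline goes through without the bookkeeping obstacle you flagged.
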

The key idea to prove Proposition \ref{chen} is the following partial fraction decomposition
\begin{align*} 
\frac{1}{a^{s_1}b^{s_2}}=\sum_{0<j<s_1+s_2}\Biggl\{ \frac{(-1)^{s_1-1}\binom{j-1}{s_1-1}}{a^{s_1+s_2-j} (a-b)^j }+\frac{(-1)^{s_2-1}\binom{j-1}{s_2-1}}{b^{s_1+s_2-j} (a-b)^{j} } \Biggr\}
\end{align*}
for $a, b\in A\backslash\{0\}$.
By using the above decomposition, she obtained the following formula in the proof of \cite[Theorem3.1]{HJC}:
\begin{align}\label{pfdt}
\sum_{\substack{ a\neq b\in A_{+}\\ \deg a=\deg b}}\frac{1}{a^{s_1}b^{s_2}}=\sum_{\substack{0<j<s_1+s_2\\(q-1)|j}}\Biggl\{\sum_{\substack{ a,b\in A_{+}\\ \deg a>\deg b}} \frac{(-1)^{s_1-1}\binom{j-1}{s_1-1}}{a^{s_1+s_2-j}b^j  }+\sum_{ \substack{ a,b\in A_{+}\\ \deg b>\deg a} }\frac{(-1)^{s_2-1}\binom{j-1}{s_2-1}}{b^{s_1+s_2-j}a^{j} } \Biggr\}.
\end{align}
Chen's method is applied to prove the following lemma.
\begin{lem}\label{hl1}
For $s_1, s_2\in\mathbb{N}$, $\epsilon_1, \epsilon_2\in\mathbb{F}_q^{\times}$ and $d\in\mathbb{Z}_{\geq 0}$, we have
\begin{align}\label{lempowsh}
	S_d(s_1;\epsilon_1)&S_d(s_2;\epsilon_2)-S_d(s_1+s_2;\epsilon_1\epsilon_2)=\sum_{\substack{0<j<s_1+s_2\\q-1|j}}\Delta^{j}_{s_1, s_2}S_d(s_1+s_2-j, j; \epsilon_1\epsilon_2, 1)
\end{align}
\end{lem}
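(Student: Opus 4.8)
The plan is to deduce Lemma \ref{hl1} from Chen's formula (Proposition \ref{chen}) essentially by bookkeeping with the sign characters. Recall that $S_d(s;\epsilon)=\epsilon^d S_d(s)$, so the left-hand side of \eqref{lempowsh} equals
\[
\epsilon_1^d\epsilon_2^d\bigl(S_d(s_1)S_d(s_2)\bigr)-(\epsilon_1\epsilon_2)^dS_d(s_1+s_2)
=(\epsilon_1\epsilon_2)^d\bigl(S_d(s_1)S_d(s_2)-S_d(s_1+s_2)\bigr).
\]
First I would apply Proposition \ref{chen} to rewrite the parenthesized difference as $\sum_{0<j<s_1+s_2,\,q-1\mid j}\Delta^j_{s_1,s_2}S_d(s_1+s_2-j,j)$, so the left-hand side becomes $(\epsilon_1\epsilon_2)^d\sum_j\Delta^j_{s_1,s_2}S_d(s_1+s_2-j,j)$.

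Next I would identify $(\epsilon_1\epsilon_2)^d S_d(s_1+s_2-j,j)$ with $S_d(s_1+s_2-j,j;\epsilon_1\epsilon_2,1)$. This is exactly the definition \eqref{extpowsum}: unwinding it,
\[
S_d(s_1+s_2-j,j;\epsilon_1\epsilon_2,1)=S_d(s_1+s_2-j;\epsilon_1\epsilon_2)\sum_{d>d_2\geq 0}S_{d_2}(j;1)
=(\epsilon_1\epsilon_2)^d S_d(s_1+s_2-j)\sum_{d>d_2\geq 0}S_{d_2}(j),
\]
and the right-hand factor is precisely the inner sum defining $S_d(s_1+s_2-j,j)$ in Thakur's notation. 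Summing over the admissible $j$ with the coefficients $\Delta^j_{s_1,s_2}$ then yields \eqref{lempowsh}. Alternatively, and perhaps cleaner for the write-up, one can redo Chen's partial-fraction argument directly: multiply the decomposition of $1/(a^{s_1}b^{s_2})$ through by $\epsilon_1^d\epsilon_2^d$ where $d=\deg a=\deg b$, observe that along the diagonal $\deg a=\deg b=d$ one has $\epsilon_1^d\epsilon_2^d=(\epsilon_1\epsilon_2)^d$, and then note that in each resulting term the combined exponent carried is $\epsilon_1\epsilon_2$ on the degree-$d$ variable and $1$ on the lower-degree variable (since $a-b$ still has degree $d$ and the sub-sum variable $b$ resp.\ $a$ runs over strictly smaller degree, contributing the trivial character). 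This reproduces \eqref{pfdt} with the extra factor $(\epsilon_1\epsilon_2)^d$ attached to every term uniformly.

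The only point requiring a little care — and the one I would expect to be the main (though minor) obstacle — is checking that the sign character genuinely factors as $(\epsilon_1\epsilon_2)^d$ across all terms, i.e.\ that the two summands in Chen's partial fraction really do carry $\epsilon_1\epsilon_2$ on the outer (degree-$d$) index and $1$ on the inner index, with no asymmetry between the $a>b$ and $b>a$ branches. This works precisely because on the diagonal the two degrees coincide before decomposition, so the product $\epsilon_1^d\epsilon_2^d$ is symmetric and splits off cleanly; after decomposition the new denominators involve $a-b$ (degree $d$) and one original variable (degree $<d$ in the sub-sum), and the character weight must be tracked through $S_d(s_1+s_2-j,j;\epsilon_1\epsilon_2,1)$ rather than, say, $S_d(s_1+s_2-j,j;\epsilon_1,\epsilon_2)$. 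Once this is verified the lemma follows immediately, and it is this identity \eqref{lempowsh} that will feed the induction (à la Thakur \cite{T1}) establishing the full sum-shuffle relation in Theorem \ref{shamz}.
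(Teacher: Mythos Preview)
Your proof is correct. Your primary argument---factor out $(\epsilon_1\epsilon_2)^d$, apply Chen's formula (Proposition~\ref{chen}) verbatim, then recognize $(\epsilon_1\epsilon_2)^d S_d(s_1+s_2-j,j)=S_d(s_1+s_2-j,j;\epsilon_1\epsilon_2,1)$---is in fact slightly more direct than what the paper does: the paper chooses to rerun Chen's computation with the characters carried along, expanding the double sum over $A_{d+}\times A_{d+}$, cancelling the diagonal against $S_d(s_1+s_2;\epsilon_1\epsilon_2)$, and then applying the partial-fraction identity \eqref{pfdt} with the prefactor $(\epsilon_1\epsilon_2)^d$ attached. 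This is exactly the ``alternative'' you sketch in your second paragraph. Both routes are equivalent and equally short; your first route has the minor advantage of invoking Proposition~\ref{chen} as a black box rather than reopening it, while the paper's route makes the role of \eqref{pfdt} (and hence the reason the inner character is $1$) more visibly explicit. Your discussion of why the character splits as $(\epsilon_1\epsilon_2,1)$ rather than $(\epsilon_1,\epsilon_2)$ is on point and matches the paper's implicit reasoning.
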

\begin{proof}
We give a proof based on the idea in \cite{HJC}.
\begin{align*}
S_d(s_1;\epsilon_1)&S_d(s_2;\epsilon_2)-S_d(s_1+s_2;\epsilon_1\epsilon_2)\\
&=\sum_{a\in A_{d+}}\frac{\epsilon_1^{d}}{a^{s_1}}\sum_{b\in A_{d+}}\frac{\epsilon_2^{d}}{b^{s_2}} -S_d(s_1+s_2;\epsilon_1\epsilon_2)\\
&=\sum_{\substack{a,b\in A_{\plus}\\ d=\deg a>\deg b\geq0}}\frac{(\epsilon_1\epsilon_2)^d}{a^{s_1}b^{s_2}}+\sum_{\substack{a,b\in A_{\plus}\\ d=\deg b>\deg a\geq0}}\frac{(\epsilon_1\epsilon_2)^d}{b^{s_2}a^{s_1}}+\sum_{\substack{a=b\in A_{d\plus}\\ d=\deg a=\deg b}}\frac{(\epsilon_1\epsilon_2)^d}{a^{s_1}b^{s_2}}\\
&\quad +\sum_{\substack{a\neq b\in A_{\plus}\\ d=\deg a=\deg b}}\frac{(\epsilon_1\epsilon_2)^d}{a^{s_1}b^{s_2}} -S_d(s_1+s_2;\epsilon_1\epsilon_2).\\
\intertext{By \eqref{extpowsum}, we have $\sum_{\substack{a=b\in A_{\plus}\\ d=\deg a=\deg b}}\frac{(\epsilon_1\epsilon_2)^d}{a^{s_1}b^{s_2}}=S_d(s_1+s_2;\epsilon_1\epsilon_2)$ and thus } 
\sum_{\substack{a,b\in A_{\plus}\\ d=\deg a>\deg b\geq0}}&\frac{(\epsilon_1\epsilon_2)^d}{a^{s_1}b^{s_2}}+\sum_{\substack{a,b\in A_{\plus}\\ d=\deg b>\deg a\geq0}}\frac{(\epsilon_1\epsilon_2)^d}{b^{s_2}a^{s_1}}+\sum_{\substack{a=b\in A_{d\plus}\\ d=\deg a=\deg b}}\frac{(\epsilon_1\epsilon_2)^d}{a^{s_1}b^{s_2}}\\
&\qquad\quad \ +\sum_{\substack{a\neq b\in A_{\plus}\\ d=\deg a=\deg b}}\frac{(\epsilon_1\epsilon_2)^d}{a^{s_1}b^{s_2}} -S_d(s_1+s_2;\epsilon_1\epsilon_2)\\
&=S_d(s_1, s_2; \epsilon_1\epsilon_2, 1)+S_d(s_2, s_1;\epsilon_1\epsilon_2, 1)+\sum_{\substack{a\neq b\in A_{\plus} \\ d=\deg a=\deg b\geq 0}}\frac{(\epsilon_1\epsilon_2)^d}{a^{s_1}b^{s_2}}.
\end{align*}
By \eqref{pfdt}, we have
\begin{align*}
\sum_{\substack{ a\neq b\in A_{\plus}\\d=\deg a=\deg b\geq 0}}\frac{(\epsilon_1\epsilon_2)^d}{a^{s_1}b^{s_2}}&=\sum_{\substack{0<j<s_1+s_2\\q-1|j}}\Biggl\{\sum_{\substack{ x, y\in A_{+}\\ d=\deg x>\deg y}} \frac{(-1)^{s_1-1}\binom{j-1}{s_1-1} (\epsilon_1\epsilon_2)^d }{x^{s_1+s_2-j}y^j  }\\
&\qquad\qquad\qquad +\sum_{ \substack{ x, y\in A_{+}\\ d=\deg y>\deg x} }\frac{(-1)^{s_2-1}\binom{j-1}{s_2-1}(\epsilon_1\epsilon_2)^d}{y^{s_1+s_2-j}x^{j} } \Biggr\}\\
&=\sum_{\substack{0<j<s_1+s_2\\q-1|j}}\Delta^{j}_{s_1, s_2}S_d(s_1+s_2-j, j; \epsilon_1\epsilon_2, 1).
\end{align*}

Therefore the relation \eqref{lempowsh} is proved.
\end{proof}
\begin{rem}
In Lemma \ref{hl1}, coefficients $\Delta^j_{s_1, s_2}$ are independent of $d$.
\end{rem}

Preceding to the next argument, we introduce an expression which is used in the rest of this section. For any index $\mathfrak{s}=(s_1, s_2, \ldots, s_r)\in\mathbb{N}^r$, we can write $\mathfrak{s}=(s_1, \mathfrak{s}')$ where $\mathfrak{s}'=(s_2, \ldots, s_r)\in\mathbb{N}^r$ (resp. $\boldsymbol{\epsilon}=(\epsilon_1, \ldots, \epsilon_r)\in(\mathbb{F}_q^{\times})^{r}$) and when $r=1$, we set $\mathfrak{s}'=\phi$ (resp. $\boldsymbol{\epsilon}'=\phi$) and further $S_d(\mathfrak{s}';\boldsymbol{\epsilon}'):=1$.

Next we prepare the following lemma to show sum-shuffle relations for alternating power sums in general depth.  
\begin{lem}\label{lempowless}
For ${\mathfrak a}:=(a_1, a_2, \ldots, a_r)\in\mathbb{N}^{r}$, ${\mathfrak b}:=(b_1, b_2, \ldots, b_s)\in\mathbb{N}^{s}$, ${\boldsymbol \epsilon}:=(\epsilon_1, \epsilon_2, \ldots, \epsilon_r)\in(\mathbb{F}_q^{\times})^{r}$ and ${\boldsymbol \lambda}:=(\lambda_1, \lambda_2, \ldots, \lambda_s)\in(\mathbb{F}_q^{\times})^{s}$, we may express the product $S_{<d}({\mathfrak a};{\boldsymbol \epsilon})S_{<d}({\mathfrak b};{\boldsymbol \lambda})$ as follows:
\begin{align}\label{lempow}
	S_{<d}({\mathfrak a};{\boldsymbol \epsilon})S_{<d}({\mathfrak b}&;{\boldsymbol \lambda})=\sum_{i} {f_i}S_{<d}(c_{i1}, \ldots, c_{i{l_{i}}}; \mu_{i1}, \ldots, \mu_{i{l_{i}}})
\end{align}
for some $c_{ij}\in\mathbb{N}$, $\mu_{ij}\in\mathbb{F}_q^{\times}$ so that $\sum_{m=1}^{r}a_m+\sum_{n=1}^{s}b_n=\sum_{h=1}^{l_i}c_h$, $\prod_{m=1}^{r}\epsilon_m\prod_{n=1}^{s}\lambda_n=\prod_{h=1}^{l_i}\mu_h$, $l_i\leq r+s$ and $f_i\in\mathbb{F}_p$ which are independent of $d$ for each $i$. 
\end{lem}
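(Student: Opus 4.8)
The plan is to prove Lemma \ref{lempowless} by double induction on the total depth $r+s$, using Lemma \ref{hl1} as the base ingredient and the combinatorial structure of the nested sums $S_{<d}$ to reduce depth. First I would record the basic "splitting" identity coming directly from the definition \eqref{extpowsum}: for an index of length $\geq 1$ we have $S_{<d}({\mathfrak a};{\boldsymbol\epsilon}) = \sum_{d>e\geq 0} S_e(a_1;\epsilon_1)S_{<e}({\mathfrak a}';{\boldsymbol\epsilon}')$, where ${\mathfrak a}'=(a_2,\ldots,a_r)$ and ${\boldsymbol\epsilon}'=(\epsilon_2,\ldots,\epsilon_r)$ (and the convention $S_{<e}(\phi;\phi)=1$ from the paragraph preceding the lemma). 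Thus the product $S_{<d}({\mathfrak a};{\boldsymbol\epsilon})S_{<d}({\mathfrak b};{\boldsymbol\lambda})$ becomes a double sum over $d>e_1\geq 0$ and $d>e_2\geq 0$, which I split into the three regions $e_1>e_2$, $e_1<e_2$, and $e_1=e_2$.

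In the region $e_1>e_2$, the inner factor $S_{<e_2}(\cdots)$ is already "below" $e_1$, so one can absorb $S_{e_1}(a_1;\epsilon_1)$ and rewrite the contribution as $\sum_{d>e_1\geq 0} S_{e_1}(a_1;\epsilon_1)\cdot\big[S_{<e_1}({\mathfrak a}';{\boldsymbol\epsilon}')\,S_{<e_1}({\mathfrak b};{\boldsymbol\lambda})\big]$ — here I have re-extended the range of the ${\mathfrak b}$-sum from $e_2<e_1$ back up to $<e_1$, which is legitimate because summing over $d>e_1>e_2\geq 0$ of the full $S_{<e_1}$-style expression reproduces exactly the triple nesting; more precisely, $\sum_{d > e_1 \ge 0} S_{e_1}(a_1;\epsilon_1) S_{<e_1}({\mathfrak a}';{\boldsymbol\epsilon}') S_{<e_1}({\mathfrak b};{\boldsymbol\lambda})$ is $S_{<d}$ of a product of two strictly shorter indices together with one extra leading entry. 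By the induction hypothesis applied to the pair $({\mathfrak a}',{\mathfrak b})$ of total depth $(r-1)+s < r+s$, the bracket is an $\mathbb{F}_p$-linear combination of $S_{<e_1}$'s of weight $\mathrm{wt}({\mathfrak a}')+\mathrm{wt}({\mathfrak b})$, depth $\leq (r-1)+s$, and sign-product $\big(\prod_{m\geq 2}\epsilon_m\big)\big(\prod_n\lambda_n\big)$, with coefficients independent of $e_1$ (hence of $d$). Re-attaching $S_{e_1}(a_1;\epsilon_1)$ out front and summing over $e_1<d$ gives terms $S_{<d}(a_1,\text{stuff};\epsilon_1,\text{stuff})$ of weight $a_1+\mathrm{wt}({\mathfrak a}')+\mathrm{wt}({\mathfrak b}) = \mathrm{wt}({\mathfrak a})+\mathrm{wt}({\mathfrak b})$, depth $\leq 1+(r-1)+s = r+s$, and correct sign product $\prod_m\epsilon_m\prod_n\lambda_n$. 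The region $e_1<e_2$ is handled symmetrically, swapping the roles of $({\mathfrak a},{\boldsymbol\epsilon})$ and $({\mathfrak b},{\boldsymbol\lambda})$.

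The diagonal region $e_1=e_2=:e$ is where Lemma \ref{hl1} enters. There the contribution is $\sum_{d>e\geq 0} S_e(a_1;\epsilon_1)S_e(b_1;\lambda_1)\, S_{<e}({\mathfrak a}';{\boldsymbol\epsilon}')\,S_{<e}({\mathfrak b}';{\boldsymbol\lambda}')$. I first rewrite $S_e(a_1;\epsilon_1)S_e(b_1;\lambda_1)$ via \eqref{lempowsh} as $S_e(a_1+b_1;\epsilon_1\lambda_1) + \sum_{0<j<a_1+b_1,\,(q-1)\mid j}\Delta^j_{a_1,b_1}S_e(a_1+b_1-j,j;\epsilon_1\lambda_1,1)$. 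Substituting $S_e(a_1+b_1-j,j;\epsilon_1\lambda_1,1) = S_e(a_1+b_1-j;\epsilon_1\lambda_1)\,S_{<e}(j;1)$ (again by \eqref{extpowsum}), each resulting summand over $e<d$ has the shape $S_{e}(c;\mu)$ times a product $S_{<e}(\text{possibly }j;\ldots)\,S_{<e}({\mathfrak a}';{\boldsymbol\epsilon}')\,S_{<e}({\mathfrak b}';{\boldsymbol\lambda}')$ of $S_{<e}$'s whose combined depth is at most $(r-1)+(s-1)+1 = r+s-1$. Applying the induction hypothesis (in its more general form, or iteratively two at a time) to that product of $S_{<e}$'s — of total depth $< r+s$ — turns it into an $\mathbb{F}_p$-combination of single $S_{<e}$'s with the tracked weight and sign-product data, with $e$-independent coefficients; reattaching $S_e(c;\mu)$ and summing over $e<d$ gives honest $S_{<d}$-terms. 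Weight is conserved because \eqref{lempowsh} is weight-homogeneous ($c + (\text{rest}) = a_1+b_1+\mathrm{wt}({\mathfrak a}')+\mathrm{wt}({\mathfrak b}')$) and the sign product is conserved because $\mu$ times the other signs equals $\epsilon_1\lambda_1\prod_{m\geq 2}\epsilon_m\prod_{n\geq 2}\lambda_n = \prod_m\epsilon_m\prod_n\lambda_n$. The depth bound $\leq r+s$ holds in all three regions as checked above, and all coefficients are built from $\Delta^j$'s, binomials, and the inductive coefficients, none of which involve $d$.

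The main obstacle I anticipate is purely bookkeeping rather than conceptual: setting up the induction so that the hypothesis can be invoked on products of \emph{two} strictly shorter indices (as needed in the off-diagonal regions) versus products of \emph{three} or more (as needed on the diagonal after expanding via Lemma \ref{hl1}). The cleanest fix is to prove a slightly stronger statement by induction — that any $\mathbb{F}_p$-linear combination of products of $S_{<d}$-terms whose indices have total depth $N$ can be written as an $\mathbb{F}_p$-combination of single $S_{<d}$-terms of the same total weight, total sign-product, and depth $\leq N$, with $d$-independent coefficients — and induct on $N$; then both regions are instances of the same inductive step, since one merges two of the factors at a time and the merge strictly decreases the number of factors while leaving total depth $\leq N$. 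One must also be careful that the re-extension of summation ranges in the off-diagonal regions (from $e_2 < e_1$ up to $e_2 < d$, etc.) is bijective onto the correct nested-index set and introduces no spurious terms; this is a short but genuine verification using the defining formula \eqref{extpowsum} for $S_{<d}$ and the telescoping $\sum_{d>e_1\ge 0}\sum_{e_1>e_2\ge 0} = \sum_{d>e_1>e_2\ge 0}$.
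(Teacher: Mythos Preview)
Your proposal is correct and follows essentially the same approach as the paper's proof: induction on the total depth $r+s$, splitting the double sum into the three regions $e_1>e_2$, $e_1<e_2$, $e_1=e_2$, applying the induction hypothesis in the off-diagonal regions and Lemma~\ref{hl1} followed by induction on the diagonal. Your discussion of the three-factor product arising on the diagonal (and the need to iterate the two-factor hypothesis) is in fact more explicit than the paper's own handling of this point.
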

\begin{proof}
We proceed to prove the result by induction on ${\rm dep}(\mathfrak{a})+{\rm dep}(\mathfrak{b})(=r+s)>2$:
\begin{align*}
	S_{<d}&({\mathfrak a};{\boldsymbol \epsilon})S_{<d}({\mathfrak b};{\boldsymbol \lambda})\\
	&=\biggl(\sum_{0\leq m_1<d}S_{m_1}({\mathfrak a};{\boldsymbol \epsilon})\biggr)\biggl(\sum_{0\leq n_1<d}S_{n_1}({\mathfrak b};{\boldsymbol \lambda})\biggr)=\sum_{0\leq m_1, n_1<d}S_{m_1}({\mathfrak a};{\boldsymbol \epsilon})S_{n_1}({\mathfrak b};{\boldsymbol \lambda})\\
 &=\sum_{0\leq n_1<m_1<d }S_{m_1}({\mathfrak a};{\boldsymbol \epsilon})S_{n_1}({\mathfrak b};{\boldsymbol \lambda})+\sum_{0\leq m_1<n_1<d}S_{m_1}({\mathfrak a};{\boldsymbol \epsilon})S_{n_1}({\mathfrak b};{\boldsymbol \lambda})\\
    &\quad+\sum_{0\leq n_1=m_1<d}S_{m_1}({\mathfrak a};{\boldsymbol \epsilon})S_{n_1}({\mathfrak b};{\boldsymbol \lambda})\\
	&=\sum_{0\leq m_1<d}\biggl(S_{m_1}(a_1;\epsilon_1)S_{<m_1}({\mathfrak a}';{\boldsymbol \epsilon}')\sum_{0\leq n_1<m_1}S_{n_1}({\mathfrak b};{\boldsymbol \lambda})\biggr)\\
	&\quad+\sum_{0\leq n_1<d}\biggl(S_{n_1}(b_1;\lambda_1)S_{<n_1}({\mathfrak b}';{\boldsymbol \lambda}')\sum_{0\leq m_1<n_1}S_{m_1}({\mathfrak a};{\boldsymbol \epsilon})\biggr)\\
	&\quad+\sum_{0\leq m_1=n_1<d}S_{m_1}(a_1;\epsilon_1)S_{<m_1}({\mathfrak a}';{\boldsymbol \epsilon}')S_{n_1}(b_1;\lambda_1)S_{<n_1}({\mathfrak b}';{\boldsymbol \lambda}')\\
    &=\sum_{0\leq m_1<d}S_{m_1}(a_1;\epsilon_1)S_{<m_1}({\mathfrak a}';{\boldsymbol \epsilon}')S_{<m_1}({\mathfrak b};{\boldsymbol \lambda})\\
	&\quad+\sum_{0\leq n_1<d}S_{n_1}(b_1;\lambda_1)S_{<n_1}({\mathfrak b}';{\boldsymbol \lambda}')S_{<n_1}({\mathfrak a};{\boldsymbol \epsilon})\\
	&\quad+\sum_{0\leq m_1<d}S_{m_1}(a_1;\epsilon_1)S_{<m_1}({\mathfrak a}';{\boldsymbol \epsilon}')S_{m_1}(b_1;\lambda_1)S_{<m_1}({\mathfrak b}';{\boldsymbol \lambda}')\\
\intertext{by applying induction hypothesis to the first, second series of power sums and applying Lemma \ref{hl1} to the third series in the right hand side of last equality, we have}
  \end{align*}
	\begin{align*} 
    &=\sum_{0\leq m_1<d}S_{m_1}(a_1;\epsilon_1)\sum_{i} {f_i}S_{<m_1}(c_{i1}, \ldots, c_{il_{i}}; \mu_{i1}, \ldots, \mu_{il_{i}})\\
	&\quad+\sum_{0\leq n_1<d}S_{n_1}(b_1;\lambda_1)\sum_{i} {f'_i}S_{<n_1}(c'_{i1}, \ldots, c'_{il_{i}}; \mu'_{i1}, \ldots, \mu'_{il_{i}})\\
	&\quad+\sum_{0\leq m_1<d}\Biggl(\sum_{\substack{0<j<a_1+b_1\\q-1|j}}\Delta^{j}_{a_1, b_1}S_{m_1}(a_1+b_1-j, j; \epsilon_1\lambda_1, 1)+S_{m_1}(a_1+b_1; \epsilon_1\lambda_1)\Biggr)\\
	&\qquad \qquad \qquad\cdot S_{<m_1}({\mathfrak a}';{\boldsymbol \epsilon}')S_{<m_1}({\mathfrak b}';{\boldsymbol \lambda}')\\
    &=\sum_{0\leq m_1<d}S_{m_1}(a_1;\epsilon_1)\sum_{i} {f_i}S_{<m_1}(c_{i1}, \ldots, c_{il_{i}}; \mu_{i1}, \ldots, \mu_{il_{i}})\\
	&\quad+\sum_{0\leq n_1<d}S_{n_1}(b_1;\lambda_1)\sum_{i} {f'_i}S_{<n_1}(c'_{i1}, \ldots, c'_{il_{i}}; \mu'_{i1}, \ldots, \mu'_{il_{i}})\\
	&\quad+\sum_{0\leq m_1<d}\Biggl(\sum_{\substack{0<j<a_1+b_1\\q-1|j}}\Delta^{j}_{a_1, b_1}S_{m_1}(a_1+b_1-j; \epsilon_1\lambda_1)S_{<m_1}(j;1)+S_{m_1}(a_1+b_1; \epsilon_1\lambda_1)\Biggr)\\
	&\qquad \qquad \qquad\cdot S_{<m_1}({\mathfrak a}';{\boldsymbol \epsilon}')S_{<m_1}({\mathfrak b}';{\boldsymbol \lambda}').	
	\intertext{Again by applying induction hypothesis for $ S_{<m_1}({\mathfrak a}';{\boldsymbol \epsilon}')S_{<m_1}({\mathfrak b}';{\boldsymbol \lambda}')$, the above quantity equals }
	&=\sum_{0\leq m_1<d}\sum_{i}{f_i}S_{m_1}(a_1, c_{i1}, \ldots, c_{il_{i}};\epsilon_1, \mu_{i1}, \ldots, \mu_{il_{i}})\\
	&\quad+\sum_{0\leq n_1<d}\sum_{i}{f'_i}S_{n_1}(b_1, c'_{i1}, \ldots, c'_{il_{i}};\lambda_1, \mu'_{i1}, \ldots, \mu'_{il_{i}})\\
	&\quad+\sum_{0\leq m_1<d}\sum_{\substack{0<j<a_1+b_1\\q-1|j}}\Delta^{j}_{a_1, b_1}\sum_{i}g_iS_{m_1}(a_1+b_1-j, e_{i1}, \ldots, e_{il_i}; \epsilon_1\lambda_1, \eta_{i1}, \ldots, \eta_{il_i})\\
	&+\sum_{0 \leq m_1<d}\sum_{i}g'_iS_{m_1}(a_1+b_1, e_{i1}', \ldots, e_{il_i}'; \epsilon_1\lambda_1,  \eta_{i1}', \ldots, \eta_{il_i}')
\end{align*}
for some $g_i$, $g_i'\in\mathbb{F}_p$. 
Therefore the product $S_{<d}({\mathfrak a};{\boldsymbol \epsilon})S_{<d}({\mathfrak b};{\boldsymbol \lambda})$ is expressed by an $\mathbb{F}_p$-linear combination of $S_{<d}(- ;-)$ with the desired conditions. 
\end{proof}
To prove the sum-shuffle result in Theorem \ref{shamz} the following is the key ingredient.
\begin{thm}\label{powsha}
For ${\mathfrak a } :=(a_1, a_2, \ldots, a_r)\in\mathbb{N}^{r}$, ${\mathfrak b}:=(b_1, b_2, \ldots, b_s)\in\mathbb{N}^{s}$, ${\boldsymbol \epsilon}:=(\epsilon_1, \epsilon_2, \ldots, \epsilon_r)\in(\mathbb{F}_q^{\times})^{r}$ and ${\boldsymbol \lambda}:=(\lambda_1, \lambda_2, \ldots, \lambda_s)\in(\mathbb{F}_q^{\times})^{s}$, we may express the product $S_d({\mathfrak a }; {\boldsymbol \epsilon})S_d({\mathfrak b} ;{\boldsymbol \lambda})$ as follows:
\begin{align}\label{shamzv}
	S_{d}({\mathfrak a};{\boldsymbol \epsilon})S_{d}({\mathfrak b};{\boldsymbol \lambda})=\sum_{i} f'_iS_d(c_{i1}, \ldots, c_{il_{i}}; \mu_{i1}, \ldots, \mu_{il_{i}})
\end{align}
for some $c_{ij}\in\mathbb{N}$, $\mu_{ij}\in\mathbb{F}_q^{\times}$ so that $\sum_{m=1}^{r}a_m+\sum_{n=1}^{s}b_n=\sum_{h=1}^{l_i}c_{ih}$, $\prod_{m=1}^{r}\epsilon_m\prod_{n=1}^{s}\lambda_n=\prod_{h=1}^{l_i}\mu_{ih}$, $l_i\leq r+s$ and $f'_i\in\mathbb{F}_p$ for each $i$. 
\end{thm}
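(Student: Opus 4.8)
The plan is to reduce everything to the two results already proved: Lemma \ref{hl1} (the depth-one shuffle for $S_d(s;\epsilon)$) and Lemma \ref{lempowless} (the shuffle for the truncated sums $S_{<d}(\mathfrak{s};\boldsymbol{\epsilon})$), by peeling off the first component of each of the two indices. First I would use \eqref{extpowsum} to write $S_d(\mathfrak{a};\boldsymbol{\epsilon})=S_d(a_1;\epsilon_1)\,S_{<d}(\mathfrak{a}';\boldsymbol{\epsilon}')$ and likewise for $\mathfrak{b}$, and rearrange
\[
S_d(\mathfrak{a};\boldsymbol{\epsilon})\,S_d(\mathfrak{b};\boldsymbol{\lambda})=\bigl(S_d(a_1;\epsilon_1)\,S_d(b_1;\lambda_1)\bigr)\cdot\bigl(S_{<d}(\mathfrak{a}';\boldsymbol{\epsilon}')\,S_{<d}(\mathfrak{b}';\boldsymbol{\lambda}')\bigr).
\]
To the first factor I apply Lemma \ref{hl1}, expanding it as $S_d(a_1+b_1;\epsilon_1\lambda_1)$ plus a sum over $0<j<a_1+b_1$, $(q-1)\mid j$, of $\Delta^{j}_{a_1,b_1}\,S_d(a_1+b_1-j;\epsilon_1\lambda_1)\,S_{<d}(j;1)$, using $S_d(u,v;\mu,1)=S_d(u;\mu)S_{<d}(v;1)$ from \eqref{extpowsum}. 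To the second factor I apply Lemma \ref{lempowless}, obtaining an $\mathbb{F}_p$-linear combination $\sum_i f_i\,S_{<d}(\mathfrak{c}_i;\boldsymbol{\mu}_i)$ in which every term has weight $\mathrm{wt}(\mathfrak{a}')+\mathrm{wt}(\mathfrak{b}')$, sign-product $\prod\boldsymbol{\epsilon}'\cdot\prod\boldsymbol{\lambda}'$, depth $\le (r-1)+(s-1)$, and coefficients $f_i\in\mathbb{F}_p$ independent of $d$.

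Next I multiply the two expansions out. The contributions from $S_d(a_1+b_1;\epsilon_1\lambda_1)\cdot\sum_i f_i S_{<d}(\mathfrak{c}_i;\boldsymbol{\mu}_i)$ are already in the required shape, namely $\sum_i f_i\,S_d(a_1+b_1,\mathfrak{c}_i;\epsilon_1\lambda_1,\boldsymbol{\mu}_i)$. For the remaining contributions, for each admissible $j$ I am left with $\Delta^{j}_{a_1,b_1}\,S_d(a_1+b_1-j;\epsilon_1\lambda_1)\cdot S_{<d}(j;1)\cdot\sum_i f_i S_{<d}(\mathfrak{c}_i;\boldsymbol{\mu}_i)$; here I apply Lemma \ref{lempowless} a \emph{second} time to each product $S_{<d}(j;1)\,S_{<d}(\mathfrak{c}_i;\boldsymbol{\mu}_i)$, writing it as $\sum_k g_{ik}\,S_{<d}(\mathfrak{e}_{ik};\boldsymbol{\eta}_{ik})$ with weight $j+\mathrm{wt}(\mathfrak{c}_i)$, sign-product $\prod\boldsymbol{\mu}_i$, depth $\le 1+(\text{depth of }\mathfrak{c}_i)$, and $g_{ik}\in\mathbb{F}_p$ independent of $d$. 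Re-attaching the head via \eqref{extpowsum} gives $S_d(a_1+b_1-j,\mathfrak{e}_{ik};\epsilon_1\lambda_1,\boldsymbol{\eta}_{ik})$, again of the required shape. The degenerate cases $r=1$ or $s=1$, where $\mathfrak{a}'$ or $\mathfrak{b}'$ is empty and the corresponding $S_{<d}$ is $1$ by convention, are covered by the same computation (and for $r=s=1$ the assertion is literally Lemma \ref{hl1}).

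It then remains to check the three constraints on each resulting term. Weight is additive along concatenation, so in the first family it is $(a_1+b_1)+\mathrm{wt}(\mathfrak{a}')+\mathrm{wt}(\mathfrak{b}')=\mathrm{wt}(\mathfrak{a})+\mathrm{wt}(\mathfrak{b})$, and in the second family $(a_1+b_1-j)+\bigl(j+\mathrm{wt}(\mathfrak{c}_i)\bigr)=(a_1+b_1)+\mathrm{wt}(\mathfrak{a}')+\mathrm{wt}(\mathfrak{b}')$, the same value. The sign-product is multiplicative along concatenation: in the first family $\epsilon_1\lambda_1\cdot\prod\boldsymbol{\epsilon}'\prod\boldsymbol{\lambda}'=\prod\boldsymbol{\epsilon}\prod\boldsymbol{\lambda}$, and in the second the auxiliary slot $S_{<d}(j;1)$ contributes only a factor $1$, so again $\epsilon_1\lambda_1\cdot 1\cdot\prod\boldsymbol{\epsilon}'\prod\boldsymbol{\lambda}'=\prod\boldsymbol{\epsilon}\prod\boldsymbol{\lambda}$. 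For the depth, the first family has depth $\le 1+(r-1)+(s-1)=r+s-1\le r+s$, and the second has depth $\le 1+\bigl(1+(r-1)+(s-1)\bigr)=r+s$. Finally every coefficient is a product of some $f_i,g_{ik}\in\mathbb{F}_p$ and an integer $\Delta^{j}_{a_1,b_1}$ reduced mod $p$, hence lies in $\mathbb{F}_p$; since all of these are independent of $d$, so are the resulting $f'_i$.

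The only real subtlety I anticipate is the depth bookkeeping: introducing the extra slot $S_{<d}(j;1)$ looks like it might push the depth past $r+s$, but this slot is exactly compensated by the two heads $a_1,b_1$ merging into the single entry $a_1+b_1-j$, so even after applying Lemma \ref{lempowless} twice the bound $l_i\le r+s$ is preserved. Everything else is routine tracking of weights and signs.
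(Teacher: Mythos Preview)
Your proposal is correct and follows essentially the same route as the paper: decompose via \eqref{extpowsum}, apply Lemma~\ref{hl1} to $S_d(a_1;\epsilon_1)S_d(b_1;\lambda_1)$ and Lemma~\ref{lempowless} to $S_{<d}(\mathfrak{a}';\boldsymbol{\epsilon}')S_{<d}(\mathfrak{b}';\boldsymbol{\lambda}')$, then apply Lemma~\ref{lempowless} once more to the resulting products $S_{<d}(j;1)\,S_{<d}(\mathfrak{c}_i;\boldsymbol{\mu}_i)$ and reassemble via \eqref{extpowsum}. Your explicit bookkeeping of weight, sign-product, depth, and $d$-independence of coefficients is in fact more detailed than the paper's exposition.
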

\begin{proof}
From the decomposition which is described in \eqref{extpowsum}, we have
\[
S_d({\mathfrak a};{\boldsymbol \epsilon})S_d({\mathfrak b};{\boldsymbol\lambda})=S_d(a_1;\epsilon_1)S_{<d}({\mathfrak a}';{\boldsymbol \epsilon}')S_d(b_1;\lambda_1)S_{<d}({\mathfrak b}';{\boldsymbol \lambda}').
\]
\begin{align*}
\intertext{By using the equation \eqref{lempow} to the product $S_{<d}({\mathfrak a}';{\boldsymbol \epsilon}')S_{<d}({\mathfrak b}';{\boldsymbol \lambda}')$, we have}
S_d({\mathfrak a};{\boldsymbol \epsilon})S_{d}({\mathfrak b};{\boldsymbol \lambda})&=S_d(a_1;\epsilon_1)S_d(b_1;\lambda_1)\Biggl(\sum_{i} {f_i}S_{<d}(c_{i1}, \ldots, c_{i{l_{i}}}; \mu_{i1}, \ldots, \mu_{i{l_{i}}})\Biggr)
\intertext{Using the equation \eqref{lempowsh}, the quantity above is equal to}
&=\sum_{\substack{0<j<a_1+b_1\\q-1|j}}\Biggl(\Delta^{j}_{a_1, b_1}S_{d}(a_1+b_1-j, j; \epsilon_1\lambda_1, 1)+S_{d}(a_1+b_1; \epsilon_1\lambda_1)\Biggr)\\
&\qquad \cdot\Biggl(\sum_{i} {f_i}S_{<d}(c_{i1}, \ldots, c_{i{l_{i}}}; \mu_{i1}, \ldots, \mu_{i{l_{i}}})\Biggr)\\
&=\sum_{\substack{0<j<a_1+b_1\\q-1|j}}\Biggl(\Delta^{j}_{a_1, b_1}S_{d}(a_1+b_1-j;\epsilon_1\lambda_1)S_{<d}(j;1)+S_{d}(a_1+b_1; \epsilon_1\lambda_1)\Biggr)\\
&\qquad \cdot\Biggl(\sum_{i} {f_i}S_{<d}(c_{i1}, \ldots, c_{i{l_{i}}}; \mu_{i1}, \ldots, \mu_{i{l_{i}}})\Biggr)\\
&=\sum_{\substack{0<j<a_1+b_1\\q-1|j}}\sum_{i} {f_i}\Delta^{j}_{a_1, b_1}S_{d}(a_1+b_1-j;\epsilon_1\lambda_1)\\
&\qquad\cdot S_{<d}(j;1)S_{<d}(c_{i1}, \ldots, c_{i{l_{i}}}; \mu_{i1}, \ldots, \mu_{i{l_{i}}})\\
&\quad +\sum_{\substack{0<j<a_1+b_1\\q-1|j}}\sum_{i} {f_i}S_{d}(a_1+b_1; \epsilon_1\lambda_1)S_{<d}(c_{i1}, \ldots, c_{i{l_{i}}}; \mu_{i1}, \ldots, \mu_{i{l_{i}}}).\\
\intertext{Applying \eqref{lempow} to the product $S_{<d}(j;1)S_{<d}(c_{i1}, \ldots, c_{i{l_{i}}}; \mu_{i1}, \ldots, \mu_{i{l_{i}}})$, we have}
S_d({\mathfrak a};{\boldsymbol \epsilon})S_{d}({\mathfrak b};{\boldsymbol \lambda})&=\sum_{\substack{0<j<a_1+b_1\\q-1|j}}\sum_{i} {f_i}\Delta^{j}_{a_1, b_1}S_{d}(a_1+b_1-j;\epsilon_1\lambda_1)\\
&\qquad\cdot\Biggl(\sum_{m}g_mS_{<d}(e_{m1}, \ldots, e_{m{n_{m}}}; \nu_{m1}, \ldots, \nu_{m{n_{m}}})\Biggr)\\
&\quad +\sum_{\substack{0<j<a_1+b_1\\q-1|j}}\sum_{i} {f_i}S_{d}(a_1+b_1; \epsilon_1\lambda_1)S_{<d}(c_{i1}, \ldots, c_{i{l_{i}}}; \mu_{i1}, \ldots, \mu_{i{l_{i}}})\\
&=\sum_{\substack{0<j<a_1+b_1\\q-1|j}}\sum_{i, m} f_ig_m\Delta^{j}_{a_1, b_1}S_{d}(a_1+b_1-j, e_{m1}, \ldots, e_{m{n_{m}}};\epsilon_1\lambda_1, \nu_{m1}, \ldots, \nu_{m{n_{m}}})\\
&\quad +\sum_{\substack{0<j<a_1+b_1\\q-1|j}}\sum_{i}{f_i}S_{d}(a_1+b_1, c_{i1}, \ldots, c_{i{l_{i}}};\epsilon_1\lambda_1, \mu_{i1}, \ldots, \mu_{i{l_{i}}}).
\end{align*}
Thus we show that $S_{d}({\mathfrak a};{\boldsymbol \epsilon})S_{d}({\mathfrak b};{\boldsymbol \lambda})$ is expressed by  with the desired formulation.
\end{proof}

\begin{rem}
In Theorem \ref{powsha}, coefficients $f_i\in\mathbb{F}_p$ are independent of $d$ by Lamma \ref{lempowless}.
\end{rem}

The main result on sum-shuffle relations for AMZVs are the following.
\begin{thm}\label{shamz}
For ${\mathfrak a } :=(a_1, a_2, \ldots, a_r)\in\mathbb{N}^{r}$, ${\mathfrak b}:=(b_1, b_2, \ldots, b_s)\in\mathbb{N}^{s}$, ${\boldsymbol \epsilon}:=(\epsilon_1, \epsilon_2, \ldots, \epsilon_r)\in(\mathbb{F}_q^{\times})^{r}$ and ${\boldsymbol \lambda}:=(\lambda_1, \lambda_2, \ldots, \lambda_s)\in(\mathbb{F}_q^{\times})^{s}$, we may express the product $\zeta_A({\mathfrak a};{\boldsymbol \epsilon})\zeta_A({\mathfrak b};{\boldsymbol \lambda})$ as follows:
\begin{align}\label{shaamzv}
	\zeta_A({\mathfrak a};{\boldsymbol \epsilon})\zeta_A({\mathfrak b};{\boldsymbol \lambda})=\sum_{i} f''_i\zeta_A(c_{i1}, \ldots, c_{il_{i}}; \mu_{i1}, \ldots, \mu_{il_{i}})
\end{align}
for some $c_{ij}\in\mathbb{N}$ and $\mu_{ij}\in\mathbb{F}_q^{\times}$ so that $\sum_{m=1}^{r}a_m+\sum_{n=1}^{s}b_n=\sum_{h=1}^{l_i}c_{ih}$, $\prod_{m=1}^{r}\epsilon_m\prod_{n=1}^{s}\lambda_n=\prod_{h=1}^{l_i}\mu_{ih}$, $l_i\leq r+s$ and $f''_i\in\mathbb{F}_p$ for each $i$. 
\end{thm}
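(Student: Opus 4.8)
The plan is to reduce everything to Theorem \ref{powsha} and Lemma \ref{lempowless} via the series expansion \eqref{chpes}. Writing $\zeta_A(\mathfrak{a};\boldsymbol{\epsilon})=\sum_{d_1\geq 0}S_{d_1}(\mathfrak{a};\boldsymbol{\epsilon})$ and $\zeta_A(\mathfrak{b};\boldsymbol{\lambda})=\sum_{e_1\geq 0}S_{e_1}(\mathfrak{b};\boldsymbol{\lambda})$ — both convergent in $k_\infty$ because $\deg_\theta S_{d+1}(k)<\deg_\theta S_d(k)$ as recalled in the proof of Theorem \ref{nonvan} — the product becomes the double sum $\sum_{d_1,e_1\geq 0}S_{d_1}(\mathfrak{a};\boldsymbol{\epsilon})S_{e_1}(\mathfrak{b};\boldsymbol{\lambda})$, which I split according to $d_1>e_1$, $d_1<e_1$, and $d_1=e_1$. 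The non-archimedean topology of $k_\infty$ together with the decay of the $S_d$ justifies this rearrangement.

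On the diagonal $d_1=e_1=d$, Theorem \ref{powsha} immediately rewrites $S_d(\mathfrak{a};\boldsymbol{\epsilon})S_d(\mathfrak{b};\boldsymbol{\lambda})$ as an $\mathbb{F}_p$-linear combination $\sum_i f'_i S_d(c_{i1},\ldots,c_{il_i};\mu_{i1},\ldots,\mu_{il_i})$ with coefficients $f'_i$ independent of $d$, with $\sum_{m}a_m+\sum_{n}b_n=\sum_{h}c_{ih}$, $\prod_m\epsilon_m\prod_n\lambda_n=\prod_h\mu_{ih}$, and $l_i\leq r+s$; summing over $d\geq 0$ and pulling the finite sum over $i$ outside produces $\sum_i f'_i\zeta_A(c_{i1},\ldots,c_{il_i};\mu_{i1},\ldots,\mu_{il_i})$ by \eqref{chpes}.

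For the region $d_1>e_1$, I use the decomposition \eqref{extpowsum}: since $\sum_{0\leq e_1<d_1}S_{e_1}(\mathfrak{b};\boldsymbol{\lambda})=S_{<d_1}(\mathfrak{b};\boldsymbol{\lambda})$, one has $\sum_{d_1>e_1\geq 0}S_{d_1}(\mathfrak{a};\boldsymbol{\epsilon})S_{e_1}(\mathfrak{b};\boldsymbol{\lambda})=\sum_{d_1\geq 0}S_{d_1}(a_1;\epsilon_1)S_{<d_1}(\mathfrak{a}';\boldsymbol{\epsilon}')S_{<d_1}(\mathfrak{b};\boldsymbol{\lambda})$. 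Lemma \ref{lempowless} turns the inner product $S_{<d_1}(\mathfrak{a}';\boldsymbol{\epsilon}')S_{<d_1}(\mathfrak{b};\boldsymbol{\lambda})$ into $\sum_i f_i S_{<d_1}(\ldots)$ with the analogous bookkeeping on weight, unit-product and depth ($\leq (r-1)+s$), and $f_i\in\mathbb{F}_p$ independent of $d_1$; prepending $S_{d_1}(a_1;\epsilon_1)$ via \eqref{extpowsum} and summing over $d_1$ gives, again by \eqref{chpes}, a combination of $\zeta_A(a_1,\ldots;\epsilon_1,\ldots)$ of weight ${\rm wt}(\mathfrak{a})+{\rm wt}(\mathfrak{b})$, unit-product $\prod_m\epsilon_m\prod_n\lambda_n$, and depth $\leq 1+(r-1+s)=r+s$. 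The region $d_1<e_1$ is handled symmetrically by exchanging $(\mathfrak{a},\boldsymbol{\epsilon})$ and $(\mathfrak{b},\boldsymbol{\lambda})$. Collecting the three pieces and merging the finitely many $\mathbb{F}_p$-coefficients yields \eqref{shaamzv}; the degenerate low-depth cases (e.g. $r=1$ or $s=1$, where $\mathfrak{a}'=\phi$ and $S_{<d}(\mathfrak{a}';\boldsymbol{\epsilon}')=1$) follow the same lines or reduce directly to Lemma \ref{hl1}.

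I do not expect a serious obstacle here: the genuine computational content — the Chen-type partial-fraction identity and the induction on depth — has already been carried out in Lemma \ref{hl1}, Lemma \ref{lempowless} and Theorem \ref{powsha}. The only points that need a little care are (i) justifying the splitting and interchange of the infinite sums, which is immediate from completeness of $k_\infty$ and the strict decrease of $\deg_\theta S_d(k)$ in $d$, and (ii) verifying that weight, the product of the units, and the depth bound are preserved at each step — which they are, since each of the three cited results preserves them and prepending the single slot $(a_1;\epsilon_1)$ adds exactly $a_1$ to the weight, multiplies the unit-product by $\epsilon_1$, and raises the depth by one.
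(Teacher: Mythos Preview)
Your proposal is correct and follows essentially the same route as the paper's proof: both split the double sum $\sum_{d_1,e_1\geq 0}S_{d_1}(\mathfrak{a};\boldsymbol{\epsilon})S_{e_1}(\mathfrak{b};\boldsymbol{\lambda})$ into the diagonal and the two off-diagonal pieces, invoke Theorem~\ref{powsha} on the diagonal, and handle each off-diagonal part via the factorization \eqref{extpowsum} together with Lemma~\ref{lempowless}. The only cosmetic difference is that the paper writes the identity as ``diagonal $=$ product $-$ off-diagonals'' and then rearranges, whereas you write ``product $=$ diagonal $+$ off-diagonals'' directly; the ingredients and the verification of the weight, unit-product and depth constraints are identical.
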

\begin{proof}
By the set theoretical inclusion-exclusion principle, we can express $\sum_{d\geq0}S_{d}({\mathfrak a};{\boldsymbol \epsilon})S_{d}({\mathfrak b};{\boldsymbol \lambda})$ as follows:
\begin{align*}
	&\sum_{d\geq0}S_{d}({\mathfrak a};{\boldsymbol \epsilon})S_{d}({\mathfrak b};{\boldsymbol \lambda})\\
	&=\Biggl(\sum_{d_1\geq 0}S_{d_1}({\mathfrak a};{\boldsymbol \epsilon})\Biggr)\Biggl(\sum_{e_1\geq0}S_{e_1}({\mathfrak b};{\boldsymbol \lambda})\Biggr)-\sum_{d_1>e_1\geq0}S_{d_1}({\mathfrak a};{\boldsymbol \epsilon})S_{e_1}({\mathfrak b};{\boldsymbol \lambda})-\sum_{e_1>d_1\geq0}S_{e_1}({\mathfrak b};{\boldsymbol \lambda})S_{d_1}({\mathfrak a};{\boldsymbol \epsilon})\\
	&=\Biggl(\sum_{d_1\geq 0}S_{d_1}({\mathfrak a};{\boldsymbol \epsilon})\Biggr)\Biggl(\sum_{e_1\geq0}S_{e_1}({\mathfrak b};{\boldsymbol \lambda})\Biggr)-\sum_{d_1>0}S_{d_1}({\mathfrak a};{\boldsymbol \epsilon})S_{<d_1}({\mathfrak b};{\boldsymbol \lambda})-\sum_{e_1>0}S_{e_1}({\mathfrak b};{\boldsymbol \lambda})S_{<e_1}({\mathfrak a};{\boldsymbol \epsilon})\\
    &=\Biggl(\sum_{d_1\geq 0}S_{d_1}({\mathfrak a};{\boldsymbol \epsilon})\Biggr)\Biggl(\sum_{e_1\geq0}S_{e_1}({\mathfrak b};{\boldsymbol \lambda})\Biggr)\\
    &\quad-\sum_{d_1>0}S_{d_1}(a_1;\epsilon_1)S_{<d_1}({\mathfrak a}';{\boldsymbol \epsilon}')S_{<d_1}({\mathfrak b};{\boldsymbol \lambda})-\sum_{e_1>0}S_{e_1}(b_1;\lambda_1)S_{<e_1}({\mathfrak b}';{\boldsymbol \lambda}')S_{<e_1}({\mathfrak a};{\boldsymbol \epsilon}).
    \intertext{By \eqref{lempow}, the above is equal to}
    &=\Biggl(\sum_{d_1\geq 0}S_{d_1}({\mathfrak a};{\boldsymbol \epsilon})\Biggr)\Biggl(\sum_{e_1\geq0}S_{e_1}({\mathfrak b};{\boldsymbol \lambda})\Biggr)-\sum_{d_1>0}\sum_{i} {g_i}S_{d_1}(a_1, m_{i1}, \ldots, m_{i{l_{i}}}; \epsilon_1, \mu_{i1}, \ldots, \mu_{i{l_{i}}})\\
    &\hspace{5.13cm}  -\sum_{e_1>0}\sum_{i} {h_i}S_{e_1}(b_1, n_{i1}, \ldots, n_{i{l_{i}}};\lambda_1, \eta_{i1}, \ldots, \eta_{i{l_{i}}}).
\end{align*}
for some $g_i$, $h_i\in\mathbb{F}_p$. Combining the expression of $S_d({\mathfrak a};{\boldsymbol \epsilon})S_d({\mathfrak b};{\boldsymbol \lambda} )$ in \eqref{shamzv} with the above, we have the following identity
\begin{align*}
\sum_{d\geq0}\sum_{i} f'_iS_d(c_{i1}, \ldots, c_{il_{i}}; \mu_{i1}, \ldots, \mu_{il_{i}})&=\sum_{d_1\geq 0}S_{d_1}({\mathfrak a};{\boldsymbol \epsilon})\sum_{e_1\geq0}S_{e_1}({\mathfrak b};{\boldsymbol \lambda})\\
    &\ -\sum_{d_1>0}\sum_{i} {g_i}S_{d_1}(a_1, m_{i1}, \ldots, m_{i{l_{i}}}; \epsilon_1, \mu_{i1}, \ldots, \mu_{i{l_{i}}})\\
    &\ -\sum_{e_1>0}\sum_{i} {h_i}S_{e_1}(b_1, n_{i1}, \ldots, n_{i{l_{i}}};\lambda_1, \eta_{i1}, \ldots, \eta_{i{l_{i}}}).
\end{align*}
The coefficients $f'_i$, $g_i$, $h_i$ are independent of $d$ by Theorem \ref{powsha}. Thus by \eqref{chpes}, we obtain 
\begin{align*}
\sum_{i} f'_i\zeta_A(c_{i1}, \ldots, c_{il_{i}}; \mu_{i1}, \ldots, \mu_{il_{i}})&=\zeta_A({\mathfrak a};{\boldsymbol \epsilon})\zeta_A({\mathfrak b};{\boldsymbol \lambda})\\
    &\ -\sum_{i} {g_i}\zeta_A(a_1, m_{i1}, \ldots, m_{i{l_{i}}}; \epsilon_1, \mu_{i1}, \ldots, \mu_{i{l_{i}}})\\
    &\ -\sum_{i} {h_i}\zeta_A(b_1, n_{i1}, \ldots, n_{i{l_{i}}};\lambda_1, \eta_{i1}, \ldots, \eta_{i{l_{i}}}).
\end{align*}
Thus we have 
\begin{align*}
\zeta_A({\mathfrak a};{\boldsymbol \epsilon})\zeta_A({\mathfrak b};{\boldsymbol \lambda})&=\sum_{i} f'_i\zeta_A(c_{i1}, \ldots, c_{il_{i}}; \mu_{i1}, \ldots, \mu_{il_{i}})\\
    &\quad +\sum_{i} {g_i}\zeta_A(a_1, m_{i1}, \ldots, m_{i{l_{i}}}; \epsilon_1, \mu_{i1}, \ldots, \mu_{i{l_{i}}})\\
    &\quad +\sum_{i} {h_i}\zeta_A(b_1, n_{i1}, \ldots, n_{i{l_{i}}};\lambda_1, \eta_{i1}, \ldots, \eta_{i{l_{i}}}).
\end{align*}
By Lemma \ref{lempowless} and Theorem \ref{powsha}, the indices and coefficients in the above equation satisfy desired conditions. 

Therefore we obtain the sum-shuffle relation for $\zeta_A({\mathfrak s}; {\boldsymbol \epsilon})$.
\end{proof}
By Theorem \ref{shamz}, the $\mathbb{F}_p$-linear span of our AMZVs form an $\mathbb{F}_p$-algebra. 
\begin{eg}
For some $q$, $({\mathfrak a};{\boldsymbol \epsilon})$ and $({\mathfrak b};{\boldsymbol \lambda})$, the product $\zeta_A({\mathfrak a};{\boldsymbol \epsilon})\zeta_A({\mathfrak b};{\boldsymbol \lambda})$ is explicitly computed as follows.  

When $q=3$, $({\mathfrak a};{\boldsymbol \epsilon})=(2;1)$ and $({\mathfrak b};{\boldsymbol \lambda})=(1,2;2,2)$,
\begin{align*}
\zeta_A(2;1)\zeta_A(1,2;2,2)&=\zeta_A(3,2;2,2)-\zeta_A(1,2,2;2,2,1)+\zeta_A(1,2,2;2,1,2)\\
&\quad +\zeta_A(1,4;2,2)+\zeta_A(2,1,2;1,2,2)
\end{align*}
When $q=5$, $({\mathfrak a};{\boldsymbol \epsilon})=(2;3)$ and $({\mathfrak b};{\boldsymbol \lambda})=(3;1)$,
\begin{align*}
\zeta_A(2;3)\zeta_A(3;1)=\zeta_A(5;3)+\zeta_A(2,3;3,1)+\zeta_A(3,2;1,3).
\end{align*}
\end{eg}

Later we give an explicit sum-shuffle relation for a product of any depth 2 AMZV and depth 1 AMZV in Appendix.

\section{Period interpretation of AMZVs}\label{secperint}
In this section, we show that each $\zeta_A({\mathfrak s};{\boldsymbol \epsilon})$ appears as a period of certain pre-$t$-motive basing on the idea in \cite{AT1}. 

We denote the ring $\overline{k}(t)[\sigma, \sigma^{-1}]$ the non-commutative Laurent polynomial ring in $\sigma$ with coefficients in $\overline{k}(t)$ subject to the following relations,
\[
 \sigma f=f^{(-1)}\sigma, \quad f\in\overline{k}(t).
\]

We denote $\mathbb{E}$ to be the ring consisting of formal power series 
\[
    \sum_{n=0}^{\infty}a_nt^n\in\overline{k}[[t]]
\]
such that 
\[
    \lim_{n\rightarrow \infty}\sqrt[n]{|a_n|_{\infty}}=0,\quad [k_{\infty}(a_0, a_1, \ldots):k_{\infty}]<\infty.  
\]
We note that the former condition guarantees that such a series is an entire function, that is, it has an infinite radius of convergence with respect to the absolute value $|\cdot|_{\infty}$ thus $\mathbb{E}\subset\mathbb{T}$. The latter condition guarantees that for any $t_0\in\overline{k_{\infty}}$ the value of such a series at $t=t_0$ belongs again to $\overline{k_{\infty}}$.
We note that
\begin{align}\label{omegaentire}
	\Omega\in\mathbb{E}.
\end{align}
Therefore $\Omega\in\mathbb{T}$.

\subsection{Review of pre-t-motive}
First we recall the notions of pre-$t$-motives.   
For more detail, see \cite{Pa}.
\begin{defn}[{\cite[\S 3.2.1]{Pa}}]
A {\it pre-$t$-motive} $M$ is a left $\overline{k}(t)[\sigma, \sigma^{-1}]$-module that is finite dimensional over $\overline{k}(t)$.
\end{defn}

Let $M$ be a pre-$t$-motive dimension $r$ over $\overline{k}(t)$ and $\Phi$ be representing matrix of multiplication by $\sigma$ on $M$ with respect to a given basis of {\bf m} of $M$, then $M$ is rigid analytically trivial if and only if there is a matrix $\Psi\in {\rm GL}_r(\mathbb{L})$ ($\mathbb{L}$ is a quotient field of $\mathbb{T}$) satisfying
\[
	\Psi^{(-1)}=\Phi\Psi.
\]
Here we define $\Psi^{(-1)}$ by $(\Psi^{(-1)})_{ij}:=(\Psi_{ij})^{(-1)}$.
Such matrix $\Psi$ is called {\it rigid analytic trivialization} of $\Phi$ and if all the entries of $\Psi$ are convergent at $t=\theta$, and entries of $\Psi|_{t=\theta}$ are called {\it periods} of $M$.

Anderson and Thakur obtained period interpretation of MZVs in \cite{AT1}, that is, they showed that MZVs appear as entries of $\Psi|_{t=\theta}$ for $\Psi$ which is a rigid analytic trivialization of the rigid analytically trivial pre-$t$-motive with dimension $r+1$ over $\overline{k}(t)$ multiplication by $\sigma$ is represented by the following matrix $\Phi$:
\begin{align*}
\Phi:=
\begin{array}{rccccccll}
\ldelim({5.5}{4pt}[] & (t-\theta)^{s_1+\cdots+s_r} & 0 & 0 & \cdots & 0 & \rdelim){5.5}{4pt}[] &\\
&(t-\theta)^{s_1+\cdots+s_r}H_{s_1-1}^{(-1)} & (t-\theta)^{s_2+\cdots+s_r} & 0 & \cdots & 0 & &\\
& 0 &(t-\theta)^{s_2+\cdots+s_r}H_{s_2-1}^{(-1)} & \ddots & & \vdots & &\\
&\vdots &  & \ddots & (t-\theta)^{s_r} & 0 & &\\
& 0 & \cdots & 0 &(t-\theta)^{s_r}H_{s_r-1}^{(-1)} & 1 & &.\\
\end{array}
\end{align*}
They obtained the rigid analytic trivialization of $\Phi$ as the following matrix:
\[
	\Psi:=
\begin{array}{rccccccll}
\ldelim({6}{4pt}[] &\Omega^{s_1+\cdots+s_r} &  &  &  &  & & \rdelim){6}{4pt}[] & \\
&L(s_1)\Omega^{s_2+\cdots+s_r} & \Omega^{s_2+\cdots+s_r} &  &  &  &  & &\\
&\vdots &L(s_2)\Omega^{s_3+\cdots+s_r} & \ddots &  &  & & &\\
&\vdots & \vdots & \ddots & \ddots & & & &\\
&L(s_1, \ldots, s_{r-1})\Omega^{s_r} & L(s_2, \ldots. s_{r-1})\Omega^{s_r} &  & \ddots & \Omega^{s_r} & & &\\
&L(s_1, \ldots, s_r)& L(s_2, \ldots, s_r) & \cdots & \cdots & L(s_r) & 1 & &.
\end{array}
\]

Here we define
\begin{align*}
	L(s_1, \ldots, s_r)&:=\sum_{d_1>\cdots>d_r\geq 0}(H_{s_1-1}\Omega^{s_1})^{(d_1)}\cdots(H_{s_r-1}\Omega^{s_r})^{(d_r)}.
\end{align*}

By using $\Omega|_{t=\theta}=\tilde{\pi}^{-1}$ and \eqref{atint}, we obtain the following for $1\leq i\leq j\leq r$:
\begin{align*}
	L(s_i, \ldots, s_j)|_{t=\theta}&=\frac{\Gamma_{s_i}\cdots\Gamma_{s_j}}{\tilde{\pi}^{s_i+\cdots+s_j}}\sum_{d_i>\cdots >d_j\geq0}S_{d_i}(s_i)\cdots S_{d_j}(s_j)=\frac{\Gamma_{s_i}\cdots\Gamma_{s_j}}{\tilde{\pi}^{s_i+\cdots+s_j}}\zeta_A(s_i, \ldots, s_j).\nonumber
\end{align*}
Therefore it is known that MZVs are periods of the rigid analytically trivial pre-$t$-motive $M$ defined by $\Phi$.   
\subsection{AMZVs as periods}
Next we give a period interpretation of AMZVs . 
\begin{defn}\label{M}
Given $\gamma_i\in\overline{\mathbb{F}_q}^{\times}$ $(i=1, \ldots, r)$, a fixed $(q-1)$st root of $\epsilon_i\in\mathbb{F}_q^{\times}$, we let $M$ be the pre-$t$-motive such that $\dim_{\overline{k}(t)}M=r+1$ and for the fixed $\overline{k}(t)$-basis ${\bf m}$ of $M$ which satisfy
\begin{align*}
	\sigma{\bf m}=\Phi{\bf m}
\end{align*} 
where $\Phi$ is the following matirix:
\begin{align*}
\begin{array}{rcccccll}
\ldelim({5.5}{4pt}[] & (t-\theta)^{s_1+\cdots+s_r} & 0 & 0 & \cdots & 0 &\rdelim){5.5}{4pt}[] &\\
&\gamma_1^{(-1)}(t-\theta)^{s_1+\cdots+s_r}H_{s_1-1}^{(-1)} & (t-\theta)^{s_2+\cdots+s_r} & 0 & \cdots & 0 & &\\
& 0 & \gamma_2^{(-1)}(t-\theta)^{s_2+\cdots+s_r}H_{s_2-1}^{(-1)} & \ddots & & \vdots & & \\
&\vdots &  & \ddots & (t-\theta)^{s_r} & 0 & &\\
&0 & \cdots & 0 & \gamma_r^{(-1)}(t-\theta)^{s_r}H_{s_r-1}^{(-1)} & 1 & &.\\
\end{array}
\end{align*}
Here $H_{s_i-1}\in A[t]$ is the Anderson-Thakur polynomial.
\end{defn}
For the above pre-$t$-motive, we can give a matrix $\Psi$ which has a relation with $\Phi$ as the following proposition.
\begin{prop}\label{rat}
For the pre-$t$-motive $M$ defined in Definition \ref{M}, there exists $\Psi\in{\rm GL}_{r+1}(\overline{k}[t])$ so that it satisfies $\Psi^{(-1)}=\Phi\Psi$. 
\end{prop}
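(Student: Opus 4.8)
The plan is to construct $\Psi$ explicitly, exactly as Anderson--Thakur do in the non-alternating case, but twisting each occurrence of $H_{s_i-1}\Omega^{s_i}$ in the entries of $\Psi$ by the corresponding unit $\gamma_i$. Concretely, define for $1\leq i\leq j\leq r$ the alternating analogue
\begin{align*}
	L(s_i,\ldots,s_j;\gamma_i,\ldots,\gamma_j):=\sum_{d_i>\cdots>d_j\geq 0}(\gamma_i H_{s_i-1}\Omega^{s_i})^{(d_i)}\cdots(\gamma_j H_{s_j-1}\Omega^{s_j})^{(d_j)},
\end{align*}
and let $\Psi$ be the lower-triangular matrix whose $(a,b)$-entry (for $a\geq b$) is $L(s_b,\ldots,s_{a-1};\gamma_b,\ldots,\gamma_{a-1})\,\Omega^{s_a+\cdots+s_r}$ on the off-diagonal part (with the diagonal entries $\Omega^{s_a+\cdots+s_r}$ for $a\leq r$ and $1$ in the last slot), mirroring the displayed $\Psi$ for MZVs. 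Note that since $\gamma_i\in\overline{\mathbb{F}_q}^\times$, its Frobenius twists $\gamma_i^{(n)}$ are again elements of $\overline{\mathbb{F}_q}^\times$, and in fact $\gamma_i^{(n)}=\gamma_i^{q^n}$, which is why the products $\gamma_i^{\deg}$-type factors eventually collapse to powers of $\epsilon_i$ at $t=\theta$; but for the functional equation we only need the twisting rule.

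The key steps, in order: (i) verify that each series $L(s_i,\ldots,s_j;\gamma_i,\ldots,\gamma_j)$ converges and in fact lies in $\mathbb{E}$ (hence in $\mathbb{T}$), using $\Omega\in\mathbb{E}$ from \eqref{omegaentire}, the degree bound \eqref{degh} on $H_s$, and the standard estimate that $\Omega^{(d)}$ has valuation growing like $q^d$ so that the $d_1>\cdots>d_r\geq 0$ sum converges entirely — the unit factors $\gamma_i^{(d_i)}$ have absolute value $1$ and do not affect convergence; (ii) check the Frobenius difference equation $\Psi^{(-1)}=\Phi\Psi$ entrywise, which reduces to the single identity $L(s_i,\ldots,s_j;\gamma_i,\ldots,\gamma_j)^{(-1)}=(\gamma_i H_{s_i-1}\Omega^{s_i})^{(-1)}\cdot L(s_{i+1},\ldots,s_j;\gamma_{i+1},\ldots,\gamma_j) + \cdots$ coming from peeling off the largest index $d_i$ in the defining sum (this is the same telescoping used by Anderson--Thakur, now carrying the harmless scalar $\gamma_i^{(-1)}$ along); (iii) observe $\Psi$ is lower triangular with diagonal entries $\Omega^{s_a+\cdots+s_r}$ (and $1$), each a unit in $\overline{k}[[t]]$ since $\Omega$ has no zero on the closed unit disc, whence $\Psi\in\mathrm{GL}_{r+1}(\overline{k}[[t]])$; and (iv) upgrade $\overline{k}[[t]]$ to $\overline{k}[t]$-entries — here one uses that $H_{s-1}\in A[t]$ is a polynomial and that the functional equation forces the entries to be rational, then entire, then (being bounded on the unit disc with the right denominators) actually polynomial, exactly as in \cite{AT1}; alternatively one simply records that the relevant entries are the products $L(\cdots)\Omega^{(\cdots)}$ which a priori lie in $\mathbb{E}[1/\Omega]\cap\mathbb{T}$, and the triangular structure plus \eqref{atint}-type identities pin them down.

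The main obstacle is step (iv): verifying that the entries of $\Psi$ genuinely lie in $\overline{k}[t]$ rather than merely in $\mathbb{T}$ or in $\overline{k}[[t]]$. In the MZV case this is handled by Anderson--Thakur via a careful analysis of the entire functions $L(\cdots)$ and the polynomiality of $H_s$; the alternating twisting by $\gamma_i\in\overline{\mathbb{F}_q}^\times$ does not change the valuations involved (units of absolute value $1$), so I expect the same argument to go through verbatim, but it must be checked that no spurious poles or non-algebraic coefficients are introduced — in particular that the field generated over $k_\infty$ by all the coefficients stays finite-dimensional, which holds because the $\gamma_i$ and their twists live in the finite field $\overline{\mathbb{F}_q}\cap(\text{finite extension})$. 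Everything else is a routine transcription of the Anderson--Thakur construction with bookkeeping for the extra scalars.
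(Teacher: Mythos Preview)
Your construction is essentially the paper's: your series $L(s_i,\ldots,s_j;\gamma_i,\ldots,\gamma_j)$ coincides with the paper's $\gamma_i\cdots\gamma_j\,L(s_i,\ldots,s_j;\epsilon_i,\ldots,\epsilon_j)$ because $\gamma_m^{(d_m)}=\gamma_m^{q^{d_m}}=\gamma_m\epsilon_m^{d_m}$, and the paper likewise verifies $\Psi^{(-1)}=\Phi\Psi$ entry by entry using the recursion for $L$ together with $\gamma_i^{(-1)}=\epsilon_i^{-1}\gamma_i$ and $\Omega^{(-1)}=(t-\theta)\Omega$.

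Two corrections are needed. First, in step~(ii) the telescoping goes the other way: applying $(-1)$-twist shifts each $d_m$ down by one, so the term that splits off is the one with $d_j=-1$, i.e.\ the \emph{smallest} (last) index, giving
\[
L(s_i,\ldots,s_j;\ldots)^{(-1)}=L(s_i,\ldots,s_j;\ldots)+(\gamma_j H_{s_j-1}\Omega^{s_j})^{(-1)}\,L(s_i,\ldots,s_{j-1};\ldots),
\]
not a peeling of the largest index $d_i$ as you wrote. This matters because the subdiagonal entries of $\Phi$ carry $H_{s_{i-1}-1}^{(-1)}$, which must match the last factor peeled from $L$.

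Second, and more importantly, your ``main obstacle'' step~(iv) is based on a misreading: the ``$\overline{k}[t]$'' in the proposition is a typo (the entries of $\Psi$ contain $\Omega$, which is a genuine power series, so they cannot be polynomials). The paper's proof of this proposition does nothing beyond verifying the functional equation; the fact that $\Psi\in\mathrm{Mat}_{r+1}(\mathbb{T})$ (whence $\Psi\in\mathrm{GL}_{r+1}(\mathbb{L})$ since $\det\Psi=\Omega^{\sum(s_i+\cdots+s_r)}$) is established separately in Lemma~\ref{psie} via the Gauss-norm estimate on $H_{s_m-1}^{(d_m)}/\bigl((t-\theta^q)\cdots(t-\theta^{q^{d_m}})\bigr)^{s_m}$, followed by Proposition~\ref{abpprop1} to upgrade from $\mathbb{T}$ to $\mathbb{E}$. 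Neither here nor in \cite{AT1} are the entries claimed to be polynomials, so you should simply drop step~(iv).
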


\begin{proof}
For the matrix $\Phi$ in Definition \ref{M}, we show that the matrix $\Psi\in{\rm GL}_{r+1}(\overline{k}[t])$ satisfies $\Psi^{(-1)}=\Phi\Psi$ and
which is given as follows:
\begin{align}\label{Psi}
\Psi:=
&\begin{array}{rccccccll}
\ldelim({6}{1pt}[] &\Omega^{s_1+\cdots+s_r} &  &  &  &  & &
\rdelim){6}{1pt}[]  &\\
&\gamma_1L(s_1)\Omega^{s_2+\cdots+s_r} & \Omega^{s_2+\cdots+s_r} &  &
&  & & & \\
&\vdots & \gamma_2L(s_2)\Omega^{s_3+\cdots+s_r} & \ddots &  &  & & & \\
&\vdots & \vdots & \ddots & \ddots & & & &\\
&\gamma_1\cdots\gamma_{r-1}L(s_1, \ldots, s_{r-1})\Omega^{s_r} &
\gamma_2\cdots\gamma_{r-1}L(s_2, \ldots. s_{r-1})\Omega^{s_r} &  &
\ddots & \Omega^{s_r} & & &\\
&\gamma_1\cdots\gamma_{r}L(s_1, \ldots, s_r)&
\gamma_2\cdots\gamma_{r}L(s_2, \ldots, s_r) & \cdots & \cdots &
\gamma_rL(s_r) &1 & &.
\end{array}
\end{align}
Here we define the following:
\begin{align*}
	L(s_1, \ldots, s_r)&:=L(s_1, \ldots, s_r; \epsilon_1, \ldots, \epsilon_r)\\
				   &:=\sum_{d_1>\cdots>d_r\geq 0}\epsilon_1^{d_1}(H_{s_1-1}\Omega^{s_1})^{(d_1)}\cdots\epsilon_r^{d_r}(H_{s_r-1}\Omega^{s_r})^{(d_r)}.
\end{align*}
We also note that the following equations hold by their definitions.
\begin{align}
\gamma_i^{(-1)}=&\epsilon_i^{-1}\gamma_i, \label{deqg}\\
\Omega^{(-1)}=&(t-\theta)\Omega, \label{deqo}\\
L(s_1, \ldots, s_r)^{(-1)}=&\biggl(\prod_{n=0}^{r}\epsilon_n\biggr)L(s_1, \ldots, s_r)\label{deql}\\
&+\biggl(\prod_{n=0}^{r-1}\epsilon_n\biggr)(\Omega^{s_r}H_{s_r-1})^{(-1)}L(s_1, \ldots, s_{r-1}). \nonumber
\end{align}

Now we show the validity of the equation 
\[
\Psi^{(-1)}=\Phi\Psi
\]
by comparing their entries.

For $1\leq i\leq r+1$, and $i\leq j\leq r+1$, it is obvious that $(i, j)$-th entry of the left hand side $\Psi^{(-1)}_{ij}$ is equal to the $(i, j)$-th entry of the right hand side $(\Phi\Psi)_{ij}$ by \eqref{deqo}.

In the rest of this proof, we set $\prod_{n=i}^j\gamma_{n}=1$ for $0\leq j<i$ since it is empty product. 
For $2\leq i\leq r$ and $1\leq j < i$, $\Psi^{(-1)}_{ij}$ is transformed as follows.
\begin{align*}
\Psi^{(-1)}_{ij}=&\biggl(\prod_{n=j}^{i-1}(\gamma_n)^{(-1)}\biggr)(t-\theta)^{s_i+\cdots+s_r}\Omega^{s_i+\cdots+s_r}L(s_j, \ldots, s_{i-1})^{(-1)}\nonumber\\
                  \intertext{by using the equation \eqref{deql},}
                  =&\biggl(\prod_{n=j}^{i-1}\gamma_n^{(-1)}\biggr)\bigl\{(t-\theta)\Omega\bigr\}^{s_i+\cdots+s_r}\biggl(\prod_{n=j}^{i-1}\epsilon_n\biggr) L(s_j, \ldots, s_{i-1})\nonumber\\
                  &+\biggl(\prod_{n=j}^{i-1}\gamma_n^{(-1)}\biggr)\bigl\{(t-\theta)\Omega\bigr\}^{s_i+\cdots+s_r}\biggl(\prod_{n=j}^{i-2}\epsilon_n\biggr)(\Omega^{s_{i-1}}H_{s_{i-1}-1})^{(-1)}L(s_j, \ldots, s_{i-2})\nonumber
                  \intertext{by using the equation \eqref{deqg},}
                  =&\biggl(\prod_{n=j}^{i-1}\gamma_n\biggr)\bigl\{(t-\theta)\Omega\bigr\}^{s_i+\cdots+s_r}L(s_j, \ldots, s_{i-1})\nonumber\\
                  &+\gamma_{i-1}^{(-1)}\biggl(\prod_{n=j}^{i-2}\gamma_{n}\biggr)\bigl\{(t-\theta)\Omega\bigr\}^{s_i+\cdots+s_r}(\Omega^{s_{i-1}}H_{s_{i-1}-1})^{(-1)}L(s_j, \ldots, s_{i-2})\nonumber\\
                  \intertext{by using the equation \eqref{deqo},}
                  =&\biggl(\prod_{n=j}^{i-1}\gamma_n\biggr)\bigl\{(t-\theta)\Omega\bigr\}^{s_i+\cdots+s_r}L(s_j, \ldots, s_{i-1})\nonumber\\
                  &+\gamma_{i-1}^{(-1)}\biggl(\prod_{n=j}^{i-2}\gamma_n\biggr)\bigl\{(t-\theta)\Omega\bigr\}^{s_{i-1}+\cdots+s_r}(H_{s_{i-1}-1})^{(-1)}L(s_j, \ldots, s_{i-2}).\nonumber
                  \end{align*}

On the other hand, $(\Phi\Psi)_{ij}$ is the product of $i$-th row of $\Phi$
\[
\begin{array}{r cccccccccc l}
                      & \multicolumn{3}{c}{\overbrace{\hspace{1.3cm}}^{i-2}}& & & & & & & & \\
\ldelim( {1}{4pt}[] & 0 & \cdots & 0 & \gamma_{i-1}^{(-1)}(t-\theta)^{s_{i-1}+\cdots+s_r}H_{s_{i-1}-1}^{(-1)} & (t-\theta)^{s_i+\cdots+s_r} & 0 & \cdots & 0 & \rdelim) {1}{4pt}[]
\end{array} 
\]
and $j$-th column of $\Psi$
\begin{align}\label{cpsi}
\begin{array}{rcll}
\ldelim( {10}{4pt}[] & 0 & \rdelim) {10}{4pt}[] & \rdelim\}{3}{10pt}[$j-1$]\\
& \vdots & & \\
& 0 & & \\
& \Bigl(\prod_{n=j}^{j-1}\gamma_n\Bigr)\Omega^{s_j+\cdots+s_r} & &  \\
& \Bigl(\prod_{n=j}^{j}\gamma_n\Bigr)\Omega^{s_{j+1}+\cdots+s_r}L(s_j) & & \\
& \vdots & & \\
&\Bigl(\prod_{n=j}^{r-1}\gamma_n\Bigr)\Omega^{s_r}L(s_j,\ldots, s_{r-1}) & &  \\
& \Bigl(\prod_{n=j}^{r}\gamma_n\Bigr)L(s_j,\ldots, s_{r}) & &.
\end{array}
\end{align}
That is, 
\begin{align*}
(\Phi\Psi)_{ij}=&\gamma_{i-1}^{(-1)}(t-\theta)^{s_{i-1}+\cdots+s_r}H_{s_{i-1}-1}^{(-1)}\biggl(\prod^{i-2}_{n=j}\gamma_n\biggr)\Omega^{s_{i-1}+\cdots+s_r}L(s_{j}, \ldots, s_{i-2})\\
&+(t-\theta)^{s_{i}+\cdots+s_r}\biggl(\prod^{i-1}_{n=j}\gamma_n\biggr)\Omega^{s_{i}+\cdots+s_r}L(s_{j}, \ldots, s_{i-1}).
\end{align*}
Thus we have $(\Phi\Psi)_{ij}=\Psi^{(-1)}_{ij}$.
\end{proof}

As we will see in Lemma \ref{psie} later, the matrix $\Psi$ in \eqref{Psi} belongs to ${\rm Mat}_{r+1}({\mathbb{T}})$ then we obtain that $\Psi\in{\rm GL}_{r+1}({\mathbb{L}})$ from \eqref{omegaentire} and ${\rm det}\Psi=\Omega^{\sum_{i=1}^rd_i}\neq 0$ where $d_i=s_i+\cdots+s_r$. Therefore $\Psi$ is a rigid analytic trivialization of $\Phi$ and we call each entry of $\Psi|_{t=\theta}$ a period of $M$. Thus we have the following result by the above proposition.
\begin{thm}\label{perint}
For $\mathfrak{s}=(s_1, \ldots, s_r)\in\mathbb{N}^r$ and ${\boldsymbol \epsilon}=(\epsilon_1, \ldots, \epsilon_r) \in(\mathbb{F}_q^{\times})^r$, $\zeta_A(\mathfrak{s};{\boldsymbol \epsilon})$ are periods of the pre-$t$-motive $M$ in Definition \ref{M}.
\end{thm}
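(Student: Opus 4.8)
The plan is to read off $\zeta_A(\mathfrak{s};\boldsymbol{\epsilon})$ from the bottom-left entry of the specialization $\Psi|_{t=\theta}$, exactly as Anderson--Thakur did in the non-alternating case, and then invoke the identities already established in the excerpt to convert that entry into the desired value. Concretely, Proposition \ref{rat} gives us a rigid analytic trivialization $\Psi$ of $\Phi$ whose $(r+1,1)$-entry is $\gamma_1\cdots\gamma_r L(s_1,\ldots,s_r;\epsilon_1,\ldots,\epsilon_r)$, and (as remarked in the paragraph following Proposition \ref{rat}) $\Psi$ lies in $\mathrm{Mat}_{r+1}(\mathbb{T})$ with $\det\Psi=\Omega^{\sum d_i}$ nonvanishing, so $\Psi\in\mathrm{GL}_{r+1}(\mathbb{L})$ and the entries of $\Psi|_{t=\theta}$ are by definition periods of $M$. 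It remains only to compute this particular entry at $t=\theta$.

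First I would specialize: using $\Omega|_{t=\theta}=\tilde{\pi}^{-1}$ together with the Anderson--Thakur interpolation \eqref{atint}, which gives $(H_{s_i-1}\Omega^{s_i})^{(d_i)}|_{t=\theta}=\Gamma_{s_i}S_{d_i}(s_i)/\tilde{\pi}^{s_i}$, I would evaluate
\begin{align*}
L(s_1,\ldots,s_r)|_{t=\theta}
&=\sum_{d_1>\cdots>d_r\geq 0}\epsilon_1^{d_1}\cdots\epsilon_r^{d_r}\,(H_{s_1-1}\Omega^{s_1})^{(d_1)}|_{t=\theta}\cdots(H_{s_r-1}\Omega^{s_r})^{(d_r)}|_{t=\theta}\\
&=\frac{\Gamma_{s_1}\cdots\Gamma_{s_r}}{\tilde{\pi}^{s_1+\cdots+s_r}}\sum_{d_1>\cdots>d_r\geq 0}\epsilon_1^{d_1}S_{d_1}(s_1)\cdots\epsilon_r^{d_r}S_{d_r}(s_r)
=\frac{\Gamma_{s_1}\cdots\Gamma_{s_r}}{\tilde{\pi}^{s_1+\cdots+s_r}}\,\zeta_A(\mathfrak{s};\boldsymbol{\epsilon}),
\end{align*}
where the last equality is the series identity \eqref{chpes} combined with $S_{d}(s;\epsilon)=\epsilon^d S_d(s)$ and the definition \eqref{extpowsum} of the extended power sums. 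Multiplying by $\gamma_1\cdots\gamma_r$, the $(r+1,1)$-entry of $\Psi|_{t=\theta}$ equals $(\gamma_1\cdots\gamma_r)\dfrac{\Gamma_{s_1}\cdots\Gamma_{s_r}}{\tilde{\pi}^{s_1+\cdots+s_r}}\zeta_A(\mathfrak{s};\boldsymbol{\epsilon})$. Since $\gamma_i\in\overline{\mathbb{F}_q}^{\times}\subset\overline{k}^{\times}$, $\Gamma_{s_i}\in A\subset\overline{k}$, and $\tilde{\pi}^{-1}=\Omega(\theta)\in\overline{k_\infty}$, this entry lies in the $\overline{k}$-span of $\zeta_A(\mathfrak{s};\boldsymbol{\epsilon})$ and $\tilde{\pi}^{-w}$; reading the statement in the sense used for MZVs (``appears as a period''), one concludes that $\zeta_A(\mathfrak{s};\boldsymbol{\epsilon})$ is, up to the algebraic factor $\gamma_1\cdots\gamma_r\Gamma_{s_1}\cdots\Gamma_{s_r}$ and the Carlitz period power, among the periods of $M$.

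The main obstacle is not the computation above — which is routine given \eqref{atint} and \eqref{chpes} — but justifying that $\Psi\in\mathrm{Mat}_{r+1}(\mathbb{T})$, i.e. that each $L(s_j,\ldots,s_\ell)$ (times a power of $\Omega$) converges on the closed unit disc; this is precisely the content of the forthcoming Lemma \ref{psie}, which I would cite here. Once entrywise convergence at $t=\theta$ is in hand, the nonvanishing of $\det\Psi=\Omega^{\sum_i d_i}$ (immediate from $\Omega\in\mathbb{E}$ having no zero at $t=\theta$, since its zeros are at $\theta^{q^i}$, $i\geq 1$) upgrades $\Psi$ to an element of $\mathrm{GL}_{r+1}(\mathbb{L})$, so that by definition its specialization yields periods of $M$. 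Thus the proof reduces to: (i) Proposition \ref{rat} for the functional equation $\Psi^{(-1)}=\Phi\Psi$; (ii) Lemma \ref{psie} for entire/Tate-algebra membership; (iii) the elementary specialization via \eqref{atint}. I would present it as a short deduction assembling these three ingredients.
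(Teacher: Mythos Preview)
Your proposal is correct and follows essentially the same approach as the paper: the paper's proof is the short computation of $L(\mathfrak{s})|_{t=\theta}$ via $\Omega|_{t=\theta}=\tilde{\pi}^{-1}$ and \eqref{atint}, yielding exactly your displayed identity, and then an appeal to the matrix $\Psi$ from \eqref{Psi}. You are, if anything, more explicit than the paper in spelling out the surrounding justifications (the role of Lemma~\ref{psie} for $\Psi\in\mathrm{Mat}_{r+1}(\mathbb{T})$ and the nonvanishing of $\det\Psi$), which the paper places in the paragraph preceding the theorem rather than inside the proof itself.
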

\begin{proof}

By using $\Omega|_{t=\theta}=\tilde{\pi}^{-1}$ and \eqref{atint}, we obtain 
\begin{align}\label{mplzeta}
	L(\mathfrak{s})|_{t=\theta}&=\frac{\Gamma_{s_1}\cdots\Gamma_{s_r}}{\tilde{\pi}^{s_1+\cdots+s_r}}\sum_{d_1>\cdots >d_r\geq0}\epsilon_1^{d_1}\cdots\epsilon_r^{d_r}S_{d_1}(s_1)\cdots S_{d_r}(s_r)\\
	&=\frac{\Gamma_{s_1}\cdots\Gamma_{s_r}}{\tilde{\pi}^{s_1+\cdots+s_r}}\zeta_A(\mathfrak{s}; {\boldsymbol \epsilon}).\nonumber
\end{align}
Therefore by the matrix $\Psi$ in \eqref{Psi}, $\zeta_A(\mathfrak{s}; {\boldsymbol \epsilon})$ are periods of the pre-$t$-motive $M$ in Definition \ref{M}. 
\end{proof}

\section{Linear independence of monomials of AMZVs}\label{seclinind}
In this section, we show that AMZVs form an weight-graded algebra as an application of their period interpretation in the former section. The proof is shown along the method which was invented by Chang \cite{C1}.

\subsection{ABP-criterion}
In our proof, we need to use ABP criterion (ABP stands for Anderson-Brownawell-Papanikolas), which was introduced in \cite{ABP}.

The ABP criterion is stated as the following theorem.
\begin{thm}[{\cite[Theorem 3.1.1]{ABP}}]\label{abpcri}
Fix $\Phi\in{\rm Mat}_{d}(\overline{k}[t])$ so that ${\rm det}\Phi=c(t-\theta)^s$ for some $c\in\overline{k}^{\times}$ and some $s\in\mathbb{Z}_{\geq 0}$. Suppose that there exists a vector $\psi\in{\rm Mat}_{d\times 1}(\mathbb{E})$ satisfies
\[
    \psi^{(-1)}=\Phi\psi.
\]
For every $\rho\in{\rm Mat}_{1\times d}(\overline{k})$ such that $\rho\psi(\theta)=0$, there is a $P\in{\rm Mat}_{1\times d}(\overline{k}[t])$ so that $P(\theta)=\rho$ and $P\psi=0$. 
\end{thm}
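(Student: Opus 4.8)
This is the Anderson--Brownawell--Papanikolas criterion, and the plan is to reproduce the argument of \cite{ABP}. Its three ingredients are: a function-field Siegel's lemma (many unknowns, fewer linear equations over a finite extension of $k$ force a nonzero solution of controlled height); a non-archimedean Liouville inequality coming from the product formula (the product over all places of the absolute values of a nonzero element of a number field analogue is $1$); and a lemma linking the degree in $t$, the logarithmic height, and the order of vanishing at $t=\theta$ of functions of the shape $P\psi$ with $P\in{\rm Mat}_{1\times d}(\overline{k}[t])$. As a preliminary reduction, since each entry of $\psi$ lies in $\mathbb{E}$, the subfield $K\subset\overline{k}$ generated over $k$ by all coefficients of the $\psi_i$, all entries of $\Phi$, and all entries of $\rho$ is a finite extension of $k$; one fixes $K$, normalizes the absolute values of $K$ so that the product formula holds, and seeks $P$ with coefficients in the integral closure $\mathcal{O}$ of $A$ in $K$. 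The proof then runs by induction on $d$; the base case $d=1$ already exhibits the mechanism: if $\rho_1\neq 0$ and $\psi_1(\theta)=0$, comparing orders at $\theta$ in $\psi_1^{(-1)}=c(t-\theta)^s\psi_1$ forces $\psi_1$ to vanish at $\theta,\theta^q,\theta^{q^2},\dots$ to orders growing at least linearly, and since $\psi_1$ is entire with super-exponentially decaying coefficients a Liouville/counting estimate forces $\psi_1\equiv 0$, so $P=(\rho_1)$ works.

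For the inductive step with $d\geq 2$, I would first observe that it suffices to produce a single \emph{nonzero} $P_0\in{\rm Mat}_{1\times d}(\overline{k}[t])$ with $P_0\psi=0$: after permuting coordinates one may assume the last entry of $P_0$ is nonzero, so $\psi_d$ is expressed over $\overline{k}(t)$ in terms of $\psi_1,\dots,\psi_{d-1}$, and clearing denominators together with the functional equation lets one replace $(\Phi,\psi)$ by a pair of size $d-1$ (still with determinant a constant times a power of $(t-\theta)$) to which the induction hypothesis applies, yielding the required $P$ with $P(\theta)=\rho$ and $P\psi=0$. To build such a $P_0$: for a large parameter $N$, the function-field Siegel's lemma produces a nonzero $P_N\in{\rm Mat}_{1\times d}(\mathcal{O}[t])$ with $\deg_t P_N$ and logarithmic height $O(N)$ such that the entire function $f_N:=P_N\psi\in\mathbb{E}$ vanishes at $t=\theta$ to order $\gg N$ (there are $\asymp dN$ free coefficients against only $\asymp\delta N$ vanishing conditions for small $\delta$). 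Twisting and using $\psi=\Phi^{(1)}\cdots\Phi^{(i)}\psi^{(i)}$ with ${\rm det}\,\Phi^{(j)}=c^{q^j}(t-\theta^{q^j})^s$, the high-order vanishing at $\theta$ together with the entirety of $f_N$ forces, if $f_N\not\equiv 0$, an upper bound of the form $\log|\lambda_N|_\infty\leq -cN^2$ for the leading nonzero Taylor coefficient $\lambda_N\in K^{\times}$ of $f_N$ at $t=\theta$; but the product formula and the height bound on $P_N$ give $\log|\lambda_N|_\infty\geq -c'N$, which is impossible for $N$ large. Hence $f_N\equiv 0$ and one takes $P_0=P_N$.

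The main obstacle is exactly the quantitative heart of the last construction. One must arrange the Siegel estimate, the propagation of the vanishing across Frobenius twists (each twist multiplies heights by $q$, so $N$ has to be balanced against the number of usable twisted conditions and against the order of vanishing achievable at $\theta$), and the concluding Liouville comparison so that all the explicit constants cohere and the quadratic gain in $N$ genuinely beats the linear loss. This careful bookkeeping is the technical core of \cite{ABP}; none of the material developed elsewhere in the present paper is needed for it, and we use the statement as a black box in the sequel.
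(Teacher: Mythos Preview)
The paper does not prove this theorem at all: it is quoted verbatim as \cite[Theorem 3.1.1]{ABP} and used as a black box, exactly as you yourself conclude in your final sentence. So there is no ``paper's own proof'' to compare against, and your instinct to defer to \cite{ABP} matches what the paper does.

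Your sketch of the ABP argument is broadly faithful to the original: the three ingredients you name (function-field Siegel, product-formula Liouville, and the zero-propagation via the functional equation $\psi^{(-1)}=\Phi\psi$) are exactly the pillars of \cite{ABP}, and the overall architecture---reduce to the case where the entries of $\psi$ are $\overline{k}(t)$-linearly independent, then derive a contradiction from a nonzero $\rho$ with $\rho\psi(\theta)=0$ by an auxiliary-function construction---is correct. Two small imprecisions are worth flagging. First, in your inductive step you phrase the goal as ``produce a nonzero $P_0$ with $P_0\psi=0$,'' but the Siegel construction you then describe does not visibly use the hypothesis $\rho\psi(\theta)=0$; in the actual argument that hypothesis is what seeds the vanishing (one builds $P_N$ with $P_N(\theta)$ proportional to $\rho$, so that $P_N\psi$ automatically vanishes at $\theta$, and the functional equation then amplifies this). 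Second, in your $d=1$ base case the propagation of zeros to $\theta^{q^N}$ uses $\psi_1=c^{q}(t-\theta^q)^s\psi_1^{(1)}$ (the $(1)$-twist of the relation), not the $(-1)$-twist you wrote. Neither point is fatal to the outline, and you correctly identify the quantitative balancing of Siegel against Liouville as the technical crux that one simply imports from \cite{ABP}.
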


From Proposition \ref{rat}, it is a simple matter to verify that the matrices $\Phi$ in Definition \ref{M} satisfy the conditions. We thus only need to show that $\Psi \in {\rm Mat}_{r+1}(\mathbb{E})$. This is necessary in applying Theorem \ref{abpcri} to our AMZVs.
We use the following proposition which was given in \cite{ABP}.

\begin{prop}[{\cite[Proposition 3.1.3]{ABP}}]\label{abpprop1}
Suppose 
\[
	\Phi\in{\rm Mat}_{l}(\overline{k}[t]), \quad \psi\in {\rm Mat}_{l\times 1}(\mathbb{T})
\]
such that 
\[
	{\rm det}\Phi|_{t=0}\neq 0, \quad \psi^{(-1)}=\Phi\psi.
\]
Then 
\[
	\psi\in {\rm Mat}_{l\times 1}(\mathbb{E}).
\]
\end{prop}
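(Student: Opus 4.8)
The plan is to read off everything from the coefficient recursion hidden in $\psi^{(-1)}=\Phi\psi$, using the polynomiality of $\Phi$ and the non-degeneracy $\det\Phi(0)\neq 0$ to control the growth, the algebraicity, and the field of definition of the coefficients of $\psi$ simultaneously. Write $\psi=\sum_{i\geq 0}a_it^i$ with $a_i\in{\rm Mat}_{l\times 1}(\mathbb{C}_{\infty})$; since $\psi\in{\rm Mat}_{l\times 1}(\mathbb{T})$ we have $|a_i|_{\infty}\to 0$. Write $\Phi=\sum_{k=0}^{d}\Phi_kt^k$ with $\Phi_k\in{\rm Mat}_l(\overline{k})$, and put $a_j:=0$ for $j<0$. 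Comparing the coefficient of $t^i$ on both sides of $\psi^{(-1)}=\Phi\psi$ gives $a_i^{(-1)}=\sum_{k=0}^{d}\Phi_ka_{i-k}$, and applying one Frobenius twist yields
\[
  a_i-\Phi_0^{(1)}a_i^{(1)}=c_i,\qquad c_i:=\sum_{k=1}^{d}\Phi_k^{(1)}a_{i-k}^{(1)},
\]
where $c_i$ depends only on $a_0,\dots,a_{i-1}$ and $\Phi_0=\Phi(0)\in{\rm GL}_l(\overline{k})$ by hypothesis.

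First I would prove by induction on $i$ that each $a_i$ has entries in $\overline{k}$. The $\mathbb{F}_q$-linear map $x\mapsto x-\Phi_0^{(1)}x^{(1)}$ on ${\rm Mat}_{l\times 1}(\overline{k})$ is a separable additive polynomial map (its $\mathbb{F}_q$-linear part is the identity), hence surjective with finite, $\overline{k}$-rational kernel, because $\overline{k}$ is algebraically closed. Thus if $c_i$ has entries in $\overline{k}$, then so does every solution of the displayed equation, in particular $a_i$; the base case $i=0$ has $c_0=0$, so $a_0$ lies in the kernel, hence in ${\rm Mat}_{l\times 1}(\overline{k})$.

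Next, entireness. From $a_i^{(-1)}=\sum_k\Phi_ka_{i-k}$ and $|a_i^{(-1)}|_{\infty}=|a_i|_{\infty}^{1/q}$ we obtain $|a_i|_{\infty}^{1/q}\leq C\max_{0\leq k\leq d}|a_{i-k}|_{\infty}$ with $C:=\max(1,\max_k|\Phi_k|_{\infty})$. For $i$ large enough that $|a_i|_{\infty}<C^{-q/(q-1)}$ the maximum cannot be attained at $k=0$ (else $|a_i|_{\infty}\geq C^{-q/(q-1)}$), so $|a_i|_{\infty}\leq C^q|a_{i-k}|_{\infty}^q$ for some $1\leq k\leq d$; iterating this $m$ times reaches an index $j\geq i-dm$ with $|a_i|_{\infty}\leq (C^{q/(q-1)})^{q^m}|a_j|_{\infty}^{q^m}$. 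Choosing $i_0$ with $\sup_{j\geq i_0}|a_j|_{\infty}\leq \frac{1}{2}C^{-q/(q-1)}$ and $m=\lfloor (i-i_0)/d\rfloor$ gives $|a_i|_{\infty}\leq 2^{-q^{(i-i_0)/d-1}}$, whence $\sqrt[i]{|a_i|_{\infty}}\to 0$ and $\psi$ is entire.

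The remaining condition $[k_{\infty}(a_0,a_1,\dots):k_{\infty}]<\infty$ is, I expect, the main obstacle, since a priori the inductive solving of $a_i-\Phi_0^{(1)}a_i^{(1)}=c_i$ could push the $a_i$ into ever larger extensions. To rule this out, fix $i_1$ so large that $|a_i|_{\infty}$ and $|c_i|_{\infty}$ are both smaller than $|\Phi_0|_{\infty}^{-q/(q-1)}$ for all $i\geq i_1$, and let $F$ be the finite extension of $k_{\infty}$ generated by the entries of all $\Phi_k$ together with the (algebraic, by the previous step) coordinates of $a_0,\dots,a_{i_1-1}$. For $i\geq i_1$, iterating $a_i=c_i+\Phi_0^{(1)}a_i^{(1)}$ produces a convergent series $a_i=\sum_{n\geq 0}\bigl(\prod_{j=1}^{n}\Phi_0^{(j)}\bigr)c_i^{(n)}$ whose tail tends to $0$ by the size assumptions; since $F$ is closed under the $q$-power map and $c_i\in{\rm Mat}_{l\times 1}(F)$ by induction, every partial sum lies in ${\rm Mat}_{l\times 1}(F)$, and as $F$ is complete so does the limit $a_i$. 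Hence all $a_i$ lie in the single finite extension $F$ of $k_{\infty}$, which together with the first two steps gives $\psi\in{\rm Mat}_{l\times 1}(\mathbb{E})$.
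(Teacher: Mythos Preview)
The paper does not give its own proof of this proposition: it is quoted verbatim from \cite[Proposition~3.1.3]{ABP} and used as a black box in the proof of Lemma~\ref{psie}. So there is nothing in the present paper to compare against.

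That said, your argument is correct and is essentially the standard one. A couple of minor remarks. In the algebraicity step, the point you really use is not surjectivity of $x\mapsto x-\Phi_0^{(1)}x^{(1)}$ on $\overline{k}^{\,l}$ per se, but that the fibre $T^{-1}(c_i)$ over $c_i\in\overline{k}^{\,l}$, viewed inside $\mathbb{C}_\infty^{\,l}$, is a finite scheme defined over $\overline{k}$ and hence consists entirely of $\overline{k}$-points; this is exactly what forces the given $a_i\in\mathbb{C}_\infty^{\,l}$ to lie in $\overline{k}^{\,l}$. You have the ingredients for this (finiteness of the kernel and algebraic closedness of $\overline{k}$), but phrasing it this way avoids any ambiguity about which field you are arguing over. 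In the entireness step, you should also say a word about why the iteration can be continued down to an index $\geq i_0$ at each stage (each step decreases the index by at most $d$, so you remain above $i_0$ for at least $\lfloor (i-i_0)/d\rfloor$ steps); you implicitly use this when invoking the bound $|a_j|\leq \tfrac12 C^{-q/(q-1)}$ at every intermediate index.
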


By using this proposition, we prove the following lemma whose proof is based on \cite[Lemma 5.3.1]{C1}. 
\begin{lem}\label{psie}
Let $\Psi\in{\rm GL}_{r+1}(\overline{k}[t])$
be as in Proposition \ref{rat}. Then the following holds:
\[
	\Psi\in{\rm Mat}_{r+1}(\mathbb{E}).
\]
\end{lem}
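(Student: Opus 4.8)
The plan is to apply Proposition \ref{abpprop1} to the pair $(\Phi,\Psi)$, so it suffices to verify the two hypotheses there: that $\Psi\in{\rm Mat}_{r+1}(\mathbb{T})$ (so each column lies in ${\rm Mat}_{(r+1)\times 1}(\mathbb{T})$) and that $\det\Phi|_{t=0}\neq 0$. The latter is immediate from the lower-triangular shape of $\Phi$ in Definition \ref{M}: $\det\Phi=(t-\theta)^{d_1}$ with $d_1=s_1+\cdots+s_r$, which is a nonzero scalar multiple of $(t-\theta)^{d_1}$ and does not vanish at $t=0$ since $\theta\neq 0$. The relation $\Psi^{(-1)}=\Phi\Psi$ is already established in Proposition \ref{rat}. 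Thus the entire burden is to show membership in the Tate algebra.

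First I would recall that every entry of $\Psi$ is, up to a unit $\gamma_{j}\cdots\gamma_{i}\in\overline{\mathbb{F}_q}^{\times}$, of the form $L(s_j,\ldots,s_{i})\,\Omega^{s_{i+1}+\cdots+s_r}$ (with the convention that an empty block of $\Omega$-powers is $1$ and $L$ over an empty index is $1$), so it is enough to show each $L(s_j,\ldots,s_i)=L(s_j,\ldots,s_i;\epsilon_j,\ldots,\epsilon_i)\in\mathbb{T}$, since $\Omega\in\mathbb{T}$ by \eqref{omegaentire} and $\mathbb{T}$ is a ring. Write
\[
L(s_j,\ldots,s_i)=\sum_{d_j>\cdots>d_i\geq 0}\epsilon_j^{d_j}(H_{s_j-1}\Omega^{s_j})^{(d_j)}\cdots\epsilon_i^{d_i}(H_{s_i-1}\Omega^{s_i})^{(d_i)},
\]
and observe that the $\epsilon$-factors are units in $\overline{\mathbb{F}_q}$, hence have trivial $|\cdot|_\infty$-size and do not affect convergence: on the closed unit disc $|t|_\infty\le 1$ the series has exactly the same majorant as the $\epsilon$-free series $L(s_j,\ldots,s_i)$ from the MZV case in \cite{AT1}, which is known to define an entire function (it is the power series underlying Anderson–Thakur's period matrix). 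So the argument is essentially a rerun of the estimate for the Anderson–Thakur $L$-series: one bounds the Gauss norm $\|(H_{s-1}\Omega^s)^{(d)}\|$ using \eqref{degh} and the product formula defining $\Omega$, checks that these Gauss norms decay fast enough in $d$ (geometrically, with ratio tending to $0$) for the nested sum over $d_j>\cdots>d_i$ to converge in the $\|\cdot\|$-topology on $\mathbb{T}$, and concludes $L(s_j,\ldots,s_i)\in\mathbb{T}$. This is precisely the content of \cite[Lemma 5.3.1]{C1} with the harmless insertion of the unit factors $\epsilon_m^{d_m}$, so I would cite that lemma and indicate the modification rather than redo the estimate.

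Having shown $\Psi\in{\rm Mat}_{r+1}(\mathbb{T})$, Proposition \ref{abpprop1} applied with $l=r+1$ and $\psi$ each column of $\Psi$ upgrades this to $\Psi\in{\rm Mat}_{r+1}(\mathbb{E})$, which is the claim. The main obstacle — such as it is — is the convergence estimate for $L(s_j,\ldots,s_i)$ in the Tate algebra; but since the $\epsilon_m\in\mathbb{F}_q^\times\subset\overline{\mathbb{F}_q}^\times$ contribute nothing to the $|\cdot|_\infty$-valuation, this reduces verbatim to the MZV computation already in the literature, so in practice the proof is short: verify $\det\Phi|_{t=0}\neq 0$, invoke \cite[Lemma 5.3.1]{C1} (adapted) to get $\Psi\in{\rm Mat}_{r+1}(\mathbb{T})$, then invoke Proposition \ref{abpprop1}.
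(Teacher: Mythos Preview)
Your proposal is correct and follows essentially the same approach as the paper: reduce to showing $\Psi\in{\rm Mat}_{r+1}(\mathbb{T})$ by noting that the $\gamma$- and $\epsilon$-factors have $|\cdot|_\infty$-norm $1$ (the paper spells out the Gauss-norm estimate explicitly rather than simply citing \cite[Lemma~5.3.1]{C1}, but states that its proof is based on that lemma), and then apply Proposition~\ref{abpprop1} column by column. One small slip: $\det\Phi=(t-\theta)^{\sum_{i=1}^{r} d_i}$ with $d_i=s_i+\cdots+s_r$, not $(t-\theta)^{d_1}$, though your conclusion that it is nonzero at $t=0$ is unaffected.
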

\begin{proof}
To apply Proposition \ref{abpprop1}, we first prove that $\Psi\in{\rm Mat}_{r+1}(\mathbb{T})$.
By \eqref{omegaentire}, $\Omega\in\mathbb{T}$ thus it is sufficient to show that each 
$\Bigl(\prod_{n=i}^{j}\gamma_n\Bigr) L(\mathfrak{s};\boldsymbol{\epsilon})$ belongs to $\mathbb{T}$ where $\mathfrak{s}=(s_i, \ldots, s_j)$ and $\boldsymbol{\epsilon}=(\epsilon_i, \ldots, \epsilon_j)$ for $1\leq i\leq j\leq r$.
When $|t|_{\infty}\leq 1$, we have the following for $\sum^{m}_{n=0}a_nt^n\in\overline{k}[t]$:
\[
	||\sum^{m}_{n= 0}a_nt^n||_{\infty}=\max_{m\geq n \geq 0}\{ |a_n|_{\infty}\}\geq \max_{m\geq n \geq 0}\{ |a_nt^n|_{\infty}\}\geq |\sum^{m}_{n=0}a_nt^n|_{\infty}\geq 0.
\]
Thus if $||\sum^{m}_{n=0}a_nt^n||_{\infty}\rightarrow 0$ as $m\rightarrow \infty$, $\sum^{\infty}_{n=0}a_nt^n$ converges. Then for the following series 
\begin{align*}
	\Biggl(\prod_{n=i}^j\gamma_n\Biggr) L(\mathfrak{s};\boldsymbol{\epsilon})
				   =&\sum_{d_i>\cdots>d_j\geq 0}\gamma_i\epsilon_i^{d_i}(H_{s_i-1}\Omega^{s_i})^{(d_i)}\cdots\gamma_j\epsilon_j^{d_j}(H_{s_j-1}\Omega^{s_j})^{(d_j)}\\
				   &=\Omega^{s_i+\cdots+s_j}\sum_{d_i>\cdots>d_j\geq 0}\frac{ (\gamma_i\epsilon_i^{d_i})^{\frac{1}{q^{d_i}}}H_{s_i-1}^{(d_i)}\cdots(\gamma_j\epsilon_j^{d_j})^{\frac{1}{q^{d_j}}}H_{s_j-1}^{(d_j)}}{\bigr((t-\theta^q)\cdots(t-\theta^{q^{d_i} })\bigr)^{s_i}\cdots\bigr((t-\theta^q)\cdots(t-\theta^{q^{d_j} })\bigr)^{s_j}},		   		\end{align*}
we need to show that when $|t|_{\infty}\leq 1$,
\[
\frac{ ||(\gamma_i\epsilon_i^{d_i})^{\frac{1}{q^{d_i}}}H_{s_i-1}^{(d_i)}\cdots(\gamma_j\epsilon_j^{d_j})^{\frac{1}{q^{d_j}}}H_{s_j-1}^{(d_j)}||_{\infty} }{||\bigr((t-\theta^q)\cdots(t-\theta^{q^{d_i} })\bigr)^{s_i}\cdots\bigr((t-\theta^q)\cdots(t-\theta^{q^{d_j} })\bigr)^{s_j}||_{\infty}}\rightarrow 0
\]
 as $0\leq d_j<\cdots <d_i\rightarrow \infty$ ($d_i$ goes to infinity preserving $0\leq d_j<\cdots <d_i$).
By $\epsilon_n\in\mathbb{F}_q^{\times}$ and $\gamma_n\in\overline{\mathbb{F}_q}^{\times}$, we have $|(\gamma_n\epsilon_n^{d_n})^{\frac{1}{q^{d_n}}}|_{\infty}=1$ and thus
\begin{align}\label{aa}
 ||(\gamma_n\epsilon_n^{d_n})^{\frac{1}{q^{d_n}}}H_{s_n-1}^{(d_n)}||_{\infty}=||H_{s_n-1}^{(d_n)}||_{\infty}.
\end{align}
Moreover, by $|t|_{\infty}<|\theta|_{\infty}$ we have $|t-\theta^{q^n}|_{\infty}=|\theta^{q^n}|_{\infty}$ and then 
\begin{align*}
	||\bigr((t-\theta^q)\cdots(t-\theta^{q^{d_i} })\bigr)^{s_i}\cdots\bigr((t-\theta^q)\cdots(t-\theta^{q^{d_j} })\bigr)^{s_j}||_{\infty}=|(\theta^q\cdots\theta^{q^{d_i}})^{s_i}\cdots(\theta^{q}\cdots\theta^{q^{d_j}})^{s_j}|_{\infty}.
\end{align*}
For each $i\leq n\leq j$, we have 
\begin{align}\label{ab}
	\frac{1}{|\theta^{q+\cdots+q^{d_n}}|^{s_n}_{\infty}}=\frac{1}{q^{s_n(q^{d_n}+q^{d_n-1}+\cdots+q)}}=\frac{1}{ q^{qs_n(q^{d_n-1}+q^{d_n-2}+\cdots+1)} }=\frac{q^{qs_n/(q-1)}}{(q^{qs_n/(q-1)})^{q^{d_n}}}.
\end{align}
By using $\eqref{degh}$, $\eqref{aa}$ and $\eqref{ab}$ we have 
\begin{align*}
   &\frac{ ||(\gamma_i\epsilon_i^{d_i})^{\frac{1}{q^{d_i}}}H_{s_i-1}^{(d_i)}\cdots(\gamma_j\epsilon_j^{d_j})^{\frac{1}{q^{d_j}}}H_{s_j-1}^{(d_j)}||_{\infty} }{||\bigr((t-\theta^q)\cdots(t-\theta^{q^{d_i} })\bigr)^{s_i}\cdots\bigr((t-\theta^q)\cdots(t-\theta^{q^{d_j} })\bigr)^{s_j}||_{\infty}}\\
    &=\Bigl(||H_{s_i-1}||^{q^{d_i}}_{\infty}/|\theta^{q+\cdots+q^{d_i}}|^{s_i}_{\infty}\Bigr)\cdots\Bigl(||H_{s_j-1}||^{q^{d_j}}_{\infty}/|\theta^{q+\cdots+q^{d_j}}|^{s_j}_{\infty}\Bigr)\\
    &\leq \biggl(\Bigl(q^{\frac{s_i-1}{q-1}q}\Bigr)^{q^{d_i}}q^{\frac{qs_i}{q-1}}/\Bigl(q^{\frac{qs_i}{q-1}} \Bigr)^{q^{d_i}}  \biggr)\cdots\biggl(\Bigl(q^{\frac{s_j-1}{q-1}q}\Bigr)^{q^{d_j}}q^{\frac{qs_j}{q-1}}/\Bigl(q^{\frac{qs_j}{q-1}} \Bigr)^{q^{d_j}}  \biggr)\\
    &=q^{\frac{q}{q-1}(s_i+\cdots+s_j)}(q^{-s_i-\frac{1}{q-1}})^{q^{d_i}}\cdots (q^{-s_j-\frac{1}{q-1}})^{q^{d_j}}. 
\end{align*}
It is clear that $(q^{-s_i-\frac{1}{q-1}})^{q^{d_i}}\cdots (q^{-s_j-\frac{1}{q-1}})^{q^{d_j}}\rightarrow 0$ as $0\leq d_j<\cdots <d_i\rightarrow \infty$.

Thus we obtain the desired conclusion and therefore
\begin{align}\label{psitate}
	\Psi\in{\rm Mat}_{r+1}(\mathbb{T}).
\end{align}
 
Now we complete the proof. Let $\psi_{i}\in{\rm Mat}_{r+1\times 1}(\mathbb{T})\ (1\leq i\leq r+1)$ be the column of the matrix $\Psi$. Then by using Proposition \ref{rat}, we can show that $\Phi\in{\rm Mat}_{r+1}(\overline{k}[t])$ in Definition \ref{M} and $\psi_i$ satisfy $\psi_i^{(-1)}=\Phi\psi_i$ for each $i$. Furthermore, $\Phi$ satisfies ${\rm det}\Phi|_{t=0}=(-\theta)^{\sum_{i=1}^rd_i}\neq 0$ where $d_i=s_i+\cdots+s_r$. 
Thus we may apply Proposition \ref{abpprop1} to $\Phi$ and $\psi_i$ and therefore we obtain
\[ 
 	\Psi\in{\rm Mat}_{r+1}(\mathbb{E}).
\]
\end{proof}

\subsection{MZ property for AMZVs}
First we verify that AMZVs satisfy the following lemma which is alternating analogue of MZ property in \cite{C1}.
 \begin{lem}\label{amzproperty}
 For a given AMZV $\zeta_{A}(\mathfrak{s};\boldsymbol{\epsilon})$ with ${\rm wt}(\mathfrak{s})=w$ and ${\rm dep}(\mathfrak{s})=r$, there exists $\Phi\in{\rm Mat}_{r+1}(\overline{k}[t])$ and $\psi\in{\rm Mat}_{(r+1)\times 1}(\mathbb{E})$  with $r\geq 1$ such that:
 \begin{enumerate}
     \item [(i)] $\psi^{(-1)}=\Phi\psi$ and $\Phi$ satisfies the condition of Theorem \ref{abpcri}$;$
     \item[(ii)] the last column of $\Phi$ is of the form $(0, \ldots, 0, 1)^{{\rm tr}};$
     \item[(iii)] for some $a\in\overline{\mathbb{F}}_q^{\times}$ and $b\in k^{\times}$, $\psi(\theta)$ is of the form with specific first and last entries
                     \[
                         \psi(\theta)=\Biggl(\frac{1}{\tilde{\pi}^w}, \ldots, a\frac{b\zeta_A(\mathfrak{s};\boldsymbol{\epsilon})}{\tilde{\pi}^w}\Biggr)^{\rm tr};
                     \]
     \item[(iv)] for any $N\in\mathbb{N}$ and some $c\in \mathbb{F}_q^{\times}$, $\psi(\theta^{q^N})$ is of the form  
                      \[
                         \psi(\theta^{q^N})=\Biggl(0, \ldots, 0,  ac^{N}\Bigl(\frac{b\zeta_A(\mathfrak{s};\boldsymbol{\epsilon})}{\tilde{\pi}^w}\Bigr)^{q^{N}}\Biggr)^{\rm tr}.
                     \]
 \end{enumerate}
 \end{lem}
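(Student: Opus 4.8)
The plan is to take the matrix $\Phi$ from Definition \ref{M} and the matrix $\Psi$ from \eqref{Psi} (shown to lie in $\mathrm{Mat}_{r+1}(\mathbb{E})$ by Lemma \ref{psie}) and let $\psi$ be the last column of $\Psi$. By Proposition \ref{rat} and Lemma \ref{psie} we already have $\psi \in \mathrm{Mat}_{(r+1)\times 1}(\mathbb{E})$ with $\psi^{(-1)} = \Phi\psi$; and from the discussion following Proposition \ref{rat} we know $\det\Phi = (t-\theta)^{\sum_i d_i}$ with $d_i = s_i + \cdots + s_r$, so $\Phi$ meets the hypotheses of Theorem \ref{abpcri}. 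This gives (i). Part (ii) is immediate: the last column of $\Phi$ in Definition \ref{M} is $(0,\ldots,0,1)^{\mathrm{tr}}$ by inspection.

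For (iii), first I would read off the last column of $\Psi$ from \eqref{Psi}: its entries are $\Omega^{s_1+\cdots+s_r}$ in the top slot (wait — more precisely the last column of $\Psi$ has top entry $\Omega^{w}$? let me instead recall the structure) — the last column of $\Psi$ is $\bigl(\Omega^{w},\ \gamma_1 L(s_1)\Omega^{s_2+\cdots+s_r},\ \ldots,\ \gamma_1\cdots\gamma_r L(s_1,\ldots,s_r)\bigr)^{\mathrm{tr}}$, where $w = \mathrm{wt}(\mathfrak{s})$. Specializing at $t=\theta$ and using $\Omega(\theta) = \tilde\pi^{-1}$ gives first entry $\tilde\pi^{-w}$, as required. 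For the last entry, \eqref{mplzeta} gives $L(\mathfrak{s})|_{t=\theta} = \frac{\Gamma_{s_1}\cdots\Gamma_{s_r}}{\tilde\pi^{w}}\zeta_A(\mathfrak{s};\boldsymbol{\epsilon})$, so the last entry of $\psi(\theta)$ is $\gamma_1\cdots\gamma_r \cdot \frac{\Gamma_{s_1}\cdots\Gamma_{s_r}}{\tilde\pi^{w}}\zeta_A(\mathfrak{s};\boldsymbol{\epsilon})$. Setting $a := \gamma_1\cdots\gamma_r \in \overline{\mathbb{F}}_q^{\times}$ and $b := \Gamma_{s_1}\cdots\Gamma_{s_r} \in k^{\times}$ puts this in the stated form $a\,b\zeta_A(\mathfrak{s};\boldsymbol{\epsilon})/\tilde\pi^{w}$.

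For (iv), the idea is to iterate the functional equation $\psi^{(-1)} = \Phi\psi$, equivalently $\psi^{(1)} = (\Phi^{(1)})^{-1}\psi$, or more cleanly to compute $\psi(\theta^{q^N})$ directly from the closed form of the entries of $\psi$. The first $r$ entries of $\psi$ all carry a positive power of $\Omega$, and $\Omega = (-\theta)^{-q/(q-1)}\prod_{i\geq 1}(1-t/\theta^{q^i})$ vanishes at $t = \theta^{q^i}$ for every $i \geq 1$; hence every entry of $\psi$ except the last vanishes at $t = \theta^{q^N}$ for $N \geq 1$, giving the leading zeros. For the last entry I would use the twisting identity: from the closed form $\gamma_1\cdots\gamma_r L(\mathfrak{s};\boldsymbol{\epsilon})$ and the fact that $L(\mathfrak{s};\boldsymbol{\epsilon})$ is built from $(H_{s_i-1}\Omega^{s_i})^{(d_i)}$, one checks that evaluating at $\theta^{q^N}$ reproduces $\bigl(L(\mathfrak{s};\boldsymbol{\epsilon})|_{t=\theta}\bigr)^{q^N}$ up to an explicit power-of-$\Gamma$/twisting factor, together with a scalar in $\mathbb{F}_q^{\times}$ coming from $\bigl(\gamma_1\cdots\gamma_r\bigr)^{q^N}/(\gamma_1\cdots\gamma_r) = \bigl(\prod_i \gamma_i\bigr)^{q^N-1}$, which lies in $\mathbb{F}_q^{\times}$ since each $\gamma_i^{q-1}=\epsilon_i \in \mathbb{F}_q^{\times}$ forces $\gamma_i^{q^N-1} \in \mathbb{F}_q^{\times}$. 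Collecting, $\psi(\theta^{q^N}) = \bigl(0,\ldots,0,\ ac^N(b\zeta_A(\mathfrak{s};\boldsymbol{\epsilon})/\tilde\pi^{w})^{q^N}\bigr)^{\mathrm{tr}}$ with $c := \bigl(\prod_i\gamma_i\bigr)^{q-1} = \prod_i \epsilon_i \in \mathbb{F}_q^{\times}$, matching the claim.

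The main obstacle I anticipate is part (iv): one must track the Frobenius-twist bookkeeping carefully — relating $(H_{s_i-1}\Omega^{s_i})^{(d_i)}$ evaluated at $\theta^{q^N}$ to the $q^N$-th power of its value at $\theta$, using \eqref{atint}, the transformation $\Omega^{(-1)} = (t-\theta)\Omega$ from \eqref{deqo}, and the behavior of $\Gamma_{s_i}$ and $\tilde\pi$ under $q$-th powers (recall $\tilde\pi$ is defined over $k_\infty$ only up to a $(q-1)$-st root, so $\tilde\pi^{q^N}$ differs from $\tilde\pi$ in a controlled way absorbed into the constant $c$). The cleanest route is probably to first establish the scalar identity $\psi(\theta^{q^N})$ has last entry equal to a constant times $(\text{last entry of }\psi(\theta))^{q^N}$ by comparing the two entire functions' power-series expansions, rather than iterating the matrix equation $N$ times; the constant is then pinned down to lie in $a\,\mathbb{F}_q^\times$-times-a-$q^N$-power by the explicit $\gamma_i$ computation above.
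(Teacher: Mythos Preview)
Your setup and treatment of (i)--(iii) match the paper's, modulo a labeling slip: the column you describe is the \emph{first} column of the lower-triangular $\Psi$ in \eqref{Psi}, not the last (which is $(0,\ldots,0,1)^{\mathrm{tr}}$). The gap is in (iv), where you hand-wave the computation of the last entry and misattribute the constant $c$. The $\gamma_i\in\overline{\mathbb{F}}_q^{\times}$ are scalars sitting outside $L(\mathfrak{s})$, unaffected by specializing $t$; the factor $c^N$ does not arise from $(\prod_i\gamma_i)^{q^N}/\prod_i\gamma_i$ but from the $\epsilon_i^{d_i}$ inside the series. That your formula $c=(\prod_i\gamma_i)^{q-1}=\prod_i\epsilon_i$ is numerically correct is an artifact of the defining relation $\gamma_i^{q-1}=\epsilon_i$, not a derivation. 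There is also no issue with $\Gamma$-factors or with $\tilde\pi^{q^N}$ versus $\tilde\pi$: the quantity $b\,\zeta_A(\mathfrak{s};\boldsymbol{\epsilon})/\tilde\pi^{w}$ is a fixed element of $\mathbb{C}_\infty$, and only its $q^N$-th power appears.

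What the paper actually does for (iv) is the direct computation you gesture at but do not carry out. Split $L(\mathfrak{s})$ into the part with smallest index $d_r\geq N$ and the part with $d_r<N$. When $d_r<N$, each term has a zero of order $s_1+\cdots+s_r$ at $t=\theta^{q^N}$ coming from $\Omega^{s_1+\cdots+s_r}$, against a pole of order at most $s_1+\cdots+s_{r-1}$ from the denominators, so that part vanishes at $\theta^{q^N}$. In the $d_r\geq N$ part, re-index $d_i\mapsto d_i+N$; this pulls out the factor $(\epsilon_1\cdots\epsilon_r)^N$, and the remaining sum is evaluated termwise via the identity $\bigl(f^{(N)}\bigr)\big|_{t=\theta^{q^N}}=\bigl(f|_{t=\theta}\bigr)^{q^N}$ for $f\in\overline{k}[[t]]$. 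This yields $L(\mathfrak{s})\big|_{t=\theta^{q^N}}=(\epsilon_1\cdots\epsilon_r)^N\bigl(L(\mathfrak{s})|_{t=\theta}\bigr)^{q^N}$, and multiplying by $a=\gamma_1\cdots\gamma_r$ gives (iv) with $c=\prod_i\epsilon_i$.
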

 \begin{proof}
We may take matrices $\Phi\in {\rm Mat}_{r+1}(\overline{k}[t])$ and $\Psi\in {\rm Mat}_{r+1}(\mathbb{E})$ ($r\geq 1$) for each $\zeta_{A}(\mathfrak{s};\boldsymbol{\epsilon})$ with ${\rm wt}(\mathfrak{s})=w$ and ${\rm dep}(\mathfrak{s})=r$ as in Definition \ref{M} and \eqref{Psi} respectively.
Let us denote $\psi_i\in{\rm Mat}_{(r+1)\times 1}(\mathbb{E})$ ($1\leq i \leq r+1$) the $i$-th column of $\Psi$.  
From Definition \ref{M}, Proposition \ref{rat} and Lemma \ref{psie}, it is evident that $\Phi$ and each $\psi_i$ satisfy (i)-(iii). Thus it is enough to check the condition (iv) for each $\psi_i$.  
For each $i$ ($1\leq i \leq r$), we put  
\begin{align*} 
L(\mathfrak{s})_{i}&:=\sum_{d_i>\cdots>d_r\geq 0}\epsilon_i^{d_i}\cdots\epsilon_r^{d_r}(\Omega^{s_i}H_{s_i-1})^{(d_i)}\cdots(\Omega^{s_r}H_{s_r-1})^{(d_r)},\\
L(\mathfrak{s})_{i, {\geq N}}&:=\sum_{d_i>\cdots>d_r\geq N}\epsilon_i^{d_i}\cdots\epsilon_r^{d_r}(\Omega^{s_i}H_{s_i-1})^{(d_i)}\cdots(\Omega^{s_r}H_{s_r-1})^{(d_r)},\\
L(\mathfrak{s})_{i, {< N}}&:=\sum_{\substack{d_i>\cdots>d_r\\d_r<N}}\epsilon_i^{d_i}\cdots\epsilon_r^{d_r}(\Omega^{s_i}H_{s_i-1})^{(d_i)}\cdots(\Omega^{s_r}H_{s_r-1})^{(d_r)}.
\end{align*}
It is obvious that 
$L(\mathfrak{s})_i=L(\mathfrak{s})_{i, {\geq N}}+L(\mathfrak{s})_{i, {< N}}.$
By the definition, $\Omega$ has simple zero at $t=\theta^{q^N}$ and its $(s_i+\cdots+s_r)$-th power has $s_i+\cdots+s_r$ order of zero at $t=\theta^{q^N}$ while the denominator of each term in $L(\mathfrak{s})_{i, < N} $ has at most $s_i+\cdots+s_{r-1}$ order of zero at $t=\theta^{q^N}$. Then we have $L(\mathfrak{s})_{i, < N}|_{t=\theta^{q^N}}=0$ and thus $L(\mathfrak{s})_i|_{t=\theta^{q^N}}=L(\mathfrak{s})_{i, \geq N}|_{t=\theta^{q^N}}$. Here we note that $L(\mathfrak{s})_{i, \geq N}|_{t=\theta^{q^N}}$ converges because of both $\Omega^{s_i+\cdots+s_r}$ and denominator of each term has $s_i+\cdots+s_r$ order of zero at $t=\theta^{q^N}$. We have the following equalities:
\begin{align*}
&L(\mathfrak{s})_i|_{t=\theta^{q^N}}=L(\mathfrak{s})_{i, \geq N}|_{t=\theta^{q^N}}=\sum_{d_i>\cdots>d_r\geq N}\epsilon_i^{d_i}\cdots\epsilon_r^{d_r}\bigl\{(\Omega^{s_i}H_{s_i-1})^{(d_i)}\cdots(\Omega^{s_r}H_{s_r-1})^{(d_r)}\bigr\}|_{t=\theta^{q^N}}\\
&=\sum_{d_i>\cdots>d_r\geq 0}\epsilon_i^{d_i+N}\cdots\epsilon_r^{d_r+N}\bigl\{(\Omega^{s_i}H_{s_i-1})^{(d_i+N)}\cdots(\Omega^{s_r}H_{s_r-1})^{(d_r+N)}\bigr\}|_{t=\theta^{q^N}}\\
&=\epsilon_i^{N}\cdots\epsilon_r^{N}\sum_{d_i>\cdots>d_r\geq 0}\epsilon_i^{d_i}\cdots\epsilon_r^{d_r}\bigl\{(\Omega^{s_i}H_{s_i-1})^{(d_i+N)}\cdots(\Omega^{s_r}H_{s_r-1})^{(d_r+N)}\bigr\}|_{t=\theta^{q^N}}\\
&=\epsilon_i^{N}\cdots\epsilon_r^{N}\Bigl(\sum_{d_i>\cdots>d_r\geq 0}\epsilon_i^{d_i}\cdots\epsilon_r^{d_r}\bigl\{(\Omega^{s_i}H_{s_i-1})^{(d_i)}\cdots(\Omega^{s_r}H_{s_r-1})^{(d_r)}\bigr\}|_{t=\theta}\Bigr)^{q^N}.
\end{align*} 
Let $(\Omega^{s_i}H_{s_i-1})^{(d_i)}\cdots(\Omega^{s_r}H_{s_r-1})^{(d_r)}=\sum_{n\geq 0}a_nt^n\in\mathbb{C}_{\infty}[[t]]$, then the last equality in the above is shown by the following:
\[
	\Bigl\{\Bigl(\sum_{n\geq 0}a_nt^n\Bigr)^{(N)}\Bigr\}|_{t=\theta^{q^N}}=\sum_{n\geq 0}a_n^{q^N}\theta^{nq^N}=\Bigl\{\Bigl(\sum_{n\geq 0}a_nt^n\Bigr)|_{t=\theta}\Bigr\}^{q^N}.
\]
Thus we have
\[
	L(\mathfrak{s})_i|_{t=\theta^{q^N}}=\epsilon_i^{N}\cdots\epsilon_r^{N}L(\mathfrak{s})_i^{q^N}.
\]
Therefore by using $\Omega|_{t=\theta^{q^N}}=0$, \eqref{cpsi} and \eqref{mplzeta}, indeed we obtain  
 \[
                         \psi_i(\theta^{q^N})=\Biggl(0, \ldots, 0,  ac^{N}\Bigl(\frac{b\zeta_A(\mathfrak{s}_i;\boldsymbol{\epsilon}_i)}{\tilde{\pi}^{w_i}}\Bigr)^{q^{N}}\Biggr)^{\rm tr}
  \]
where $\mathfrak{s}_i=(s_i, \ldots, s_r)$, $\boldsymbol{ \epsilon}_i=(\epsilon_i, \ldots, \epsilon_r)$, $w_i={\rm wt}(\mathfrak{s}_i)$, $a=\prod^{r}_{j=i}\gamma_{j}$, $b=\Gamma_{s_i}\cdots\Gamma_{s_r}$ and $c=\prod^{r}_{j=i}\epsilon^{j}$ for each $i$.
\end{proof}
Next we show that monomials of AMZVs also satisfy Lemma \ref{amzproperty} by using the same method in proof of \cite[Proposition 3.4.4]{C1}.
\begin{prop}\label{monoamzprop}
We let $\zeta_A(\mathfrak{s}_1;\boldsymbol{\epsilon}_1), \ldots, \zeta_A(\mathfrak{s}_n;\boldsymbol{\epsilon}_n)$ AMZVs with weights $w_1,$ $\ldots, w_n$ respectively and let $m_1, \ldots, m_n\in\mathbb{Z}_{\geq 0}$. Then there exist matrices $\Phi\in{\rm Mat}_d(\overline{k}[t])$ and $\psi\in{\rm Mat}_{d\times 1}(\mathbb{E})$ with $d\geq 2$ so that  $(\Phi, \psi, \zeta_A(\mathfrak{s}_1;\boldsymbol{\epsilon}_1)^{m_1} \cdots \zeta_A(\mathfrak{s}_n;\boldsymbol{\epsilon}_n)^{m_n})$ satisfies (i)-(iv) in Lemma \ref{amzproperty}. 
\end{prop}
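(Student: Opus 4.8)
The plan is to build the required data $(\Phi,\psi,-)$ by a tensor-product (or block-diagonal followed by a change of basis) construction on the individual pre-$t$-motives attached to the $\zeta_A(\mathfrak{s}_j;\boldsymbol{\epsilon}_j)$, exactly as in \cite[Proposition 3.4.4]{C1}. By Lemma \ref{amzproperty}, for each $j$ ($1\le j\le n$) we have matrices $\Phi_j\in{\rm Mat}_{r_j+1}(\overline{k}[t])$ and $\psi_j\in{\rm Mat}_{(r_j+1)\times 1}(\mathbb{E})$ satisfying (i)--(iv), where the first entry of $\psi_j(\theta)$ is $1/\tilde\pi^{w_j}$ and the last is $a_j b_j\zeta_A(\mathfrak{s}_j;\boldsymbol{\epsilon}_j)/\tilde\pi^{w_j}$ for suitable $a_j\in\overline{\mathbb{F}}_q^\times$, $b_j\in k^\times$, and the last column of $\Phi_j$ is $(0,\ldots,0,1)^{{\rm tr}}$. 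First I would handle a single power $\zeta_A(\mathfrak{s}_j;\boldsymbol{\epsilon}_j)^{m_j}$: raising the defining system to the $m_j$-fold tensor power (or, concretely, taking the $m_j$-fold "symmetric-power-type" construction of Chang) produces $\Phi$ and $\psi$ whose distinguished entry of $\psi(\theta)$ is the $m_j$-th power of the original one, up to a factor in $\overline{\mathbb{F}}_q^\times$ and $k^\times$, and one checks (ii)--(iv) survive because all three are multiplicative under the tensor construction: the last column stays $(0,\ldots,0,1)^{{\rm tr}}$, the first/last entries multiply, and the twisting scalar $c$ in (iv) just gets raised to the $m_j$-th power (still in $\mathbb{F}_q^\times$).

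Next I would combine the $n$ pieces. Form the block-diagonal matrix ${\rm diag}(\Phi_1^{\otimes m_1},\ldots,\Phi_n^{\otimes m_n})$ with the stacked column vector; this still satisfies $\psi^{(-1)}=\Phi\psi$ and the determinant condition of Theorem \ref{abpcri} (a power of $(t-\theta)$ times a unit), since the determinant is the product of the individual ones. The issue is that the distinguished product $\prod_j \zeta_A(\mathfrak{s}_j;\boldsymbol{\epsilon}_j)^{m_j}$ does not yet appear as a single entry. To fix this I would pass to a larger system whose $\Phi$ is block upper-triangular, with the tensor-power blocks on the diagonal and a final row/column arranged so that the last entry of the corresponding $\psi$ becomes (a scalar multiple of) the product $\prod_j\bigl(b_j\zeta_A(\mathfrak{s}_j;\boldsymbol{\epsilon}_j)/\tilde\pi^{w_j}\bigr)^{m_j}$ — this is the standard device for realizing products of periods, e.g. the construction giving $L(s_1,\ldots,s_r)$ from the depth-one data. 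One then has $\psi(\theta)=(1/\tilde\pi^{w},\ldots, a\, b\prod_j\zeta_A(\mathfrak{s}_j;\boldsymbol{\epsilon}_j)^{m_j}/\tilde\pi^{w})^{{\rm tr}}$ with $w=\sum_j m_j w_j$, $a\in\overline{\mathbb{F}}_q^\times$, $b\in k^\times$, giving (iii), while (ii) holds by construction and $\psi\in{\rm Mat}_{d\times 1}(\mathbb{E})$ follows from Proposition \ref{abpprop1} once one checks ${\rm det}\Phi|_{t=0}\ne 0$ (a product of nonzero powers of $-\theta$ by Lemma \ref{psie} and Proposition \ref{rat}).

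The one remaining point is (iv): I would evaluate $\psi$ at $t=\theta^{q^N}$ and verify the prescribed shape $(0,\ldots,0,ac^N(\,\cdot\,)^{q^N})^{{\rm tr}}$. For each diagonal tensor block this follows from property (iv) for $\psi_j$ together with the fact that $N$-fold twisting commutes with tensor powers, so the twisting scalar becomes $c_j^{N m_j}\in\mathbb{F}_q^\times$ and the distinguished entry becomes the $q^N$-power of the $\theta$-value; in the upper-triangular "product" row the off-diagonal contributions all carry a factor of $\Omega$ evaluated at $t=\theta^{q^N}$, which vanishes since $\Omega$ has a zero there, so only the intended product term survives. Collecting the scalars gives $c=\prod_j c_j^{m_j}\in\mathbb{F}_q^\times$, proving (iv).

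\textbf{Main obstacle.} The routine parts are the multiplicativity bookkeeping; the delicate step is arranging the block upper-triangular $\Phi$ so that the product $\prod_j\zeta_A(\mathfrak{s}_j;\boldsymbol{\epsilon}_j)^{m_j}$ genuinely lands in the last entry of $\psi$ while simultaneously (a) keeping the last column equal to $(0,\ldots,0,1)^{{\rm tr}}$, (b) keeping ${\rm det}\Phi$ of the form $c(t-\theta)^s$ with ${\rm det}\Phi|_{t=0}\ne 0$, and (c) preserving the clean vanishing at $t=\theta^{q^N}$ needed for (iv). This is precisely the technical heart of \cite[Proposition 3.4.4]{C1}, and I would follow that argument, checking at each stage that the presence of the units $\gamma_i\in\overline{\mathbb{F}}_q^\times$ (which only contribute the harmless scalar $a\in\overline{\mathbb{F}}_q^\times$) does not disturb any of the integrality or vanishing statements.
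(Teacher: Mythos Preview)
Your proposal overcomplicates the argument and contains a genuine gap in the combining step. The paper's proof is essentially one line: set
\[
\Phi:=\Phi_1^{\otimes m_1}\otimes\cdots\otimes\Phi_n^{\otimes m_n},\qquad
\psi:=\psi_1^{\otimes m_1}\otimes\cdots\otimes\psi_n^{\otimes m_n}
\]
(Kronecker products throughout, not only for the individual powers), and observe that properties (i)--(iv) of Lemma~\ref{amzproperty} are multiplicative under Kronecker product. Indeed, the last entry of $\psi_1\otimes\psi_2$ is the product of the last entries, the first entry is the product of the first entries, the last column of $\Phi_1\otimes\Phi_2$ is again $(0,\ldots,0,1)^{\rm tr}$, the mixed-product identity $(A\otimes B)(v\otimes w)=(Av)\otimes(Bw)$ gives (i), and the Kronecker product of two vectors of the shape $(0,\ldots,0,*)^{\rm tr}$ is again of that shape, which yields (iv) with $c=\prod_j c_j^{m_j}$. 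This is also what \cite[Proposition~3.4.4]{C1} actually does; no block-diagonal step and no upper-triangular correction occur there.

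The gap in your approach is the following. After forming the direct sum ${\rm diag}(\Phi_1^{\otimes m_1},\ldots,\Phi_n^{\otimes m_n})$ with the stacked column vector, you propose to realize the product of the distinguished entries by adjoining ``a final row/column''. But if $x$ denotes the new last entry of $\psi$, the relation $\psi^{(-1)}=\Phi\psi$ forces $x^{(-1)}$ to be an $\overline{k}[t]$-linear combination of the entries of the direct-sum vector together with $x$ itself. For $x$ equal to the product of the last entries of the $\psi_j^{\otimes m_j}$, expanding $x^{(-1)}$ via $\psi_j^{(-1)}=\Phi_j\psi_j$ produces \emph{all} the cross products of entries of the various $\psi_j$; these are exactly the entries of the full Kronecker product $\psi_1^{\otimes m_1}\otimes\cdots\otimes\psi_n^{\otimes m_n}$ and do not lie in the $\overline{k}[t]$-span of the entries of the direct sum plus one extra coordinate. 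Your analogy with the lower-triangular construction producing $L(s_1,\ldots,s_r)$ from depth-one data is also misplaced: that extension adjoins a new iterated-sum period satisfying the inhomogeneous equation \eqref{deql}, not a product of the existing periods.

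The remedy is simply to replace the direct-sum step by a further Kronecker product, exactly as you already did for the individual powers; then the ``main obstacle'' you flag disappears and your verifications of (i)--(iv) go through immediately.
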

\begin{proof}
We take triple $(\Phi_i, \psi_i, \zeta_A({\mathfrak s}_i;{\boldsymbol \epsilon}_i))$ which satisfies Lemma \ref{amzproperty} for each $i$. Then we consider the Kronecker product $\otimes$ (See \cite[Chapter 8]{Sc}) of $\Phi_i$ and $\psi_i$ respectively as followings:
\[
	\Phi:=\Phi_1^{\otimes m_1}\otimes\cdots\otimes\Phi_n^{\otimes m_n},\quad \psi:=\psi_1^{\otimes m_1}\otimes\cdots\otimes\psi_n^{m_n}.
\]
By our assumption, $(\Phi_i, \psi_i, \zeta_A({\mathfrak s}_i;{\boldsymbol \epsilon}_i))$ satisfy Lemma \ref{amzproperty} and thus by using the property of Kronecker product which involves matrix multiplication (cf. \cite[Theorem 7.7]{Sc}),  the triple $(\Phi, \psi, \zeta_A(\mathfrak{s}_1;\boldsymbol{\epsilon}_1)^{m_1} \cdots \zeta_A(\mathfrak{s}_n;\boldsymbol{\epsilon}_n)^{m_n})$ does so.  
\end{proof}

\begin{defn}
Let $\zeta_A({\mathfrak s}_1;{\boldsymbol \epsilon}_1 ), \ldots, \zeta_A({\mathfrak s}_n;{\boldsymbol \epsilon}_n )$ be AMZVs of ${\rm wt}({\mathfrak s}_i)=w_i$ $(i=1, \ldots, n)$. For $m_1, \ldots, m_n\in\mathbb{Z}_{\geq0}$ not all zero, we define the total weight of the monomial $\zeta_A({\mathfrak s}_1;{\boldsymbol \epsilon}_1 )^{m_1} \cdots \zeta_A({\mathfrak s}_n;{\boldsymbol \epsilon}_n )^{m_n}$ as 
\[
	\sum^n_{i=1}m_iw_i.
\]
For $w\in\mathbb{N}$, we denote $AZ_{w}$ the set of monomials of AMZVs with total weight $w$.  
\end{defn}
We note that  $AZ_{w}$ is finite set.

Now we prove the linear independence of monomials of AMZVs.
\begin{thm}\label{linindamz}
Let $w_1, \ldots, w_l\in\mathbb{N}$ be distinct. We suppose that $V_i$ is a $k$-linearly independent subset of $AZ_{w_i}$ for $i=1, \ldots, l$. Then the following union
\[
    \{ 1 \}\bigcup_{i=1}^{l}V_{i}
\]
is a linearly independent set over $\overline{k}$, that is, there are no nontrivial $\overline{k}$-linear relation among elements of $\{ 1 \}\bigcup_{i=1}^{l}V_{i}$.
\end{thm}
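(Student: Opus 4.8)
The plan is to follow Chang's strategy for MZVs \cite[Theorem 2.2.1]{C1}, reducing the $\overline{k}$-linear independence statement to an application of the ABP criterion (Theorem \ref{abpcri}) via the ``MZ property'' machinery already set up for AMZVs in Lemma \ref{amzproperty} and Proposition \ref{monoamzprop}. First I would argue that it suffices to treat a single total weight: suppose there were a nontrivial $\overline{k}$-relation among $\{1\}\cup\bigcup_i V_i$. Using the Frobenius-twist behaviour of periods (condition (iv) of Lemma \ref{amzproperty}), a relation mixing monomials of different total weights $w_1,\dots,w_l$ together with the constant $1$ can be separated: applying the twisting action and comparing growth/vanishing orders at $t=\theta^{q^N}$ forces each homogeneous graded piece to vanish independently, and also forces the coefficient of $1$ to vanish. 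This is exactly the graded-separation argument in \cite{C1}, and the $\epsilon$-twist only contributes harmless factors $c^N\in\mathbb{F}_q^\times$ which do not affect the valuation bookkeeping. So we reduce to showing: if $V$ is a $k$-linearly independent subset of $AZ_w$ (monomials of fixed total weight $w$), then $V$ is $\overline{k}$-linearly independent.

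Next I would assemble, for the finitely many monomials $f_1,\dots,f_m\in V$, a single block-upper-triangular pair $(\Phi,\psi)$ by taking (via Proposition \ref{monoamzprop}) triples $(\Phi_j,\psi_j,f_j)$ each satisfying (i)--(iv) of Lemma \ref{amzproperty}, and then forming a direct-sum/fibre-product pre-$t$-motive whose period vector $\psi(\theta)$ has entries including $1/\tilde\pi^w$ (from the shared first coordinate, since all total weights are $w$) and the $a_jb_jf_j/\tilde\pi^w$. Concretely one builds $\Phi$ so that $\det\Phi=c(t-\theta)^s$ and the last column is $(0,\dots,0,1)^{\mathrm{tr}}$, preserving the hypotheses of Theorem \ref{abpcri}; the key point is that the common first entry $1/\tilde\pi^w$ ties the blocks together. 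Suppose $\sum_j \beta_j f_j=0$ with $\beta_j\in\overline{k}$ not all zero. Then $\rho:=(\,*,\dots,0,\dots,\beta_1 a_1 b_1,\dots,\beta_m a_m b_m\,)$ (with an appropriate entry $\beta_0$ in the slot corresponding to $1/\tilde\pi^w$, chosen so $\rho\psi(\theta)=0$; here we need $\sum_j \beta_j a_j b_j f_j/\tilde\pi^w + \beta_0/\tilde\pi^w=0$, which determines $\beta_0\in\overline{k}$) annihilates $\psi(\theta)$.

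By ABP there is $P\in\mathrm{Mat}_{1\times d}(\overline{k}[t])$ with $P(\theta)=\rho$ and $P\psi=0$ identically. The endgame is then to specialise $P\psi=0$ at $t=\theta^{q^N}$ for all $N$ and invoke condition (iv): only the last entry of $\psi(\theta^{q^N})$ survives in each block, so $P\psi=0$ yields $\sum_j P_{(j)}(\theta^{q^N})\,a_jc_j^N(b_jf_j/\tilde\pi^w)^{q^N}=0$ for all $N$, where $P_{(j)}$ is the last-column component of $P$ in block $j$. A Vandermonde/valuation argument on $N$ (exactly as in \cite[\S4]{C1}; this is where the $\epsilon$-twist factors $c_j^N$ must be controlled --- one partitions the $f_j$ according to the value of $c_j\in\mathbb{F}_q^\times$ and handles each class separately, or absorbs $c_j^N$ into the comparison of $|\theta^{q^N}|_\infty$-adic sizes) forces each $P_{(j)}(\theta)=0$, hence (after descent through the triangular structure) that the original relation descends to a $k$-linear relation among the $f_j$ --- contradicting $k$-linear independence of $V$.

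The main obstacle I expect is precisely this last descent step: ensuring that the $\overline{k}$-coefficients are actually forced down to $k$, and that the units $\gamma_i\in\overline{\mathbb{F}}_q^\times$ (the chosen $(q-1)$st roots of the $\epsilon_i$) and the sign-like twists $c_j\in\mathbb{F}_q^\times$ do not obstruct the argument. The subtlety compared to the MZV case is that the period entries now carry the extra algebraic factors $a_j=\prod\gamma_i\in\overline{\mathbb{F}}_q^\times$, so one must check that these factors are invertible constants that can be cleared, and that the graded-separation and Vandermonde arguments see only their valuations (all trivially $1$ in $|\cdot|_\infty$), so they genuinely cause no harm. Once that is verified, the structure of Chang's proof carries over verbatim.
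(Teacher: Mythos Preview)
Your overall architecture is right and matches the paper: package each monomial via Proposition \ref{monoamzprop}, take a direct sum normalized to the top weight $w_l$ by multiplying by $(t-\theta)^{w_l-w_i}$ and $\Omega^{w_l-w_i}$, apply ABP, and specialize at $t=\theta^{q^N}$ so that $\Omega^{w_l-w_i}|_{t=\theta^{q^N}}=0$ kills all but the top weight. Two remarks on details. First, no fibre product is needed: the paper simply takes the block direct sum $\tilde\Phi=\bigoplus_{i,j}(t-\theta)^{w_l-w_i}\Phi_{ij}$, $\tilde\psi=\bigoplus_{i,j}\Omega^{w_l-w_i}\psi_{ij}$; there is no ``common first entry tying the blocks together'', and the vector $\rho$ encoding the putative relation just has zeros in the irrelevant slots. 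Second, your worries about the constants $a_j\in\overline{\mathbb{F}}_q^\times$ and $c_j\in\mathbb{F}_q^\times$ are well placed but resolved automatically by the argument below; no partition by the value of $c_j$ is needed.

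There is, however, a genuine gap in your endgame. After ABP and specialization you obtain, for all large $N$, relations $\sum_j P_{(j)}(\theta^{q^N})\,a_jc_j^N\,(b_jZ_{lj})^{q^N}=0$. You then propose a ``Vandermonde/valuation argument'' that ``forces each $P_{(j)}(\theta)=0$'', and claim this yields a $k$-relation. That conclusion cannot be right: if every $P_{(j)}(\theta)=0$ you have no relation at all, not a $k$-relation. More to the point, a Vandermonde separation in $N$ would require the $Z_{lj}$ (or the pairs $(c_j,Z_{lj})$) to be distinguishable a priori, and that is exactly what you do not know --- you are trying to prove their $\overline{k}$-independence. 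Taking $q^N$th roots merely reproduces a $\overline{k}$-relation among the $Z_{lj}$, which you already have.

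The paper's descent (its Lemma \ref{laslem}) uses a different mechanism. Assume minimality: $\{Z_2,\dots,Z_m\}$ is $\overline{k}$-independent and $Z_1$ lies in their span. Normalize the ABP output $\mathbf{F}'$ so that the last entry in block $1$ equals $1$. Now use the functional equation: from $\mathbf{F}'\psi=0$ one also has $\mathbf{F}'^{(-1)}\Phi\psi=0$, hence $(\mathbf{F}'-\mathbf{F}'^{(-1)}\Phi)\psi=0$. Because the last column of each $\Phi_j$ is $(0,\dots,0,1)^{\mathrm{tr}}$, the last entry in block $j$ of the difference $\mathbf{F}'-\mathbf{F}'^{(-1)}\Phi$ is exactly $\mathbf{f}'_{j,d_j}-\mathbf{f}'^{(-1)}_{j,d_j}$, and in block $1$ this is $1-1=0$. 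If some $\mathbf{f}'_{j,d_j}-\mathbf{f}'^{(-1)}_{j,d_j}\neq 0$ for $j\geq 2$, specializing at $t=\theta^{q^N}$ gives a nontrivial $\overline{k}$-relation among $Z_2,\dots,Z_m$ only, contradicting minimality. Hence every $\mathbf{f}'_{j,d_j}$ is fixed by Frobenius twisting, so lies in $\mathbb{F}_q(t)$, and evaluating at $t=\theta$ produces a nontrivial relation with coefficients in $k$. That is the step you are missing; the $c_j^N$ factors are harmlessly absorbed in this argument and no Vandermonde is involved.
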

\begin{proof}
We may assume that $w_l>\cdots>w_1$ without loss of generality. For each $i=1, \ldots, l$,  
$AZ_{w_i}$ is a finite set by definition and thus its subset $V_{i}$ is also finite. Let $V_{i}$ consist of $\{ Z_{i1}, \ldots, Z_{im_i}  \}$ where $Z_{ij}\in AZ_{w_i} (j=1, \ldots, m_i)$ are the same total weight $w_i$. 
The proof is by induction on weight $w_l$.

We require on the contrary that $\{1\}\bigcup_{i=1}^lV_i$ is $\overline{k}$-linearly dependent set. 
Then we may also assume that there are nontrivial $\overline{k}$-linear relations 
\[ 
a_0\cdot 1+a_{11}Z_{11}+\cdots+a_{1m_1}Z_{1m_1}+\cdots+a_{l1}Z_{l1}+\cdots+a_{lm_l}Z_{lm_l}=0,
\]
we may take $a_0, a_{11}, \ldots, a_{lm_l}\in\overline{k}$ with $a_{li}\neq 0$ for some $i=1, \ldots, m_l$.

We proceed our proof by assuming the existence of nontrivial $\overline{k}$-linear relations between $V_l$ and $\{1\}\bigcup_{i=1}^{l-1}V_i$. 

For $1\leq i\leq l$ and $1\leq j\leq m_l$, by combining Proposition \ref{rat} with Proposition \ref{monoamzprop}, there exist the matrices 
\begin{align}\label{mats}
    \Phi_{ij}\in{\rm Mat}_{d_{ij}}(\overline{k}[t])\quad {\rm and}\quad \psi_{ij}\in{\rm Mat}_{d_{ij}\times 1}({\mathbb E})
\end{align}
so that $d_{ij}\geq 2$ and each $(\Phi_{ij}, \psi_{ij}, Z_{ij})$ satisfy Lemma \ref{amzproperty}.

For the matrix $\Phi_{ij}$ and the column vector $\psi_{ij}$, we define the following block diagonal matrix and the column vector 
\[
    \tilde{\Phi}:=\bigoplus_{i=1}^{l}\biggl( \bigoplus^{m_i}_{j=1}(t-\theta)^{w_l-w_i}\Phi_{ij} \biggr)\quad \text{and}\quad \tilde{\psi}:=\bigoplus^{l}_{i=1}\biggl( \bigoplus^{m_i}_{j=1}\Omega^{w_l-w_i}\psi_{ij} \biggr).
\]
In this proof, we define the direct sum of any column vectors ${\bf v}_1, \ldots, {\bf v_m}$ whose entries belong to $\mathbb{C}_{\infty}((t))$ by $\bigoplus^{m}_{i=1}{\bf v}_i:=({\bf v}_1^{\rm tr}, \ldots, {\bf v}_m^{\rm tr})^{\rm tr}$.  

By the requirement, $\{ 1 \}\bigcup_{i=1}^l V_i$ is a linearly dependent over $\overline{k}$. Thus there exists a nonzero vector
\[
	\rho=({\bf v}_{11}, \ldots, {\bf v}_{1m_1}, \ldots, {\bf v}_{l1}, \ldots, {\bf v}_{lm_l}) 
\] 
 such that 
\begin{align*}
 \rho\cdot(\tilde{\psi}|_{t=\theta})&=\rho\cdot\bigoplus_{i=1}^{l}\bigoplus^{m_i}_{j=1}\Biggl(\frac{1}{\tilde{\pi}^{w_l}}, \ldots, a\frac{bZ_{ij}}{\tilde{\pi}^{w_l}}\Biggr)^{\rm tr}\\
 &=\frac{1}{\tilde{\pi}^{w_l}}({\bf v}_{11}, \ldots, {\bf v}_{1m_1}, \ldots, {\bf v}_{l1}, \ldots, {\bf v}_{lm_l}) \bigoplus^{l}_{i=1}\bigoplus^{m_i}_{j=1}\Biggl(1, \ldots, abZ_{ij}\Biggr)^{\rm tr}=0, 
\end{align*}
 where ${\bf v}_{ij}\in {\rm Mat}_{1\times d_{ij}}(\overline{k})$ for $1\leq i\leq l$ and $1\leq j\leq m_i$. Then we have nontrivial $\overline{k}$-linear relation 
 \[
 	({\bf v}_{11}, \ldots, {\bf v}_{1m_1}, \ldots, {\bf v}_{l1}, \ldots, {\bf v}_{lm_l}) \bigoplus^{l}_{i=1}\bigoplus^{m_i}_{j=1}\Biggl(1, \ldots, abZ_{ij}\Biggr)^{\rm tr}=0.
 \]
 In the beginning of this proof, we assumed that there exists nontrivial $\overline{k}$-linear relations between $V_l$ and $\{1 \}\bigcup_{i=1}^{l-1}V_i$ and then for some $1\leq s\leq m_l$, the last entry of ${\bf v}_{ls}$ is nonzero. Since the last entry in ${\bf v}_{li}$ is coefficient of $abZ_{li}$ for $1\leq i\leq m_l$ in the above relation. By using Theorem \ref{abpcri}, we have ${\bf F}:=({\bf f}_{11}, \ldots, {\bf f}_{1m_1}, \ldots, {\bf f}_{l1}, \ldots, {\bf f}_{lm_l})$ where ${\bf f}_{ij}=(f_{i1}, \ldots, f_{id_{ij}})\in{\rm Mat}_{1\times d_{ij}}(\overline{k}[t])$ for $1\leq i\leq l$, $1\leq j\leq m_i$ and it satisfies 
 \[
 	{\bf F}\tilde{\psi}=0\quad \text{and}\quad {\bf F}|_{t=\theta}=\rho. 
 \] 
 The last entry of ${\bf f}_{ls}$ is a nontrivial polynomial because the last entry of ${\bf v}_{ls}$ is not zero. We choose a sufficiently large $N\in\mathbb{Z}$ so that ${\bf f}_{ls}|_{t=\theta^{q^N}}\neq 0$. 
We rewrite the equation $({\bf F}\tilde{\psi})|_{t=\theta^{q^N}}=0$ by using $\Omega|_{t=\theta^{q^N}}=0$, Lemma \ref{amzproperty} (iv) and the definition of $\tilde{\psi}$ as follows:
\begin{align*}
	&({\bf F}\tilde{\psi})|_{t=\theta^{q^N}}\\
	&=({\bf f}_{11}, \ldots, {\bf f}_{1m_1}, \ldots, {\bf f}_{l1}, \ldots, {\bf f}_{lm_l})|_{t=\theta^{q^N}}\bigoplus^{l}_{i=1}\bigoplus^{m_i}_{j=1}\Omega^{w_l-w_i}|_{t=\theta^{q^N}}\Biggl(0, \ldots, 0, a_jc_j^{N}\Bigl(\frac{b_jZ_{ij}}{\tilde{\pi}^{w_i}}\Bigr)^{q^N}\Biggr)^{\rm tr}\\
	&=({\bf f}_{11}, \ldots, {\bf f}_{1m_1}, \ldots, {\bf f}_{l1}, \ldots, {\bf f}_{lm_l})|_{t=\theta^{q^N}}\bigoplus^{m_l}_{j=1}\Biggl(0, \ldots, 0, a_jc_j^{N}\Bigl(\frac{b_jZ_{lj}}{\tilde{\pi}^{w_l}}\Bigr)^{q^N}\Biggr)^{\rm tr}\\
	&=(f_{11}, \ldots, f_{1d_{11}}, \ldots, f_{l1}, \ldots, f_{ld_{lm_l}})|_{t=\theta^{q^N}}\bigoplus^{m_l}_{j=1}\Biggl(0, \ldots, 0, a_jc_j^{N}\Bigl(\frac{b_jZ_{lj}}{\tilde{\pi}^{w_l}}\Bigr)^{q^N}\Biggr)^{\rm tr}\\
	&=\sum_{j=1}^{m_l}(f_{ld_{lj}}|_{t=\theta^{q^N}})a_jc_j^{N}\Bigl(\frac{b_jZ_{lj}}{\tilde{\pi}^{w_l}}\Bigr)^{q^N}=0.
\end{align*}
Thus we obtain the following nontrivial $\overline{k}$-linear relation with some $f_{ld_{ls}}\neq0$:
\[
	\sum_{j=1}^{m_l}(f_{ld_{lj}}|_{t=\theta^{q^N}})a_jc_j^{N}\Bigl(b_jZ_{lj}\Bigr)^{q^N}=0.
\] 
 Therefore by taking $q^N$th root of the above $\overline{k}$-linear relation, we get a nontrivial relation for $\{  Z_{l1}, \ldots, Z_{lm_l}  \}$ as follows. 
\[
	\sum_{j=1}^{m_l}\Bigl\{(f_{ld_{lj}}|_{t=\theta^{q^N}})a_jc_j^{N}\Bigr\}^{\frac{1}{q^N}}b_jZ_{lj}=0.
\] 
This shows that $V_l$ is a $\overline{k}$-linearly dependent set.  
Therefore by using Lemma \ref{laslem}, we can show that $V_l$ is a $k$-linearly dependent subset.
However, it contradicts the condition saying that $V_l$ is the $k$-linearly independent set. Therefore we obtain the claim. 
\end{proof}

\begin{lem}\label{laslem}
Let $V$ be a finite $\overline{k}$-linearly dependent subset of $AZ_{w}$. Then $V$ is a finite $k$-linearly dependent subset of $AZ_{w}$. 
\end{lem}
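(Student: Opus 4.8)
The statement is a descent of the base field from $\overline{k}$ to $k$, and the plan is to carry it out in two steps: first passing from $\overline{k}$ to $k_{\infty}$, and then from $k_{\infty}$ to $k$. Write $V=\{Z_1,\dots,Z_n\}\subseteq AZ_w$ and suppose $\sum_i\alpha_iZ_i=0$ is a nontrivial $\overline{k}$-linear relation; choose one with the fewest nonzero coefficients and rescale so that $\alpha_1=1$. For the first step I would use that each $Z_i$ lies in $k_{\infty}$ while $\overline{k}\subseteq\overline{k_{\infty}}$, so every $\sigma\in\operatorname{Aut}(\overline{k_{\infty}}/k_{\infty})$ fixes all the $Z_i$; applying $\sigma$ and subtracting the original relation annihilates the $Z_1$-term, so by minimality $\sigma(\alpha_i)=\alpha_i$ for every $i$ and $\sigma$. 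Hence each $\alpha_i$ is purely inseparable over $k_{\infty}$, so $\alpha_i^{q^N}\in k_{\infty}$ for a common $N$, and in fact $\alpha_i^{q^N}\in\overline{k}\cap k_{\infty}$, which is separable over $k$ since $k_{\infty}/k$ is a separable extension. Raising the relation to the $q^N$-th power and using the Frobenius compatibility $\zeta_A(\mathfrak{s};\boldsymbol{\epsilon})^{q}=\zeta_A(q\mathfrak{s};\boldsymbol{\epsilon})$ (valid because $\epsilon_i\in\mathbb{F}_q^{\times}$) produces a nontrivial $k_{\infty}$-linear relation among $Z_1^{q^N},\dots,Z_n^{q^N}$, which are again monomials of AMZVs, all of the common weight $W:=q^Nw$, with coefficients that are moreover separable algebraic over $k$.

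The second step is the heart of the matter. Here I would bring in the period interpretation of Theorem \ref{perint}, applied through the Kronecker-product construction of Proposition \ref{monoamzprop} to each $Z_i^{q^N}$: up to a factor in $k^{\times}$, $Z_i^{q^N}/\tilde{\pi}^{W}$ is the value at $t=\theta$ of an entire function sitting inside a rigid analytic trivialization, and these functions obey the Frobenius-twisting identity of Lemma \ref{amzproperty}(iv), whose auxiliary constants lie in $\mathbb{F}_q^{\times}$ and $\overline{\mathbb{F}}_q^{\times}$. Assembling the corresponding block matrices and feeding the $k_{\infty}$-relation — whose coefficients, being algebraic over $k$, satisfy the hypotheses of the ABP criterion (Theorem \ref{abpcri}) — into that criterion exactly as in the proof of Theorem \ref{linindamz}, then specializing the resulting polynomial identity at $t=\theta^{q^M}$ for large $M$ and taking $q^M$-th roots, one extracts a new nontrivial relation among the $Z_i^{q^N}$. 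The point to push through is that the only quantities entering this relation which are not already rational in $\theta$ are roots of unity together with a factor coming from $k^{1/q^M}$; granting this, one descends its coefficients to $k$ by expanding them in a $k$-basis of a suitable finite subextension of $\overline{\mathbb{F}}_q(\theta^{1/q^M})$ over $k$ and invoking that such a subextension is linearly disjoint from $k_{\infty}$ over $k$, being the compositum of a constant extension $\mathbb{F}_{q^s}/\mathbb{F}_q$ and a purely inseparable extension $k^{1/q^M}/k$, each of which is linearly disjoint from $k_{\infty}$ over $k$. This yields a nontrivial $k$-linear relation among $Z_1^{q^N},\dots,Z_n^{q^N}$; expanding its coefficients in a $k$-basis of $k^{1/q^N}$ and using the linear disjointness of $k^{1/q^N}$ and $k_{\infty}$ over $k$, one transports it back to a nontrivial $k$-linear relation among $Z_1,\dots,Z_n$, so that $V$ is $k$-linearly dependent.

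I expect the first step to be a routine Galois-descent argument, and the two unscrambling descents that bracket the second step to be bookkeeping with linear disjointness. The main obstacle is the core of the second step: the ABP criterion only guarantees a polynomial vector defined over $\overline{k}[t]$, so one must verify that, once the coefficients of the input relation have been made separable algebraic over $k$ in the first step and every auxiliary twisting constant is a root of unity, the $q^M$-specialization cannot introduce genuinely new coefficients transcendental in $\theta$ — i.e. that the relation it outputs really is defined over $\overline{\mathbb{F}}_q(\theta^{1/q^M})$ rather than over a larger piece of $\overline{k}$. It is this weight-homogeneity-driven confinement of the coefficients, rather than the formal field theory, that carries the weight of the argument.
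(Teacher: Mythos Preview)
Your proposal has a genuine gap precisely where you flag the ``main obstacle'': after invoking the ABP criterion, you obtain a vector $P\in\operatorname{Mat}_{1\times d}(\overline{k}[t])$ with $P\psi=0$, and specializing at $t=\theta^{q^M}$ gives entries in $\overline{k}$ with no a priori control on which subfield they land in. Your hope that the output relation ``really is defined over $\overline{\mathbb{F}}_q(\theta^{1/q^M})$'' is unsupported; nothing in the ABP machinery or in the preliminary Galois descent of your first step forces this. The linear-disjointness bookkeeping you outline afterward is fine, but it only applies once the coefficients have already been confined to a specific finite extension, which you have not achieved.

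The paper's argument supplies exactly the missing mechanism, and it renders your entire first step unnecessary. One normalizes the ABP output so that one distinguished ``last entry'' equals $1$, obtaining a vector $\mathbf{F}'$ with $\mathbf{F}'\psi=0$. The key observation is that then also $(\mathbf{F}'-\mathbf{F}'^{(-1)}\Phi)\psi=0$, and because the last column of each block of $\Phi$ is $(0,\ldots,0,1)^{\mathrm{tr}}$, the ``last entries'' of $\mathbf{F}'-\mathbf{F}'^{(-1)}\Phi$ are precisely $\mathbf{f}'_{j,d_j}-\mathbf{f}'^{(-1)}_{j,d_j}$; the normalized one vanishes automatically. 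If any of the remaining ones were nonzero, specializing at $t=\theta^{q^N}$ would produce (via Lemma~\ref{amzproperty}(iv)) a nontrivial $\overline{k}$-relation among $Z_2,\ldots,Z_m$ \emph{with $Z_1$ absent}, contradicting the assumed $\overline{k}$-independence of that subset. Hence every $\mathbf{f}'_{j,d_j}$ is fixed by the Frobenius twist, so lies in $\mathbb{F}_q(t)$, and its value at $t=\theta$ lies in $k$. Substituting $t=\theta$ in $\mathbf{F}'\psi=0$ then yields the desired $k$-relation directly. The descent to $k$ is thus obtained not by tracking fields through specializations, but by exploiting the Frobenius difference equation to force invariance of the relevant coefficients.
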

\begin{proof}
We put $V=\{Z_{1}, \ldots, Z_{m}\}$. Without loss of generality, we may assume that $m\geq 2$ by Theorem \ref{nonvan} and 
\begin{align}\label{dim}
	{\rm dim}_{\overline{k}}{\rm Span}_{ \overline{k} }\{ V \}=m-1.
\end{align}
Again we may assume that $Z_{1}\in{\rm Span}_{\overline{k}}\{  Z_{2}, \ldots, Z_{m}  \}$, then by the assumption \eqref{dim}, $\{ Z_{2}, \ldots, Z_{m} \}  $ is a linearly independent set over $\overline{k}$. 
As in the proof of our previous theorem, we take the matrix $\Phi_{j}$, and the column vector $\psi_{j}$ $(1\leq j\leq m)$ so that the triple $(\Phi_{j}, \psi_{j}, Z_{j})$ satisfying Lemma \ref{amzproperty} (i)-(iv), we define block diagonal matrix $\Phi$ and column vector $\psi$ as follows. 
\[
	\Phi:=\bigoplus_{j=1}^{m}\Phi_{j}\quad \text{and}\quad \psi:=\bigoplus_{j=1}^{m}\psi_{j}. 
\]
In the above, we again define the direct sum of column vectors ${\bf v}_1, \ldots, {\bf v_m}$ whose entries belong to $\mathbb{C}_{\infty}((t))$ by $\bigoplus^{m}_{i=1}{\bf v}_i:=({\bf v}_1^{\rm tr}, \ldots, {\bf v}_m^{\rm tr})^{\rm tr}$. 
By definition, each $(\Phi_{j}, \psi_{j}, Z_{j})$ satisfy Lemma \ref{amzproperty} (i)-(iv) and then we have 
\begin{align}\label{psiw1}
                         \psi|_{t=\theta}=\bigoplus_{j=1}^{m}\Biggl(\frac{1}{\tilde{\pi}^{w}}, \ldots, a_j\frac{b_jZ_{j}}{\tilde{\pi}^{w}}\Biggr)^{\rm tr}
\end{align}
for some $a_j\in\overline{\mathbb{F}}_q^{\times}$, $b_j\in k^{\times}$ and  
 \begin{align}\label{psinw1}
                         \psi|_{t=\theta^{q^N}}=\bigoplus_{j=1}^{m}\Biggl(0, \ldots, 0,  a_jc_j^{N}\Bigl(\frac{b_jZ_{j}}{\tilde{\pi}^{w}}\Bigr)^{q^{N}}\Biggr)^{\rm tr}.
\end{align}
for some $c_j\in \mathbb{F}_q^{\times}$, $N\in\mathbb{N}$.
By using Theorem \ref{abpcri}, there exist row vectors ${\bf f}_j=({\bf f}_{j, 1}, \ldots, {\bf f}_{j, d_{j}})\in{\rm Mat}_{1\times d_{j}}(\overline{k}[t])$ ($j = 1, \ldots, m$) so that if we put ${\bf F}=({\bf f}_1, \ldots, {\bf f}_{m})$, then we have
\[
	{\bf F}\psi=0, \\ \text{ ${\bf f}_{1, d_{1}}|_{t=\theta}=1$  and  ${\bf f}_{j, i}|_{t=\theta}=0$ for $1\leq i <d_{j}$} 
\] 
since ${\bf f}_{1, d_{1}}|_{t=\theta}$ is coefficient of $a_1b_1Z_{1}/(\tilde{\pi}^{w})$ in $({\bf F}\psi)|_{t=\theta}=0$ and by the assumption \eqref{dim}, $Z_{1}$ is expressed by nontrivial $\overline{k}$-linear combinations of $Z_{2}, \ldots, Z_{m}$.   
We write ${\bf F}':=(1/{\bf f}_{1, d_{1}}){\bf F}$ and $d:=\sum_{j=1}^{m}d_{j}$. Note that the vector ${\bf F}'$ is of the form 
\[
	{\bf F}'=({\bf f}'_{1,1}, \ldots, {\bf f}'_{1, d_{1}}, \ldots, {\bf f}'_{m,1}, \ldots, {\bf f}'_{m, d_{m}} )\in{\rm Mat}_{1\times d}(\overline{k}(t))
\]
where ${\bf f}'_{1, d_{1}}=1$. We have the following from ${\bf F}\psi=0$ and ${\bf f}_{j, i}|_{t=\theta}=0$ for $1\leq i <d_{j}$: 
\begin{align}\label{qpsi0}
	{\bf F}'\psi=0\ \text{and}\ {\bf f}'_{j, i}|_{t=\theta}=0\ \text{ for all $1\leq i< d_{j}$}.
\end{align}

By using Lemma \ref{amzproperty}, we obtain ${\bf F}'^{(-1)}\Phi\psi=({\bf F}'\psi)^{(-1)}=0$ and thus
\begin{align}\label{eqnQ}
	{\bf F}'\psi-{\bf F}'^{(-1)}\Phi\psi=({\bf F}'-{\bf F}'^{(-1)}\Phi)\psi=0.
\end{align}
The last column of the matrix $\Phi_{j}$ is $(0, \ldots, 0, 1)^{\rm tr}$ for each $j$ and consequently, the $d_{1}$-th entry of row vector ${\bf F'}-{\bf F'}^{(-1)}\Phi$ is zero since the $d_{1}$-th entry of the row vectors ${\bf F'}$ and ${\bf F'}^{(-1)}\Phi$ are 1. 
The $\sum_{i=1}^{j}d_{i}$-th column of $\Phi$ is 
\begin{align*}
\begin{array}{rcll}
\ldelim( {7}{4pt}[] & 0 & \rdelim) {7}{4pt}[] & \rdelim\}{4}{10pt}[$\sum_{i=1}^{j}d_{i}$] \\
& \vdots & & \\
& 0 & & \\
& 1 & &  \\
& 0 & & \\
& \vdots & & \\
&0 & &.
\end{array}
\end{align*}
Then the $\sum_{i=1}^{j}d_{i}$-th entry of the row vector ${\bf F}'^{(-1)}\Phi$ is ${\bf f'}_{j, d_{j}}^{(-1)}$. Thus the $\sum_{i=1}^{j}d_{i}$-th entry of ${\bf F}'-{\bf F}'^{(-1)}\Phi$ is written as follows.
\[ 
	{\bf f'}_{j, d_{j}}-{\bf f'}_{j, d_{j}}^{(-1)}\quad \text{ for $j=1, \ldots, m$.} 
\]
We have the equation ${\bf f'}_{j, d_{j}}-{\bf f'}_{j, d_{j}}^{(-1)}=0$ for $j=2, \ldots, m$. 
Indeed, if there exist some $2\leq j\leq m$ so that ${\bf f'}_{j, d_{j}}-{\bf f'}_{j, d_{j}}^{(-1)}\neq 0$, we can derive the contradiction in the following way: 

Let us take sufficiently large $N\in\mathbb{N}$ so that $({\bf f'}_{j, d_{j}}-{\bf f'}_{j, d_{j}}^{(-1)})|_{t=\theta^{q^N}}\neq 0$ and all entries of $({\bf F}'-{\bf F'}^{(-1)}\Phi)$ are regular at $t=\theta^{q^{N}} $. By using \eqref{psinw1} and substituting $t=\theta^{q^N}$ in \eqref{eqnQ}, we obtain
\begin{align*}
	\Bigl\{({\bf F}'-{\bf F}'^{(-1)}\Phi)\psi\Bigl\}|_{t=\theta^{q^N}}&=\Bigl\{({\bf F}'-{\bf F}'^{(-1)}\Phi)\Bigr\}|_{t=\theta^{q^N}}\bigoplus^{m}_{j=1}\Biggl(0, \ldots, 0, a_jc_j^{N}\Bigl(\frac{b_jZ_{j}}{\tilde{\pi}^{w}}\Bigr)^{q^N}\Biggr)^{\rm tr}\\
	&=\sum_{j=1}^{m}({\bf f'}_{j, d_{j}}-{\bf f'}_{j, d_{j}}^{(-1)})|_{t=\theta^{q^N}}a_jc_j^{N}\Bigl(\frac{b_jZ_{j}}{\tilde{\pi}^{w}}\Bigr)^{q^N}\\
	&=\frac{1}{\tilde{\pi}^{w}}\sum_{j=1}^{m}({\bf f'}_{j, d_{j}}-{\bf f'}_{j, d_{j}}^{(-1)})|_{t=\theta^{q^N}}a_jc_j^{N}\Bigl(b_jZ_{j}\Bigr)^{q^N}=0.
\end{align*}
Thus combining with the ${\bf f'}_{1, d_{1}}-{\bf f'}_{1, d_{1}}^{(-1)}=1-1=0$, we obtain a nontrivial $\overline{k}$-linear relations among $Z_{2}^{q^N}, \ldots, Z_{m}^{q^N}$ as follows: 
 \[
 	\sum_{j=2}^{m}({\bf f'}_{j, d_{j}}-{\bf f'}_{j, d_{j}}^{(-1)})|_{t=\theta^{q^N}}a_jc_j^{N}\Bigl(b_jZ_{j}\Bigr)^{q^N}=0.
 \]
 Then by taking $q^{N}$th root of the relation, we obtain the following nontrivial $\overline{k}$-linear relation among $Z_{2}, \ldots, Z_{m}$:
 \[
 	\sum_{j=2}^{m}\Bigl\{({\bf f'}_{j, d_{j}}-{\bf f'}_{j, d_{j}}^{(-1)})|_{t=\theta^{q^N}}a_jc_j^{N}\Bigr\}^{\frac{1}{q^N}}b_jZ_{j}=0.
 \]
 This contradicts our assumption saying that $\{ Z_{2}, \ldots, Z_{m} \}$ is a $\overline{k}$-linearly independent set. 

Therefore we get ${\bf f'}_{j, d_{j}}-{\bf f'}_{j, d_{j}}^{(-1)}=0$ for $j=2, \ldots, m$ and this equation shows the following:
 \begin{align}\label{coefrat}
 {\bf f'}_{j, d_{j}}\in k \quad (j=2, \ldots, m).
 \end{align} 
 By substituting $t=\theta$ in the equation ${\bf F'}\psi=0$ and using \eqref{psiw1}, \eqref{qpsi0}, we obtain the following equalities:
 \begin{align*}
 	({\bf F'}\psi)|_{t=\theta}&=(0, \ldots, 0, {\bf f}'_{1, d_{1}}, \ldots, 0, \ldots, 0, {\bf f}'_{m, d_{m}})|_{t=\theta}\bigoplus_{j=1}^{m}\Biggl(\frac{1}{\tilde{\pi}^{w}}, \ldots, a_j\frac{b_jZ_{j}}{\tilde{\pi}^{w}}\Biggr)^{\rm tr}\\
 					      &=\sum_{j=1}^{m}({\bf f'}_{j, d_{j}}|_{t=\theta})a_j\frac{b_jZ_{j}}{\tilde{\pi}^{w}}=\frac{1}{\tilde{\pi}^{w}}\sum_{j=1}^{m}({\bf f'}_{j, d_{j}}|_{t=\theta})a_jb_jZ_{j}=0.
 \end{align*}
By ${\bf f}'_{1, d_{1}}=1$ and \eqref{coefrat}, we have the following nontrivial $k$-linear relation among $Z_{1}, \ldots, Z_{m}$:
\[
	\sum_{j=1}^{m}({\bf f'}_{j, d_{j}}|_{t=\theta})a_jb_jZ_{j}=0.
\]
Therefore we obtain the claim. 
\end{proof}
The above theorem provides an alternating analogue of Theorem 2.2.1 in \cite{C1}. Finally, we define the following notations and state the result.  
\begin{nota}
We denote $\overline{\mathcal{ AZ }}_w$ (resp. $\mathcal{ AZ }_w$) be the $\overline{k}$-vector space (resp. $k$-vector space) spanned by weight $w$ AMZVs. 
By Theorem \ref{shamz}, we derive $\mathcal{ AZ }_w\cdot\mathcal{ AZ}_{w'} \subseteq \mathcal{AZ}_{w+w'}$. We also note the $\overline{k}$-algebra $\overline{\mathcal{ AZ }}$ (resp. $k$-algebra $\mathcal{ AZ }$) generated by AMZVs.
\end{nota}

\begin{thm}\label{gonamz}
We have the following:
\begin{itemize}
\item[(i)] $\overline{\mathcal{AZ}}$ forms an weight-graded algebra, that is, $\overline{\mathcal{AZ}}=\overline{k}\bigoplus_{w\in\mathbb{N}}\overline{\mathcal{AZ}}_w$;
\item[(ii)] $\overline{\mathcal{AZ}}$ is defined over $k$, that is, we have the canonical map $\overline{k}\otimes_{k}\mathcal{AZ}\rightarrow\overline{\mathcal{AZ}}$ which is bijective.  
\end{itemize}
\end{thm}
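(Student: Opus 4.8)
The plan is to derive both statements directly from the linear independence result Theorem \ref{linindamz}, using the sum-shuffle relations of Theorem \ref{shamz} only to identify the relevant spanning sets. First I would record that $\overline{\mathcal{AZ}}_w$ is exactly the $\overline{k}$-span of $AZ_w$: an AMZV of weight $w$ is a monomial in $AZ_w$, and conversely, by iterating Theorem \ref{shamz}, a total-weight-$w$ monomial of AMZVs is an $\mathbb{F}_p$-linear combination of weight-$w$ AMZVs. Since $\overline{\mathcal{AZ}}$ is generated as a $\overline{k}$-algebra by AMZVs, it follows that $\overline{\mathcal{AZ}}=\overline{k}+\sum_{w\in\mathbb{N}}\overline{\mathcal{AZ}}_w$, and the same holds with $k$ in place of $\overline{k}$ for $\mathcal{AZ}$.

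For (i), to prove this sum is direct, suppose $a_0+\sum_{i=1}^{l}x_{w_i}=0$ with $a_0\in\overline{k}$, $x_{w_i}\in\overline{\mathcal{AZ}}_{w_i}$ and $w_1,\dots,w_l\in\mathbb{N}$ distinct. Since $AZ_{w_i}$ is finite, I would choose a $k$-basis $V_i$ of $\mathcal{AZ}_{w_i}$ with $V_i\subseteq AZ_{w_i}$; then $V_i$ also spans $\overline{\mathcal{AZ}}_{w_i}$ over $\overline{k}$, so each $x_{w_i}$ is a $\overline{k}$-linear combination of elements of $V_i$. Theorem \ref{linindamz} says precisely that $\{1\}\cup\bigcup_{i}V_i$ is $\overline{k}$-linearly independent, which forces $a_0=0$ and $x_{w_i}=0$ for all $i$. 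Hence $\overline{\mathcal{AZ}}=\overline{k}\bigoplus_{w\in\mathbb{N}}\overline{\mathcal{AZ}}_w$; the identical argument (Theorem \ref{linindamz} in particular yields $k$-linear independence) gives $\mathcal{AZ}=k\bigoplus_{w\in\mathbb{N}}\mathcal{AZ}_w$, which I will use in (ii).

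For (ii), surjectivity of the canonical map $\overline{k}\otimes_k\mathcal{AZ}\to\overline{\mathcal{AZ}}$ is immediate, since the generating monomials of $\overline{\mathcal{AZ}}$ already lie in $\mathcal{AZ}$. For injectivity I would reduce, via the two weight gradings just established, to showing that $\overline{k}\otimes_k\mathcal{AZ}_w\to\overline{\mathcal{AZ}}_w$ is injective for each $w$ (the $\overline{k}$-summand injecting trivially). Fixing a $k$-basis $B_w\subseteq AZ_w$ of $\mathcal{AZ}_w$, Theorem \ref{linindamz} applied with the single weight $w$ shows $B_w$ is $\overline{k}$-linearly independent, so the images of $\{1\otimes b:b\in B_w\}$ are $\overline{k}$-linearly independent and the map on $\overline{k}\otimes_k\mathcal{AZ}_w$ is injective; reassembling over all $w$ using directness of both gradings gives injectivity of the full map. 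I do not anticipate a real obstacle here: the entire content sits in Theorem \ref{linindamz}, and the only point needing a little care is the compatibility of the two weight decompositions with scalar extension — that a $k$-basis of $\mathcal{AZ}_w$ drawn from $AZ_w$ stays $\overline{k}$-spanning and $\overline{k}$-linearly independent in $\overline{\mathcal{AZ}}_w$ — which is exactly what Theorem \ref{shamz} and Theorem \ref{linindamz} provide.
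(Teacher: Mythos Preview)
Your proposal is correct and follows exactly the intended route: the paper does not give an explicit proof of this theorem but presents it as a direct consequence of Theorem~\ref{linindamz} together with the sum-shuffle relations (Theorem~\ref{shamz}) recorded in the preceding Notation. Your argument supplies precisely those details---using Theorem~\ref{shamz} to identify $\overline{\mathcal{AZ}}_w$ with the $\overline{k}$-span of $AZ_w$, then invoking Theorem~\ref{linindamz} on $k$-bases $V_i\subseteq AZ_{w_i}$ for directness and for the descent of bases in~(ii)---and there is no gap.
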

A direct consequence of Theorem \ref{gonamz} is the following transcendence result.
\begin{cor}
Each AMZV $\zeta_A({\mathfrak a};{\boldsymbol \epsilon})$ is transcendental over $k$. 
\end{cor}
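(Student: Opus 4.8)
The plan is to read this corollary off the linear independence theorem (Theorem~\ref{linindamz}) together with non-vanishing (Theorem~\ref{nonvan}), in exactly the way the classical transcendence statements for Carlitz zeta values and for MZVs are extracted from their $\overline{k}$-linear independence. The one conceptual point to record first is that, for any element $\xi\in\mathbb{C}_{\infty}$, being transcendental over $k$ is the same as not belonging to $\overline{k}$, since $\overline{k}$ is by definition the algebraic closure of $k$ inside $\mathbb{C}_{\infty}$. As $\zeta_A(\mathfrak{a};\boldsymbol{\epsilon})\in k_{\infty}\subset\mathbb{C}_{\infty}$, it therefore suffices to prove that $\zeta_A(\mathfrak{a};\boldsymbol{\epsilon})\notin\overline{k}$.

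First I would set $w:={\rm wt}(\mathfrak{a})$ and note $w\geq 1$ (indeed $w\geq {\rm dep}(\mathfrak{a})\geq 1$). By Theorem~\ref{nonvan} we have $\zeta_A(\mathfrak{a};\boldsymbol{\epsilon})\neq 0$, so the singleton $V:=\{\zeta_A(\mathfrak{a};\boldsymbol{\epsilon})\}$ is a $k$-linearly independent subset of $AZ_w$: it consists of a single nonzero element, which is a monomial of AMZVs of total weight $w$ (take $n=1$, $m_1=1$). Now apply Theorem~\ref{linindamz} with $l=1$, $w_1=w$ and $V_1=V$; it yields that the union $\{1\}\cup V=\{1,\zeta_A(\mathfrak{a};\boldsymbol{\epsilon})\}$ is linearly independent over $\overline{k}$.

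The conclusion is then immediate: if $\zeta_A(\mathfrak{a};\boldsymbol{\epsilon})$ were algebraic over $k$, it would equal some $\alpha\in\overline{k}$, giving the nontrivial $\overline{k}$-linear relation $\alpha\cdot 1-1\cdot\zeta_A(\mathfrak{a};\boldsymbol{\epsilon})=0$, which contradicts the $\overline{k}$-linear independence just established. Hence $\zeta_A(\mathfrak{a};\boldsymbol{\epsilon})$ is transcendental over $k$. Alternatively, the same fact can be read off Theorem~\ref{gonamz}(i): a nonzero weight-$w$ AMZV is a nonzero element of $\overline{\mathcal{AZ}}_w$ with $w\geq 1$, which by the direct-sum decomposition $\overline{\mathcal{AZ}}=\overline{k}\bigoplus_{w\in\mathbb{N}}\overline{\mathcal{AZ}}_w$ cannot lie in the degree-zero component $\overline{k}$.

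As for the main obstacle: at this point there is essentially none, all of the substance having been absorbed into Theorem~\ref{linindamz} (which in turn rests on the period interpretation of Theorem~\ref{perint}, the entireness statement of Lemma~\ref{psie}, and the ABP criterion of Theorem~\ref{abpcri}). The only care needed is the bookkeeping observation that a single nonzero AMZV already counts as a nonempty $k$-linearly independent subset of the relevant graded piece $AZ_w$, so that Theorem~\ref{linindamz} applies with $l=1$ and delivers the transcendence at once.
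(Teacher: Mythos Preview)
Your proof is correct and matches the paper's intent: the paper states the corollary as ``a direct consequence of Theorem~\ref{gonamz}'' without giving details, and what you have written is precisely the standard unpacking of that consequence (via either Theorem~\ref{linindamz} with $l=1$ or, equivalently, the weight-grading of Theorem~\ref{gonamz}(i)). There is nothing to add.
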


\section*{Appendix. Explicit sum-shuffle relations in a lower depth case}
We provide several steps to calculate an explicit sum-shuffle relation for $\zeta_A({\mathfrak a}; {\boldsymbol \epsilon})\zeta_A({\mathfrak b}; {\boldsymbol \lambda})$ with ${\mathfrak a}:=(a_1, a_2)\in\mathbb{N}^{2}$, ${\mathfrak b}:=(b_1)\in\mathbb{N}$, ${\boldsymbol \epsilon}:=(\epsilon_1, \epsilon_2)\in(\mathbb{F}_q^{\times})^{2}$ and ${\boldsymbol \lambda}:=(\lambda_1)\in(\mathbb{F}_q^{\times})$.
\begin{itemize} 
\item[(I)] Express $\zeta_A({\mathfrak a}; {\boldsymbol \epsilon})\zeta_A({\mathfrak b}; {\boldsymbol \lambda})$ in alternating power sums by using \eqref{extpowsum} and \eqref{chpes}:
\begin{align*}
	\zeta_A({\mathfrak a}; {\boldsymbol \epsilon})\zeta_A({\mathfrak b}; {\boldsymbol \lambda})&=\sum_{d\geq 0}S_d({\mathfrak a}; {\boldsymbol \epsilon})S_d({\mathfrak b}; {\boldsymbol \lambda})+\sum_{d>e\geq 0}S_d({\mathfrak a}; {\boldsymbol \epsilon})S_e({\mathfrak b}; {\boldsymbol \lambda})+\sum_{e>d\geq 0}S_e({\mathfrak b}; {\boldsymbol \lambda})S_d({\mathfrak a}; {\boldsymbol \epsilon})\\
&=\sum_{d\geq 0}S_d({\mathfrak a}; {\boldsymbol \epsilon})S_d({\mathfrak b}; {\boldsymbol \lambda})+\sum_{d>e\geq 0}S_d({\mathfrak a}; {\boldsymbol \epsilon})S_e({\mathfrak b}; {\boldsymbol \lambda})+\sum_{e\geq 0}S_e({\mathfrak b}, {\mathfrak a}; {\boldsymbol \lambda}, {\boldsymbol \epsilon}).
\end{align*}

\item[(II)] Express $S_d({\mathfrak a}; {\boldsymbol \epsilon})S_d({\mathfrak b}; {\boldsymbol \lambda})$ and $\sum_{d>e\geq 0}S_d({\mathfrak a}; {\boldsymbol \epsilon})S_e({\mathfrak b}; {\boldsymbol \lambda})$ as an $\mathbb{F}_p$-linear combination of $S_{d_1}(-;-)$ by using \eqref{extpowsum}, \eqref{lempowsh} and Remark \ref{hl1} (for $\Delta^{j_1}_{a_1, b_1}$, $\Delta^{j_2}_{j_1, a_2}$ and $\Delta^{j_3}_{a_2, b_1}$, see \eqref{deltadef}).

For $\sum_{d\geq 0}S_d({\mathfrak a}; {\boldsymbol \epsilon})S_d({\mathfrak b}; {\boldsymbol \lambda})$, we have the following:
\begin{align*}
&\sum_{d\geq 0}S_d({\mathfrak a}; {\boldsymbol \epsilon})S_d({\mathfrak b}; {\boldsymbol \lambda})=\sum_{d_1\geq 0}S_{d_1}({\mathfrak a}; {\boldsymbol \epsilon})S_{d_1}({\mathfrak b}; {\boldsymbol \lambda})=\sum_{d_1\geq 0}S_{d_1}(a_1;\epsilon_1)S_{<d_1}(a_2; \epsilon_2)S_{d_1}(b_1; \lambda_1)\\
&=\sum_{d_1\geq 0}S_{d_1}(a_1;\epsilon_1)S_{d_1}(b_1; \lambda_1)\sum_{d_1>d_2\geq 0}S_{d_2}(a_2; \epsilon_2)\\
&=\sum_{d_1\geq 0}\Bigl(\sum_{\substack{a_1+b_1>j_1>0\\q-1|j_1}}\Delta^{j_1}_{a_1, b_1}S_{d_1}(a_1+b_1-j_1, j_1; \epsilon_1\lambda_1, 1)\\
&{\hspace{5.2cm}}+S_{d_1}(a_1+b_1;\epsilon_1\lambda_1)\Bigr)\sum_{d_1>d_2\geq 0}S_{d_2}(a_2; \epsilon_2)\\
&=\sum_{d_1\geq 0}\biggl(\sum_{\substack{a_1+b_1>j_1>0\\q-1|j_1}}\Delta^{j_1}_{a_1, b_1}S_{d_1}(a_1+b_1-j_1; \epsilon_1\lambda_1)\sum_{d_1>d_2'\geq0}S_{d_2'}(j_1;1)\\
&{\hspace{5.2cm}}+S_{d_1}(a_1+b_1;\epsilon_1\lambda_1)\biggr)\sum_{d_1>d_2\geq 0}S_{d_2}(a_2; \epsilon_2)
\end{align*}
\begin{align*}
&=\sum_{d_1\geq 0}\biggl(\sum_{\substack{a_1+b_1>j_1>0\\q-1|j_1}}\Delta^{j_1}_{a_1, b_1}S_{d_1}(a_1+b_1-j_1; \epsilon_1\lambda_1)\biggl\{\sum_{d_1>d_2'=d_2\geq0}S_{d_2'}(j_1;1)S_{d_2}(a_2; \epsilon_2)\\
&{\hspace{1cm}}+\sum_{d_1>d_2'>d_2\geq0}S_{d_2'}(j_1;1)S_{d_2}(a_2; \epsilon_2)+\sum_{d_1>d_2>d_2'\geq0}S_{d_2}(a_2; \epsilon_2)S_{d_2'}(j_1;1)\biggr\}\\
&{\hspace{1cm}}+\sum_{d_1\geq 0}S_{d_1}(a_1+b_1, a_2;\epsilon_1\lambda_1, \epsilon_2)\biggr)\\
&=\sum_{d_1\geq 0}\biggl(\sum_{\substack{a_1+b_1>j_1>0\\q-1|j_1}}\Delta^{j_1}_{a_1, b_1}S_{d_1}(a_1+b_1-j_1; \epsilon_1\lambda_1)\\
&{\hspace{1cm}}\biggl\{\sum_{d_1>d_2'\geq0}\Bigl(\sum_{\substack{j_1+a_2>j_2>0\\q-1|j_2}}\Delta^{j_2}_{j_1, a_2}S_{d_2'}(j_1+a_2-j_2, j_2;\epsilon_2, 1)+S_{d_2'}(a_2+j_1;\epsilon_2)\Bigr)\\
&{\hspace{1cm}}+\sum_{d_1>d_2'\geq0}S_{d_2'}(j_1, a_2;1, \epsilon_2)+\sum_{d_1>d_2\geq0}S_{d_2}(a_2, j_1; \epsilon_2, 1)\biggr\}\\
&{\hspace{1cm}}+\sum_{d_1\geq 0}S_{d_1}(a_1+b_1, a_2;\epsilon_1\lambda_1, \epsilon_2)\biggr)\\
&=\sum_{\substack{a_1+b_1>j_1>0\\q-1|j_1}}\Delta^{j_1}_{a_1, b_1}\biggl(\sum_{\substack{j_1+a_2>j_2>0\\q-1|j_2}}\Delta^{j_2}_{j_1, a_2}\sum_{d_1\geq 0}S_{d_1}(a_1+b_1-j_1, j_1+a_2-j_2, j_2; \epsilon_1\lambda_1, \epsilon_2, 1)\\
&{\hspace{1cm}}+\sum_{d_1\geq 0}S_{d_1}(a_1+b_1-j_1, j_1+a_2;\epsilon_1\lambda_1, \epsilon_2)+\sum_{d_1\geq0}S_{d_1}(a_1+b_1-j_1, j_1, a_2;\epsilon_1\lambda_1, 1, \epsilon_2)\\
&{\hspace{1cm}}+\sum_{d_1\geq0}S_{d_1}(a_1+b_1-j_1, a_2, j_1;\epsilon_1\lambda_1, \epsilon_2, 1)\biggr)+\sum_{d_1\geq 0}S_{d_1}(a_1+b_1, a_2;\epsilon_1\lambda_1, \epsilon_2).
\end{align*}
For $\sum_{d>e\geq 0}S_d({\mathfrak a}; {\boldsymbol \epsilon})S_e({\mathfrak b}; {\boldsymbol \lambda})$, we have the following:
\begin{align*}
&\sum_{d>e\geq 0}S_d({\mathfrak a}; {\boldsymbol \epsilon})S_e({\mathfrak b}; {\boldsymbol \lambda})=\sum_{d_1>e_1\geq 0}S_{d_1}({\mathfrak a}; {\boldsymbol \epsilon})S_{e_1}({\mathfrak b}; {\boldsymbol \lambda})=\sum_{d_1>e_1\geq 0}S_{d_1}(a_1;\epsilon_1)S_{<d_1}(a_2; \epsilon_2)S_{e_1}(b_1; \lambda_1)\\
&=\sum_{d_1\geq 0}S_{d_1}(a_1;\epsilon_1)\sum_{d_1>d_2\geq 0}S_{d_2}(a_2; \epsilon_2)\sum_{d_1>e_1\geq 0}S_{e_1}(b_1; \lambda_1)\\
&=\sum_{d_1\geq 0}S_{d_1}(a_1;\epsilon_1)\biggl(\sum_{d_1>d_2=e_1\geq 0}S_{d_2}(a_2; \epsilon_2)S_{e_1}(b_1; \lambda_1)+\sum_{d_1>d_2>e_1\geq 0}S_{d_2}(a_2; \epsilon_2)S_{e_1}(b_1; \lambda_1)\\
&{\hspace{7.2cm} +\sum_{d_1>e_1>d_2\geq 0}S_{e_1}(b_1; \lambda_1)S_{d_2}(a_2; \epsilon_2)\biggr)}\\
&=\sum_{d_1\geq 0}S_{d_1}(a_1;\epsilon_1)\biggl(\sum_{d_1>d_2=e_1\geq 0}\biggl\{\sum_{\substack{a_2+b_1>j_3>0\\q-1|j_3}}\Delta^{j_3}_{a_2, b_1}S_{d_2}(a_2+b_1-j_3, j_3; \epsilon_2\lambda_1, 1)\\
&{\hspace{1cm}+S_{d_2}(a_2+b_1;\epsilon_2\lambda_1)\biggr\}+\sum_{d_1>d_2>\geq 0}S_{d_2}(a_2, b_1; \epsilon_2, \lambda_1)+\sum_{d_1>e_1>\geq 0}S_{e_1}(b_1, a_2; \lambda_1, \epsilon_2)\biggr)}
\end{align*}
\begin{align*}
&=\sum_{\substack{a_2+b_1>j_3>0\\q-1|j_3}}\Delta^{j_3}_{a_2, b_1}\sum_{d_1\geq 0}S_{d_1}(a_1, a_2+b_1-j_3, j_3;\epsilon_1, \epsilon_2\lambda_1, 1)+\sum_{d_1\geq 0}S_{d_1}(a_1, a_2+b_1;\epsilon_1, \epsilon_2\lambda_1)\\
&{\hspace{3.2cm} +\sum_{d_1\geq 0}S_{d_1}(a_1, a_2, b_1;\epsilon_1, \epsilon_2, \lambda_1)+\sum_{d_1\geq 0}S_{d_1}(a_1, b_1, a_2;\epsilon_1, \lambda_1, \epsilon_2)}.
\end{align*}
\item[(III)] Use step (II) and \eqref{chpes}. Then we obtain a sum-shuffle relation for $\zeta_A({\mathfrak a}; {\boldsymbol \epsilon})\zeta_A({\mathfrak b}; {\boldsymbol \lambda})$ with explicit coefficients.
 \begin{align*}
	&\zeta_A({\mathfrak a}; {\boldsymbol \epsilon})\zeta_A({\mathfrak b}; {\boldsymbol \lambda})=\sum_{d\geq 0}S_d({\mathfrak a}; {\boldsymbol \epsilon})S_d({\mathfrak b}; {\boldsymbol \lambda})+\sum_{d>e\geq 0}S_d({\mathfrak a}; {\boldsymbol \epsilon})S_e({\mathfrak b}; {\boldsymbol \lambda})+\sum_{e\geq 0}S_e({\mathfrak b}, {\mathfrak a}; {\boldsymbol \lambda}, {\boldsymbol \epsilon})
\end{align*}
\begin{align*}
&=\sum_{\substack{a_1+b_1>j_1>0\\q-1|j_1}}\Delta^{j_1}_{a_1, b_1}\biggl(\sum_{\substack{j_1+a_2>j_2>0\\q-1|j_2}}\Delta^{j_2}_{j_1, a_2}\zeta_A(a_1+b_1-j_1, j_1+a_2-j_2, j_2; \epsilon_1\lambda_1, \epsilon_2, 1)\\
&{\hspace{1cm}}+\zeta_A(a_1+b_1-j_1, j_1+a_2;\epsilon_1\lambda_1, \epsilon_2)+\zeta_A(a_1+b_1-j_1, j_1, a_2;\epsilon_1\lambda_1, 1, \epsilon_2)\\
&{\hspace{1cm}}+\zeta_A(a_1+b_1-j_1, a_2, j_1;\epsilon_1\lambda_1, \epsilon_2, 1)\biggr)+\zeta_A(a_1+b_1, a_2;\epsilon_1\lambda_1, \epsilon_2)\\
&\quad+\sum_{\substack{a_2+b_1>j_3>0\\q-1|j_3}}\Delta^{j_3}_{a_2, b_1}\zeta_A(a_1, a_2+b_1-j_3, j_3;\epsilon_1, \epsilon_2\lambda_1, 1)+\zeta_A(a_1, a_2+b_1;\epsilon_1, \epsilon_2\lambda_1)\\
&{\hspace{1cm}+\zeta_A(a_1, a_2, b_1;\epsilon_1, \epsilon_2, \lambda_1)+\zeta_A(a_1, b_1, a_2;\epsilon_1, \lambda_1, \epsilon_2)}\\
&\quad+\zeta_A({\mathfrak b}, {\mathfrak a}; {\boldsymbol \lambda}, {\boldsymbol \epsilon}).
\end{align*}
\end{itemize}

\section*{Acknowledgments}
The author is deeply grateful to Professor H. Furusho for giving him many fruitful comments on this paper and guiding him to this area. He also gratefully acknowledges Professor C.-Y. Chang for indicating the author towards this project by suggesting  definition of AMZVs and giving some of observations to him. This paper could not have been written without their continuous encouragements. He further would like to thank JSPS Overseas Challenge Program for Young Researchers and National Center for Theoretical Sciences in Hsinchu for their support during his stay at National Tsing Hua University. This work was also supported by JSPS KAKENHI Grant Number JP18J15278.

\end{document}